\newtheorem{theorem}{Theorem}[section]
\newtheorem{lemma}[theorem]{Lemma}
\newtheorem{proposition}[theorem]{Proposition}
\newtheorem{corollary}[theorem]{Corollary}
\newtheorem{definition}[theorem]{Definition}
\newtheorem{ex}[theorem]{Example}
\newenvironment{example}{\begin{ex} \rm}{\end{ex}}
\newtheorem{rmk}[theorem]{Remark}
\newenvironment{remark}{\begin{rmk} \rm}{\end{rmk}}
\newtheorem{ques}[theorem]{Question}
\newenvironment{question}{\begin{ques} \rm}{\end{ques}}
\renewcommand{\caption}{}
\numberwithin{equation}{section}
\def\&{\wedge}
\newcommand{\bb}{\mathbb}
\newcommand{\R}{\bb{R}}
\newcommand{\C}{\bb{C}}
\newcommand{\Z}{\bb{Z}}
\DeclareMathOperator{\id}{id}
\title{Adjacent Singularities, TQFTs, and Zariski's Multiplicity Conjecture}
\author{Shamuel Auyeung}
\begin{document}
	\maketitle
	
	\begin{abstract}
		We give a new proof of Zariski's multiplicity conjecture in the case of isolated hypersurface singularities; this was first proved by de Bobadilla-Pe\l ka \cite{BobadillaPelka}. Our proof uses the TQFT structure of fixed-point Floer cohomology and the fact that adjacent singularities produce symplectic cobordisms between the Milnor fibrations of the singularities. The key technical result is to construct a chain map on Floer cochains using the cobordism and as a last step, apply a spectral sequence of McLean \cite{McLeanLog}. This last step allows us to also recover a theorem of Varchenko \cite{varchenko}.
	\end{abstract}
	
	\begin{center}
	\textit{Dedicated to Philip and Marie Holtrop.}
	\end{center}
	
	\tableofcontents
		
	\section{Introduction}
	
	Let $f \in \C[z_1,...,z_{n+1}]$ be a polynomial such that $f(0) = 0$ and 0 is an isolated singularity. This means that all the first partial derivatives of $f$ with respect to the standard complex coordinates vanish at 0 and there exists a neighborhood of 0 in $V:= V(f) = f^{-1}(0)$ such that there are no singularities in the neighborhood other than 0. We will often refer to $f$ as the singularity though to be precise, we are really considering the germ of 0 in $f^{-1}(0)$. To such a singularity, we may assign the \textbf{multiplicity} $\nu(f)$ which is the largest integer such that $f \in \mathfrak{m}^{\nu(f)}$; here $\mathfrak{m}$ is the maximal ideal of $\C[z_1,...,z_{n+1}]$. We may also associate to the singularity its Milnor number $\mu(f):= \dim_\C \C[z_1,...,z_{n+1}]/\text{Jac}(f)$ where $\text{Jac}(f)$ is the Jacobian ideal of $f$.
	
We can be somewhat more general than our current set up and consider the formal power series ring instead of the polynomial ring. Under this more general set up, de Bobadilla-Pe\l ka \cite{BobadillaPelka} proved the following :

\begin{theorem}
Let $(f_t)$ be a continuous family of power series. If the Milnor number $\mu(f_t)$ is independent of $t$ and finite then the multiplicity $\nu(f_t)$ is also independent of $t$.
\end{theorem}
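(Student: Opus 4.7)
The plan is as follows. A continuous $\mu$-constant family $(f_t)$ can be partitioned along $[0,1]$ into finitely many small subintervals on which, by standard singularity-theoretic facts (finite determinacy coming from finiteness of $\mu$, together with the local structure of $\mu$-constant deformations), each pair of nearby parameter values is realized by isolated hypersurface singularities $f_s, f_t$ with one adjacent to the other. By the hypothesis recorded in the abstract, such an adjacency yields a symplectic cobordism $W_{s,t}$ between the Milnor fibrations. It therefore suffices to prove $\nu(f_s) = \nu(f_t)$ whenever $\mu(f_s) = \mu(f_t)$ is finite and such a cobordism exists between their Milnor fibrations.

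The heart of the argument, and what I expect to be the main obstacle, is to use the cobordism $W = W_{s,t}$ to construct a chain map $\Phi_k : CF^*(\phi_s^k) \to CF^*(\phi_t^k)$ on fixed-point Floer cochain complexes for every integer $k \geq 1$, where $\phi_s, \phi_t$ are the geometric monodromies of the Milnor fibrations viewed as symplectomorphisms. Following the TQFT formalism for fixed-point Floer theory, $\Phi_k$ should be defined by counting pseudoholomorphic sections of an appropriate $k$-fold mapping-torus bundle built from $W$ with prescribed asymptotics at the two ends. Making this precise requires careful choices of almost complex structures and Hamiltonian perturbations to achieve equivariant transversality, control of the noncompact ends arising from the open Milnor fibers, and the usual compactness and gluing package; verifying compatibility with the natural action filtration on $CF^*$ is essential for the final step.

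Once $\Phi_k$ is constructed, the assumption $\mu(f_s) = \mu(f_t)$ combined with a Lefschetz-number comparison, using the fact that $\mu$ equals the reduced Euler characteristic of the Milnor fiber up to sign, should force $\Phi_k$ to be a quasi-isomorphism for all $k$. By itself this already recovers Varchenko's theorem \cite{varchenko}, since the resulting isomorphisms $HF^*(\phi_s^k) \cong HF^*(\phi_t^k)$ for every $k$ determine the Lefschetz zeta function, and hence the characteristic polynomial, of the monodromy.

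To finish, I would invoke McLean's spectral sequence \cite{McLeanLog}, which for $k$ sufficiently divisible computes $HF^*(\phi_i^k)$ from log resolution data of $f_i$ and, crucially, encodes the multiplicity $\nu(f_i)$ as a numerical invariant extractable from the sequence, for instance as an asymptotic growth rate or a distinguished filtration step on the $E_\infty$-page. Because $\Phi_k$ identifies $HF^*(\phi_s^k)$ with $HF^*(\phi_t^k)$ in a manner compatible with the relevant filtration, the multiplicities extracted from the two spectral sequences must agree, yielding $\nu(f_s) = \nu(f_t)$ and hence the theorem.
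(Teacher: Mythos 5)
Your outline matches the paper's high-level architecture: reduce to adjacent pairs, build a chain map on $CF^*(\phi^k)$ from the cobordism via the TQFT formalism, show the map is an isomorphism on $HF^*$, then extract $\nu$ (and lct) from McLean's spectral sequence. The construction of the chain map you correctly flag as the technical heart, and the paper devotes Sections~\ref{map}--\ref{tqft} precisely to that (neck lengthening, action filtration, $J$-convexity and compactness, and independence from the open book via partial Lefschetz fibrations).

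The genuine gap is in the step that makes $\Phi_k$ a quasi-isomorphism. You assert that ``a Lefschetz-number comparison, using the fact that $\mu$ equals the reduced Euler characteristic of the Milnor fiber up to sign, should force $\Phi_k$ to be a quasi-isomorphism.'' This does not follow: equality of Euler characteristics of the source and target complexes says nothing about a particular chain map between them (for example, the zero map is always a chain map and is never a quasi-isomorphism between nonzero complexes), and it does not even imply that the individual cohomology groups have the same rank. The paper's mechanism is structurally different and relies on two further ingredients you have omitted. First, the action filtration realizes $\Phi_k$ (more precisely, the map $HF^*(\psi^k_g) \to HF^*(\phi^k_g)$) as part of an exact triangle (Lemma~\ref{triangle}) whose third term is $H^*(W,\partial W)$ for the cobordism $W = M_f\setminus M_g$; the map is an isomorphism exactly when that relative cohomology vanishes. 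Second, the vanishing $H^*(W,\partial W)=0$ is \emph{not} a consequence of Floer theory at all: it is supplied by the theorem of L\^e--Ramanujam (with Freedman's topological $h$-cobordism for $n=2$), which uses $\mu$-constancy to show the Milnor fibration's diffeo-/homeotype is constant, so $W$ is a product $[0,1]\times L_g$. Finally, a Picard--Lefschetz/Morsification argument is needed to identify the extended monodromy $\psi^k_g$ with $\phi^k_f$; $\mu$-constancy again enters to guarantee no extra critical values split off. Without L\^e--Ramanujam, or an equivalent topological input converting $\mu$-constancy into triviality of $W$, the Floer-theoretic package alone does not close the argument. (A side remark: the constancy of the characteristic polynomial of the monodromy in a $\mu$-constant family is a theorem of L\^e, not Varchenko; what the paper attributes to Varchenko is the constancy of the log canonical threshold, recovered here from McLean's lct formula in terms of gradings of $HF^*(\phi^k,+)$.)
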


As they explain, this gives a positive answer to Zariski's multiplicity conjecture (ZMC) for the case of families of isolated hypersurface (holomorphic) singularities; their paper outlines a history of the conjecture. Their proof begins with constructing an A'Campo space of ``radius zero'' monodromy, making use of ideas from tropical geometry. They then endow the space with a symplectic structure and extend a spectral sequence of McLean \cite{McLeanLog} in order to accommodate the zero radius monodromy. As a last step, a theorem of L\^e-Ramanujam \cite{le_ramanujam} guarantees that a piece of the $E^1$-page is in fact, trivial. This triviality ensures the constancy of the multiplicity. Though they don't state it, their proof should also recover a theorem of Varchenko \cite{varchenko} regarding log canonical thresholds.

The $E^1$ page of the McLean spectral sequence takes data of the exceptional divisors of a log resolution of the singularity and converges to a symplectic invariant known as fixed-point Floer cohomology $HF^*$. This Floer invariant is defined for the monodromy map associated to an isolated hypersurface singularity. The spectral sequence is the key tool McLean used to prove that the $HF^*$ of iterates of the monodromy recovers multiplicity and log canonical threshold.

Since de Bobadilla-Pe\l ka extend a spectral sequence related to symplectic geometry \cite{McLeanLog}, one may naturally ask 

\begin{question}
Is there a proof of ZMC that is more symplectic-geometric in flavor, perhaps one that takes advantage of the additional structures of $HF^*$? If so, the study of the additional structures may be a road into results beyond ZMC.
\end{question}

The purpose of this paper is to supply such a proof. We've elected to work with polynomials rather than power series mostly for the sake of simplifying exposition but so long as the Milnor numbers we work with are finite, the theorem we prove extends to holomorphic germs of singularities:

	\begin{theorem} \label{zar}
		Suppose a family of isolated hypersurface singularities where its members are adjacent all have the same Milnor number $\mu$. Then each member in the family also has the same multiplicity and log canonical threshold.
	\end{theorem}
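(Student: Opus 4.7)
The plan is to leverage the TQFT machinery of fixed-point Floer cohomology together with the symplectic cobordisms furnished by adjacency, reducing the comparison of multiplicities and log canonical thresholds to a comparison of McLean's spectral sequences across the family. Concretely, I would pick two adjacent members $f_0$ and $f_1$ of the family, build a symplectic cobordism $W$ between the Milnor fibrations whose concave and convex ends carry the monodromies $\phi_0$ and $\phi_1$, and then use $W$ to define a chain map $\Phi_N \colon CF^*(\phi_0^N) \to CF^*(\phi_1^N)$ for each iterate $N$. Since McLean's work identifies certain numerical invariants of $HF^*(\phi_i^N)$ (for $N$ in a suitable arithmetic progression) with the multiplicity $\nu(f_i)$ and the log canonical threshold, it suffices to show that $\Phi_N$ is sufficiently compatible with these invariants across the family.

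The first step is to package the adjacency data as a genuine symplectic cobordism. An adjacency $f_1 \leadsto f_0$ is realized by a one-parameter deformation in which the isolated singularity of $f_1$ breaks into that of $f_0$ together with auxiliary critical points that, under the Milnor number constancy hypothesis, must be absent. I would use this to exhibit $W$ as a Liouville cobordism between the Milnor fibers, with the two monodromies appearing as symplectomorphisms of the concave and convex boundaries after a standard completion. The second step, where most of the work lies, is to produce the chain map $\Phi_N$ by counting solutions to a parametrized Floer equation on a surface modeled on $W$, and to verify the TQFT axioms needed so that $\Phi_N$ descends to cohomology and behaves well under iteration.

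The third step exploits the Milnor number hypothesis. McLean's theorem provides an equality $\dim HF^*(\phi_i^N) = \mu(f_i) \cdot \text{(known factor)} + \text{(correction term)}$ for $N$ large and divisible, so the equality of Milnor numbers across the family forces the source and target of $\Phi_N$ to have the same rank. Combined with a gluing/composition argument comparing $\Phi_N$ with its reverse cobordism map (so that the composition is homotopic to the identity, or at least an isomorphism), this promotes $\Phi_N$ to a quasi-isomorphism. Finally, I would run McLean's spectral sequence on both sides and use the fact that $\Phi_N$ respects the action filtration induced by the cobordism; the resulting map of spectral sequences then transfers the numerical data which computes $\nu(f_i)$ and the log canonical threshold from one side to the other.

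The main obstacle is the second step: constructing $\Phi_N$ and verifying it is a chain map. Compactness for the relevant moduli spaces, transversality in the presence of the monodromy's fixed points at the ends, and equivariance under the $N$-fold iteration all require care. A secondary obstacle is the compatibility of $\Phi_N$ with the action filtration that drives McLean's spectral sequence; establishing this compatibility is what ultimately lets one conclude equality of multiplicities and log canonical thresholds rather than merely equality of total ranks.
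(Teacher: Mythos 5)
Your high-level strategy — package the adjacency as a symplectic cobordism, induce a chain map on $CF^*(\phi^N)$ via counts of Floer trajectories, and then feed the resulting isomorphism into McLean's spectral sequence to transfer $\nu$ and the log canonical threshold — is indeed the skeleton of the paper's proof. The first two steps (building the cobordism between the mapping tori and establishing compactness for the relevant moduli spaces so that $\Phi_N$ is a well-defined chain map) match the content of Sections \ref{adj}--\ref{cpt}. The difficulty is in your third step, where the argument that $\Phi_N$ is a quasi-isomorphism has two genuine gaps.

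First, the assertion that McLean's theorem yields an equality of the form $\dim HF^*(\phi_i^N) = \mu(f_i)\cdot(\text{known factor}) + (\text{correction})$ is not correct. What the Milnor number controls, via the A'Campo/McLean circle of results, is the \emph{Lefschetz number} $\Lambda(\phi^N)$, i.e.\ the Euler characteristic $\chi(HF^*(\phi^N,+))$ up to sign — a signed count, not a rank. Constancy of $\mu$ in a family therefore forces constancy of Euler characteristics of the $HF^*$'s, but it does not a priori force the total ranks to agree, so you cannot conclude that source and target of $\Phi_N$ have the same dimension. Second, the "reverse cobordism" composition argument does not go through: adjacency is asymmetric (the paper stresses that $[f_1]\to[f_0]$ does not generally give $[f_0]\to[f_1]$), so there is no reverse cobordism to glue, and no formal reason the composite should be homotopic to the identity.

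What actually closes the gap is a different mechanism. The paper compares two monodromies of the \emph{same} germ $g$ — the monodromy $\phi^k_g$ computed on a small Milnor fiber and the monodromy $\psi^k_g$ computed on an enlarged radius and ball — and proves an exact triangle (Lemma \ref{triangle}) relating $HF^*(\psi^k_g)$, $HF^*(\phi^k_g)$, and $H^*(W,\partial W)$, where $W$ is the collar piece of the enlarged Milnor fiber. This is exactly where the $\mu$-constancy hypothesis is used, but through a \emph{topological} input: the L\^e--Ramanujam theorem (h-cobordism, plus Freedman for $n=2$) shows that under $\mu$-constancy the collar $W$ is a trivial product $[0,1]\times L_g$, so $H^*(W,\partial W)=0$ and the triangle degenerates to an isomorphism $HF^*(\psi^k_g)\cong HF^*(\phi^k_g)$. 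A separate Picard--Lefschetz/Morsification argument identifies $HF^*(\psi^k_g)$ with $HF^*(\phi^k_f)$. Only then can one apply McLean's original spectral sequence to each side and read off equality of $\nu$ and the log canonical threshold. Without the L\^e--Ramanujam input and the exact triangle, your proposal's rank-counting and cobordism-reversal steps would fail; with them, the rest of your outline becomes essentially the paper's proof.
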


In broad strokes, our proof has some similarities with \cite{BobadillaPelka} in that we work with the original McLean spectral sequence and also use the result of  L\^e-Ramanujam. However, rather than extend the spectral sequence, we make use of the TQFT structure of fixed-point Floer cohomology; i.e. symplectic cobordisms induce maps on the level of $HF^*$. Moreover, when two singularities are adjacent, there are natural cobordisms between the mapping tori of iterated monodromies $\phi^k$ coming from the singularities. These are utilized for establishing an exact triangle as in Lemma \ref{triangle} and some Picard-Lefschetz theory plus the result of L\^e-Ramanujam show that two relevant fixed-point Floer cohomologies are isomorphic. Then, applying McLean's original spectral sequence, we obtain a positive answer to ZMC for isolated hypersurface singularities.

	\subsection{Organization}
	
	In Section \ref{alg}, we introduce the relevant algebraic invariants which fixed-point Floer cohomology $HF^*$ recovers. Section \ref{floer} is where the definition of $HF^*$ is reviewed. Secion \ref{adj} introduces the notion of adjacent singularities and constructs cobordisms between the mapping tori of monodromies arising from an adjacent pair. The cobordism induces a map which is the subject of Section \ref{map}. Sections \ref{ham} and \ref{cpt} give the needed notions for showing why the induced map is well-defined. Per usual in Floer theory, we prove a version of Gromov compactness via a neck lengthening argument (this is different from SFT neck stretching). Section \ref{tqft} gives further language of TQFTs and its main result shows that our induced map is dependent only on the link of the singularities and the canonical framing. In the process, we use so-called partially-defined Lefschetz fibrations which may be of independent interest. We do not claim any originality in using them but it does seem that they've only appeared in unpublished literature. Section \ref{sunflower} gives a symplectically flavored proof of ZMC and the last section is more colloquial, posing questions and examples of various phenomena. In particular, since the Floer maps induced by the cobordisms can be defined for adjacent singularities when $\mu$ is not constant, a natural follow-up question is how they might be used to detect changes in multiplicity within families of isolated hypersurface singularities. Another natural TQFT question is whether the products $HF^*(\phi^k) \otimes HF^*(\phi^\ell) \to HF^*(\phi^{k+\ell})$ or counits $HF^*(\phi^k) \to \Z$ shed light on the nature of singularities. The appendices address gradings and some examples of families of isolated singularities.
		
	\section{Some Algebraic Invariants and Open Book Decompositions} \label{alg}
	
	Let $f:\C^{n+1} \to \C$ be a polynomial with isolated singularity and let $\C[z_1,...,z_{n+1}]$ be the ring of formal power series in $n$ variables. For convenience, we often assume $f(0)=0$. The maximal ideal $\mathfrak{m}$ consists of all polynomials without a constant term. Here are three algebraic invariants one can define for the germ of the singularity $(f^{-1}(0),0)$; we've seen the latter two already. The first involves integration but in fact, can be rendered in a completely algebro-geometric way.
	
	\begin{definition}
		Let $\phi_{c,f}:\mathbb{C}^n \setminus \{0\} \to \mathbb{C}$ be $1/|f|^{2c}$. The \textbf{log canonical threshold} of $f^{-1}(0)$ is $\text{lct}(f^{-1}(0)) = \sup\{c >0:\phi_{c,f} \text{ is locally integrable} \}$. Other common notation is $\text{lct}(f)$.
	\end{definition}
	
	The next invariant involves differentiation instead of integration.
	
	\begin{definition}
		The \textbf{multiplicity} of $f$, denoted $\nu(f)$ is the largest positive integer such that $f \in \mathfrak{m}^{\nu(f)}$.	
	\end{definition}
	
	\begin{definition}
		The \textbf{Milnor number} of $f$, denoted $\mu(f)$, is the complex dimension of the algebra $\C[z_1,...,z_{n+1}]/\text{Jac}(f)$ where $\text{Jac}(f)$ is the Jacobian ideal of $f$. This ideal is generated by the first partial derivatives of $f$.
	\end{definition}
	
	Though the Milnor number defined in this way is completely algebraic, there is another notion which Milnor defined in \cite{Milnor}. In fact, there are several ways other ways to think of $\mu(f)$. For sufficiently small $\epsilon \gg \delta >0$, the \textbf{Milnor fiber} is $M_f:= f^{-1}(\delta) \cap B_\epsilon(0)$ where $B_\epsilon(0) \subset \C^{n+1}$ is a radius $\epsilon$ ball centered at 0. We now have the following theorem:
	
	\begin{theorem}\cite{Milnor}
		Let $f:(\C^{n+1},0) \to (\C,0)$ be a polynomial with isolated singularity at 0 and $\mu(f)$ be the Milnor number defined above. Then:
		\begin{enumerate}
			\item  The Poincar\'e-Hopf index of the vector field $\nabla f$ at 0 (under the standard metric of $\C^{n+1}$) is $\mu(f)$.	
			\item 	The homotopy type of the Milnor fiber $M_f$ is a bouquet of spheres $\bigvee^{\mu(f)} S^n$. In particular, the Betti number $b_n(M_f) = \mu(f)$.
			\item If $\tilde{f}$ is a Morsification of $f$, then the number of critical points of $\tilde{f}$ is $\mu(f)$.
		\end{enumerate}
	\end{theorem}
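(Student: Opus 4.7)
The plan is to anchor all three statements on a single algebraic-topological fact: for a holomorphic map germ $F = (F_1,\ldots,F_{n+1}): (\C^{n+1},0) \to (\C^{n+1},0)$ with an isolated zero at the origin, the local mapping degree of $F$ equals $\dim_\C \mathcal{O}_0 / (F_1,\ldots,F_{n+1})$. Applied to $F = \nabla f = (\partial_1 f,\ldots,\partial_{n+1} f)$—which has an isolated zero precisely because $f$ has an isolated singularity—this degree is $\mu(f)$ by definition.

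From here, statements (1) and (3) drop out quickly. For (1), I would identify the Poincar\'e-Hopf index of the real vector field $\nabla f$ at $0$ with the degree of the Gauss map $\nabla f / |\nabla f|: S^{2n+1}_\epsilon \to S^{2n+1}$, which agrees with the local mapping degree above and hence equals $\mu(f)$. For (3), I would use conservation of degree under deformation: perturbing $f$ to a Morsification $\tilde f$ replaces the single degenerate zero of $\nabla f$ by finitely many non-degenerate zeros of $\nabla \tilde f$ inside a small ball around $0$, whose indices sum to $\mu(f)$. Since $\tilde f$ is holomorphic, the real Jacobian of $\nabla \tilde f$ at each such zero has determinant $|\det \mathrm{Hess}_\C \tilde f|^2 > 0$, so each index is $+1$ and the count is exactly $\mu(f)$.

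For (2), I would proceed in two stages. First, I would establish Milnor's fibration theorem by showing that for $\epsilon \gg \delta > 0$ the restriction $f : f^{-1}(\partial D_\delta) \cap B_\epsilon(0) \to \partial D_\delta$ is a locally trivial smooth fibration; this uses the curve selection lemma to rule out critical points of $|f|^2$ on all sufficiently small spheres, followed by Ehresmann's lemma applied to the resulting proper submersion. Second, I would identify $M_f$ with a regular fiber of a Morsification $\tilde f$ and read off a handle decomposition from the Lefschetz thimble picture: each of the $\mu(f)$ Morse critical points of $\tilde f$ contributes exactly one $n$-handle (its vanishing thimble) attached to a trivial $0$-handle. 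Combining this with the $(n-1)$-connectedness of $M_f$—a consequence of Andreotti--Frankel, since $M_f$ is Stein of complex dimension $n$—pins the homotopy type down to $\bigvee^{\mu(f)} S^n$, and in particular $b_n(M_f)=\mu(f)$. The main obstacle is this last stage: the curve-selection-lemma estimates that secure the Milnor fibration are genuinely delicate, and one must also justify that the Milnor fiber of $f$ really is preserved under $f \leadsto \tilde f$, which requires a $\mu$-constant-type stability argument rather than bare transversality.
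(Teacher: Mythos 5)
The paper does not prove this statement; it merely cites Milnor's monograph, so there is no in-text argument to compare against. Your sketches of (1) and (3) follow the standard route (local degree of a finite holomorphic germ equals the colength of the ideal it generates, plus conservation of degree under perturbation) and are essentially right. One caveat on (1): if $\nabla f$ is taken literally as the metric gradient of $\mathrm{Re}(f)$, then in complex notation $\nabla f = (\overline{\partial_1 f},\dots,\overline{\partial_{n+1}f})$, and its Gauss map differs from that of the holomorphic map $\partial f$ by the degree-$(-1)^{n+1}$ conjugation on $S^{2n+1}$; Milnor therefore works with the local degree of $\partial f$ itself, and the ``Poincar\'e--Hopf index'' in the statement should be read that way.

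The genuine gap is in (2), where you invoke Andreotti--Frankel for the $(n-1)$-connectedness of $M_f$. That theorem gives only an upper bound: a Stein manifold of complex dimension $n$ has the homotopy type of a CW complex of real dimension at most $n$, hence $H_k(M_f)=0$ for $k>n$. It says nothing about $\pi_1$ or about $H_k$ for $k<n$; for instance $(\C^*)^2$ is Stein of complex dimension $2$ yet has $\pi_1\cong\Z^2$. The $(n-1)$-connectedness of the Milnor fiber is a separate nontrivial theorem (Milnor's Theorem 6.5), and there is a related slip in your thimble discussion: a Lefschetz thimble of the Morsified fibration is an $(n+1)$-cell in the total space $E=\tilde f^{-1}(\bar D_\delta)\cap\bar B_\epsilon$, attached along a vanishing sphere in the fiber; it is not an $n$-handle inside $M_f$, so ``one $n$-handle attached to a trivial $0$-handle'' is not what the thimble picture gives you directly. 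The route you are reaching for does exist and uses exactly the pieces you have: $E$ is contractible (it retracts to the cone $f^{-1}(0)\cap\bar B_\epsilon$) and is obtained from a regular fiber $M_f$ by attaching $\mu$ such $(n+1)$-cells; since attaching $(n+1)$-cells leaves $\pi_k$ unchanged for $k<n$, contractibility of $E$ forces $\pi_k(M_f)=0$ for $k<n$, and the long exact sequence of $(E,M_f)$ then gives $H_n(M_f)\cong\Z^{\mu}$, after which Hurewicz together with the Andreotti--Frankel dimension bound pins down the homotopy type. In short, the roles in your sketch are swapped: the thimble attachment, not Andreotti--Frankel, is what supplies the connectivity, and Andreotti--Frankel supplies only the dimension bound.
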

	
	Here, a Morsification of a function $f$ is a $C^\infty$-small perturbation so that the resulting $\tilde{f}$ has only Morse critical points (this will be relevant in the final steps for proving ZMC). This theorem shows that the algebraically defined invariant $\mu(f)$ is also a topological invariant since the Poincar\'e-Hopf index, middle Betti number, and Morsifications are all topological. In the introduction, we also described the result by Milnor concerning the fibration structure. We'll restate the result here:
	
	\begin{theorem}\cite{Milnor}
		Let $f:(\C^{n+1},0) \to (\C,0)$ be a polynomial with isolated singularity at 0. For a sufficiently small $\epsilon > 0$, let $L := S^{2n+1}_\epsilon \cap f^{-1}(0)$. Then the map $\frac{f}{|f|}:S^{2n+1}_\epsilon \setminus L \to S^1$ is a fibration whose fibers are diffeomorphic to the interior of $M_f$, previously defined.	
	\end{theorem}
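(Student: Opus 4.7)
The strategy is to prove that $\Phi := f/|f|$ is a proper submersion from $S^{2n+1}_\epsilon \setminus L$ onto $S^1$ and then invoke Ehresmann's fibration theorem; after that, a flow argument identifies the fibers with $\mathrm{int}(M_f)$. I would first dispose of the trivial aspects: $\Phi$ is smooth on its domain (since $|f| > 0$ there), and the map is surjective because $f$ takes values of every argument in any neighborhood of $0$ by the open mapping theorem applied near a regular value. The real content is the submersion step, and to make the submersion property useful one needs some uniformity allowing one to build a trivializing vector field.

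The main obstacle is showing that $\Phi$ has no critical points on $S^{2n+1}_\epsilon \setminus L$ for all sufficiently small $\epsilon > 0$. I would follow Milnor's original approach using the \emph{curve selection lemma} for real algebraic sets. Writing $\arg f = \mathrm{Im}(\log f)$, a critical point of $\Phi$ on the sphere is a point where $\nabla(\log f)$ is complex-orthogonal to the tangent space of the sphere, i.e.\ where $\nabla \log f$ is a complex multiple of the position vector $z$. Equivalently, $z$ and $\overline{\nabla f}/\overline{f}$ are $\C$-linearly dependent. If for every $\epsilon > 0$ there existed such a critical point inside $B_\epsilon$, then by the curve selection lemma applied to the appropriate real algebraic set, one could find a real-analytic arc $z(t)$ with $z(0)=0$ along which this dependence holds. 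A direct computation of $\tfrac{d}{dt}|f(z(t))|^2$ and $\tfrac{d}{dt}|z(t)|^2$ along such an arc yields a contradiction with the fact that $|f(z(t))| \to 0$ while $f \not\equiv 0$, essentially because $|f|$ would have to be constant along an arc heading into the origin. This is the step I expect to consume the most effort and where the isolated singularity hypothesis is truly used.

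With the submersion established, I would construct a smooth vector field $v$ on $S^{2n+1}_\epsilon \setminus L$ lifting $\partial_\theta$ on $S^1$; this is possible because $d\Phi$ is surjective at every point, and one takes $v$ perpendicular (in some auxiliary metric) to the fibers of $\Phi$, rescaled so that $d\Phi(v) = \partial_\theta$. Since $v$ is tangent to the compact manifold with boundary obtained by removing a small tubular neighborhood of $L$, its flow is complete on each such piece, and the flow over a full loop in $S^1$ furnishes a local trivialization. This is just the standard proof of Ehresmann's theorem in the presence of noncompact fibers, made legitimate by the properness after truncating near $L$.

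Finally, to identify the fiber $\Phi^{-1}(1) = \{z \in S^{2n+1}_\epsilon : f(z) \in \R_{>0}\}$ with $\mathrm{int}(M_f) = f^{-1}(\delta) \cap \mathrm{int}(B_\epsilon)$ for small $\delta$, I would use the gradient-like flow of $\log|f|$ on the ``tube'' $\{0 < |f| \le \delta\} \cap B_\epsilon$. Again using the curve selection lemma (or the Łojasiewicz inequality near $0$), one shows that along this flow trajectories stay inside $B_\epsilon$ and accumulate on $S^{2n+1}_\epsilon$ as $|f| \to 0^+$, yielding a diffeomorphism from $f^{-1}(\delta) \cap \mathrm{int}(B_\epsilon)$ onto $\Phi^{-1}(1)$ that preserves the argument of $f$ and hence intertwines with the $S^1$-family of fibers. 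This completes the identification and the proof.
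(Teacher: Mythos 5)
Your overall plan follows Milnor's original proof (which is exactly what the paper cites; it offers no independent argument), so at the level of strategy this is the right route: curve selection to rule out critical points of $\arg f$ on small spheres, Ehresmann to trivialize, and then a flow argument to identify the fiber with the interior of $M_f$. Two points need attention, one cosmetic and one substantive.

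The cosmetic one: the critical-point condition on $S_\epsilon$ is that $i\,\overline{\nabla \log f}$ be a \emph{real} multiple of the position vector $z$, not that $z$ and $\overline{\nabla f}/\overline f$ be merely $\C$-linearly dependent. Milnor's Lemma 4.4 does indeed pass through the weaker $\C$-dependence condition, but the punchline is that wherever such dependence holds near $0$ the proportionality constant has nonvanishing real part, which is what kills the (more restrictive) real condition you actually need. Your sketch gestures at this with the $\tfrac{d}{dt}|f|^2$ computation, so I read it as shorthand, but as written the reduction is backwards.

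The substantive gap is in the last step. You propose to flow by a ``gradient-like flow of $\log|f|$'' and claim trajectories ``accumulate on $S^{2n+1}_\epsilon$ as $|f|\to 0^+$.'' That is not what happens: in the set $\{0<|f|\le\delta\}\cap B_\epsilon$, letting $|f|\to 0^+$ drives you onto $f^{-1}(0)\cap B_\epsilon$, not onto the sphere; the only place where these two closures meet is the link $L$, which the fibers $\Phi^{-1}(e^{i\theta})$ avoid. Milnor's Lemma 5.9 constructs a vector field $v$ satisfying \emph{two} simultaneous conditions: $\langle v, \overline{\nabla\log f}\rangle$ is real and positive (so $\arg f$ is conserved and $|f|$ is monotone), \emph{and} $\mathrm{Re}\,\langle v, z\rangle>0$ (so $|z|^2$ is strictly increasing). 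Starting from a point of $\mathrm{int}(M_f)$ and flowing forward, the trajectory reaches $|z|=\epsilon$ in finite time while $|f|$ remains strictly positive, and the arrival point is the image in $F_\theta$; the regime $|f|\to 0$ is never entered. A bare gradient-like flow for $\log|f|$ has no reason to increase $|z|$, so without the radial control the trajectories could stay in the interior or hit $f^{-1}(0)$, and the Łojasiewicz inequality alone does not rescue this. You need to build the vector field with the extra radial positivity (which is where curve selection genuinely enters, via the same lemma used in the submersion step) before the flow argument can close.
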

	
	In fact, much more is true. We observe that $M_f$, by virtue of being holomorphically embedded in $\C^{n+1}$ is also exact symplectically embedded. The parallel transport map along the base circle actually gives a symplectomorphism $\phi:M_f \to M_f$ supported away from $\partial M_f \cong L$ which is called the \textbf{monodromy map}. Hence, we form the mapping torus from the interior of $M_f$: $T_\phi:=\mathring{M}_f \times [0,1]/\sim$ where $(x,0) \sim (\phi(x),1)$. If we only consider the smooth structures, we recover Milnor's fibration. This leads us to a definition:
	
	\begin{definition}
		An \textbf{open book decomposition} $(M,\pi,B)$ is the data of:
		\begin{itemize}
			\item a binding $B \subset M$ which is a codim 2 smooth submanifold;
			\item a fibration $\pi:M \setminus B \to S^1$.
		\end{itemize}
	\end{definition}
	
	Hence, this mapping torus fibration is an example of an \textbf{open book}. But we see that the essential data towards defining it was an open exact symplectic manifold $\mathring{M}$ with a compactly supported symplectomorphism $\phi:\mathring{M} \to \mathring{M}$ from which we can construct the mapping torus $T_\phi$. And the binding is $\partial M$. We could just use this data to define a mapping torus with $\partial M$ as the binding. Hence, the data $(M,\phi)$ is called an \textbf{abstract contact open book}. To be more precise, we don't actually need any symplectic structure; it makes sense to define abstract open books with just diffeomorphisms and smooth pages.
	
	\begin{example}
		Here is a classic and overdue example. Let $f=x^2+y^3$ and $g=x^2+y^2$. Then $\mathfrak{m}^2$ contains elements of degree at least 2: $x^2,y^2,xy,x^3,...$ and $\mathfrak{m}^3$ contains elements of degree at least 3. So the multiplicities are $\nu(f) = \nu(g) = 2$. It is clear that $\mu(f)=2$ and $\mu(g)=1$. Using some additional results from \cite{Milnor} for counting the number of boundary components, since the Milnor fibers are complex curves, we're able to deduce that $M_g$ is a genus 0 curve with two boundary components whereas $M_f$ is a genus 1 curve with one boundary component. The log canonical threshold of the singularity is $5/6$; a computation can be found in \cite{posII}.
	\end{example}

\subsection{Supporting Open Book Decompositions of Contact Manifolds}

	However, it turns out that when we take into account the symplectic structures, the total space of the mapping torus defined from an abstract open book admits a contact structure $\xi:=\ker \alpha$ (for some contact 1-form $\alpha$) via the Thurston-Winkelnkemper construction that is compatible with the open book structure. What this means is that $B$ is transverse to $\xi$ and the Reeb field of $\alpha$ is always transverse to the pages. This perspective begins with a symplectic discussion but we could also go the other way and begin with the contact form.
	
	\begin{definition}
		A \textbf{supporting open book decomposition} of a contact manifold $(Y,\xi)$ is
		\begin{itemize}
			\item $B \subset Y$, a codim 2 submanifold that is transverse to $\xi$;
			\item a fibration $Y \setminus B \to S^1$
			\item there is a Reeb vector field that is always transverse to the fibers (pages).
		\end{itemize}
	\end{definition}
	
	From this point of view which does not begin with symplectic structures, the idea is that, as much as possible, we want the pages to be tangent to the contact structure which means we get a symplectic form on the pages.
	
	Having introduced some objects from symplectic and contact geometry, we'll now introduce the main result of \cite{McLeanLog} which relates these with some of the algebraic invariants we saw earlier:
	
	\begin{theorem}
		Let $f,g:\C^{n+1} \to \C$ be polynomials with isolated singularities at 0. Then, if the links of $f$ and $g$ are (graded) contactomorphic, the multiplicity and log canonical threshold of $f$ and $g$ are equal.
	\end{theorem}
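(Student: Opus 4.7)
The plan is to route through fixed-point Floer cohomology of iterates of the symplectic monodromy and establish two independent facts: (a) the graded vector spaces $HF^*(\phi_f^k)$ depend only on the graded contactomorphism type of the link $L_f$, and (b) both $\nu(f)$ and $\text{lct}(f)$ can be read off from the family $\{HF^*(\phi_f^k)\}_{k\geq 1}$ together with its grading. If both hold, then a graded contactomorphism $L_f \cong L_g$ forces $HF^*(\phi_f^k) \cong HF^*(\phi_g^k)$ as graded groups for all $k$, and from this equality the desired equalities $\nu(f)=\nu(g)$ and $\text{lct}(f)=\text{lct}(g)$ follow.

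For (a), the link $(L_f,\xi_f)$ carries the canonical Milnor open book whose page is the Liouville domain $M_f$ and whose monodromy is the symplectic monodromy $\phi_f$. I would argue that a graded contactomorphism $L_f \to L_g$ can be massaged, via isotopy of bindings and compatible choice of page fibrations, into a symplectomorphism of pages conjugating $\phi_f$ to $\phi_g$ up to Hamiltonian isotopy; the grading data matches because the grading on $HF^*$ is controlled by the same Maslov/Conley--Zehnder information encoded in the graded contact structure. Since $HF^*(\phi^k)$ is invariant under symplectic conjugation up to Hamiltonian isotopy, this yields the desired isomorphism. The cleanest alternative is to define $HF^*(\phi^k)$ intrinsically from the contact side, via a Floer-theoretic count using the Reeb flow on a model neighborhood of the binding, so invariance becomes tautological; either route works, but needs a careful proof that the two definitions agree.

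For (b), one applies McLean's spectral sequence from a log resolution $\pi:Y\to \C^{n+1}$ of $V(f)$, with exceptional divisors $E_i$ carrying multiplicities $m_i$ and discrepancies $a_i$. The $E^1$ page is built from local Floer data on neighborhoods of strata in $\bigcup E_i$; tracking the gradings as a function of the iterate $k$ shows that the minimum of $(a_i+1)/m_i$, which by Varchenko's formula equals $\text{lct}(f)$, is detected as the first $k/d$ at which $HF^*(\phi^k)$ becomes nonzero in a specific grading slot, while $\nu(f)$ is extracted by an analogous grading analysis (one compares the appearance of particular classes to the multiplicity of the pullback divisor). The main obstacle will be step (a): it is not purely formal that $HF^*$ of the abstract open book's monodromy depends only on the contact link, and in dimensions above three there is no Giroux correspondence to lean on. I would invest the most effort in either establishing a uniqueness statement for the Milnor open book among supporting open books with Liouville pages, or in rewriting $HF^*(\phi^k)$ as a manifestly contact invariant object on the link. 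Once (a) is secured, the spectral sequence machinery of \cite{McLeanLog} delivers (b) with its grading analysis, and the theorem follows.
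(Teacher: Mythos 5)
The paper does not actually prove this theorem; it quotes it as Theorem 1.1 of \cite{McLeanLog} and then states the spectral sequence (and, later in Sections \ref{mult} and \ref{sunflower}, the recipes for reading $\nu$ and $\mathrm{lct}$ off of the graded groups $HF^*(\phi^k,+)$) as the machinery behind it. Your two-step decomposition --- (a) invariance of $HF^*(\phi^k,+)$ under graded contactomorphism of the link, and (b) extraction of $\nu$ and $\mathrm{lct}$ from the spectral sequence and the grading --- is exactly the structure of McLean's argument as the paper sketches it: step (a) is the paper's listed ``property 2'' of fixed-point Floer cohomology, and step (b) is the content of Section \ref{mult} (first $k$ with $HF^*(\phi^k,+)\neq 0$ gives $\nu$) and the displayed $\mathrm{lct}$ formula in Section \ref{sunflower}.

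You are right to flag (a) as the delicate step, and right that one should not lean on a Giroux-type uniqueness theorem for supporting open books in high dimensions. The way this is actually handled --- both in \cite{McLeanLog} and in this paper's Section \ref{tqft} --- is your ``cleanest alternative'': one does not try to pin down the open book up to isotopy, but instead proves directly that $HF^*(\phi^k,+)$ is an invariant of the \emph{graded contact pair} $(B\subset C,\xi,\Phi)$, i.e.\ of the ambient contact manifold together with the binding, its normal framing, and the grading data. The paper even supplies an independent argument for this via partial Lefschetz fibrations (the lemma in Section \ref{tqft} that $HF_*(Z,M)$ depends only on the binding and normal framing), which is in the same spirit as your proposed Reeb-flow reformulation. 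The only further point to be careful about, which your sketch glosses over, is that a graded contactomorphism of the links $L_f\to L_g$ must be promoted to a graded contactomorphism of the associated contact pairs (including the normal framings, which are the canonical ones from Lemma \ref{normal}); this is where the canonical trivialization $df$ of the normal bundle enters, and it is what lets property 2 apply. With that supplied, your outline is a correct reconstruction of the cited proof.
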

	
	The key technical tool is a spectral sequence relating the exceptional divisors of a log resolution and the fixed-point Floer cohomology of the monodromy which we reproduce here (with slight rephrasing) though we will not explain the technical aspects of it unless needed. See the paper for details.

\begin{theorem}[Theorem 1.2 of \cite{McLeanLog}]
Suppose that $\pi:Y \to \C^{n+1}$ is a multiplicity $m$ separating resolution for some $m \in \Z_+$ and $\check{S}$ an indexing set of the exceptional divisors. Let $(w_i)_{i \in \check{S}}$ be positive integers so that $-\sum_{i \in \check{S}} w_iE_i$ is ample. Let $a_i$ be the discrepancy of $E_i$, $m_i = \text{ord}_f(E_i)$, and $k_i:=m/m_i$ for all $i \in S_m$ which is the set of $i$ where $m_i$ divides $m$. Then there is a cohomological spectral sequence converging to the fixed-point Floer cohomology $HF^*(\phi^m,+)$ of the $m$th iterate of the monodromy $\phi$ with $E_1$ page
\[E^{p,q}_1= \bigoplus_{i \in S_m; \, k_i w_i=-p} H_{n-(p+q)-2k_i(a_i+1)} (\widetilde{E}^o_i;\Z)  \]

\noindent where $\widetilde{E}^o_i$ is a particular $m_i$-fold cover of $E^o_i:= E_i \setminus \bigcup_{j \neq i} E_j$.

\end{theorem}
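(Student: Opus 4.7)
The plan is to produce the spectral sequence from an action filtration on fixed-point Floer cochains for a model Hamiltonian adapted to the log resolution $\pi:Y \to \C^{n+1}$. First I would pull the Milnor fibration back via $\pi$ and, on each tubular neighborhood of an exceptional divisor $E_i^o$, choose a standard symplectic model in which the monodromy $\phi$ acts as rotation by $2\pi/m_i$ in the complex normal direction (forced by $m_i = \text{ord}_{E_i}\pi^{*}f$) and the symplectic form has size proportional to $w_i$. The ampleness of $-\sum_i w_i E_i$ is exactly what lets these local models be glued to a global symplectic form on $Y$, exhibiting the Milnor fiber as a Liouville domain naturally embedded in $Y \setminus \bigcup_i E_i$. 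The $\phi^m$-periodic orbits localized near $E_i^o$ then exist precisely when the normal rotation closes up, i.e.\ when $m_i \mid m$, so that $i \in S_m$; the resulting fixed locus is Morse-Bott and parametrized by the $m_i$-fold cyclic cover $\widetilde{E}_i^o \to E_i^o$, the cover encoding the monodromy of the normal framing around loops in $E_i^o$.

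A small Morse-Bott perturbation by a chosen Morse function on $\widetilde{E}_i^o$ then identifies the local contribution to the Floer cochain complex with the Morse cochain complex of $\widetilde{E}_i^o$, shifted by the Conley-Zehnder/Robbin-Salamon index of the rotating orbit. Computing this index against the canonical trivialization coming from $K_{\C^{n+1}}$ and invoking the adjunction $K_Y - \pi^{*}K_{\C^{n+1}} = \sum_i a_i E_i$ produces the degree shift $2k_i(a_i+1)$ appearing in the statement. The action filtration is then declared by placing orbits near $E_i$ at filtration level $p = -k_i w_i$ (matching the symplectic action, which up to a universal constant equals symplectic size $w_i$ times the covering degree $k_i$). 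Because the Floer differential respects this filtration, the associated graded decomposes as the direct sum of the local complexes computed above, giving exactly the stated $E_1$-page; boundedness of the filtration then yields convergence to $HF^{*}(\phi^{m},+)$.

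The main obstacle is analytic compactness of the relevant Floer moduli spaces on the noncompact target $Y \setminus \bigcup_i E_i$ in the presence of a Hamiltonian that degenerates along $\bigcup_i E_i$: Floer cylinders could in principle escape either to the Liouville end or into the divisorial neighborhoods beyond their nominal symplectic size. Both failure modes are controlled by a maximum-principle argument using that $-\sum_i w_i E_i$ defines a strictly plurisubharmonic function on $Y$, which is precisely where strict ampleness (rather than mere nefness) is essential. A secondary but nontrivial concern is independence from the many auxiliary choices (Morse functions on the $\widetilde{E}_i^o$, compatible almost complex structures on $Y$, precise Hamiltonian profiles near each $E_i$); this is standard but laborious, handled by continuation maps between different admissible setups, together with a check that continuations can be arranged to preserve the action filtration and hence induce a map of spectral sequences.
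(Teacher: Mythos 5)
The paper does not prove this theorem; it is Theorem~1.2 of McLean's paper \cite{McLeanLog} and the text explicitly defers to that reference for the argument, so your sketch is being compared against McLean's original proof rather than anything internal to the present paper.

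As a reconstruction of McLean's argument, your outline captures the right ingredients: local models near each $E_i^o$ in which $\phi$ rotates the normal fibre by $2\pi/m_i$, so that $\phi^m$ has a Morse--Bott fixed family near $E_i$ precisely when $m_i\mid m$, identified with the $m_i$-fold cover $\widetilde E^o_i$ tracking the normal-framing monodromy; an index computation via $K_Y=\pi^*K_{\C^{n+1}}+\sum_i a_iE_i$ that produces the shift $2k_i(a_i+1)$; an action filtration governed by $k_iw_i$, with the ample class $-\sum_iw_iE_i$ providing the needed positivity; and a spectral sequence from that filtration converging to $HF^*(\phi^m,+)$.

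Two points are worth flagging. First, McLean does not further perturb each family by a Morse function on $\widetilde E^o_i$; he keeps the Morse--Bott families (the ``codimension~0 families of fixed points'') and invokes the action-filtered Morse--Bott spectral sequence recorded as property~(3) in Section~\ref{floer}, which already places $H_{*}(B_i;\Z)$ on the $E_1$ page, so no additional Morse-theoretic cascade step is needed. Second, the Floer theory is carried out on the compact Milnor fibre, not on the noncompact $Y\setminus\bigcup_iE_i$; the resolution $Y$ is auxiliary data used to build a preferred small positive slope perturbation $\check\phi^m$ whose fixed set, actions, and indices are as advertised, and verifying that construction---rather than an escape-to-infinity compactness issue on an open target---is the central technical burden. (The paper also records, at the end of Section~\ref{mult}, that one input to this theorem, the existence of multiplicity-$m$ separating resolutions, required a correction later supplied in \cite{contactloci}.)
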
	
	
\noindent The spectral sequence can be viewed as a categorification of a result of \cite{acampo} who proved the the classical Lefschetz number $\Lambda(\phi^m) = \sum_{i \in S_m} m_i \chi(E^0_i)$ for all $m > 0$. That is, $\Lambda(\phi^m)$ can be computed from the data of \textit{any} log resolution of the singularity. McLean's result is a categorification in the sense that this numerical statement is lifted to the level of homological algebra (when we view a spectral sequence as implied by a filtered complex) and recovered when we take the Euler characteristic. This is because the Euler characteristic of a spectral sequence equals the Euler characteristic of what it converges to and $\chi(HF^*(\phi^m,+) = \Lambda(\phi^m)$. Given that this Floer invariant gives us a way to break into the algebro-geometric setting, we now give a quick recap of what the invariant is and its properties.
	
	\section{Review of Fixed-Point Floer Cohomology} \label{floer}
	
	We follow Mclean \cite{McLeanLog} but the original definition can also be found in \cite{SeidelMut}. Let $(M, \theta)$ be a Liouville domain. An almost complex structure $J$ on $M$
	is \textbf{cylindrical} near $\partial M$ if it is compatible with the symplectic form $d\theta$ and if $dr \circ J = -\alpha$ near $\partial M$ inside the standard collar neighborhood $(0, 1] \times \partial M$ where $\alpha= \theta|_{\partial M}$.
	
	We will consider compactly supported exact symplectomorphisms $\phi: M \to M$ because then fixed point Floer cohomology will have finite rank. Such a $\phi$ is \textbf{non-degenerate} if for every fixed point $p$ of $\phi$ the linearization of $\phi$ at $p$ does not have 1 as an eigenvalue. It has \textbf{small positive slope} if it is equal to the time-1 Hamiltonian flow of $\delta r$ near $\partial M$ where $\delta > 0$ is smaller than the period of the smallest periodic Reeb orbit of $\alpha$. This means that it corresponds to the time $\delta$ Reeb flow near $\partial M$. If $\phi$ is an exact symplectomorphism, then a small positive slope perturbation $\check{\phi}$ of $\phi$ is an exact symplectomorphism $\check{\phi}$ equal to the composition of $\phi$ with a $C^\infty$-small Hamiltonian symplectomorphism of small positive slope. The \textbf{action} of a fixed point $p$ is $-F_\phi(p)$ where $F_\phi$ is a function satisfying $\phi^* \theta = \theta + dF_\phi$. The action depends on a choice of $F_\phi$ which has to be fixed when $\phi$ is defined although usually $F_\phi$ is chosen so that it is zero near $\partial M$ (if possible). All of the symplectomorphisms coming from isolated hypersurface singularities will have such a unique $F_\phi$. An isolated family of fixed points is a path connected compact subset $B \subset M$ consisting of fixed points of $\phi$ of the same action and for which there is a neighborhood $N \supset B$ where $N \setminus B$ has no fixed points. Such an isolated family of fixed points is called a \textbf{codimension 0 family of fixed points} if in addition there is an autonomous Hamiltonian $H: N \to (-\infty, 0]$ so that $H^{-1}(0) = B$ is a connected codimension 0 submanifold with boundary and corners, the time $t$ flow of $X_H$ is well-defined for all $t \in \R$ and $\phi|_N: N \to N$ is equal to the time 1 flow of H. The action of an isolated family of fixed points $B \subset M$ is the action of any point $p \in B$.
	
	Let $(M, \theta, \phi)$ be an abstract contact open book. Let $(J_t)_{t\in [0,1]}$ be a smooth
	family of almost complex structures with the property that $\phi^*J_0 = J_1$. A Floer trajectory of $(\phi, J_t)_{t\in [0,1]})$ joining $p_-, p_+ \in M$ is a smooth map $u:\R \times [0,1] \to M$ so that $\partial_s u + J_t \partial_t u = 0$ where $(s,t)$ parameterizes $\R \times \R/\Z$, $u(s,0) = \phi(u(s,1))$ and so that $\lim_{s\to \pm \infty} u(s,t) = p_\pm$ for all $t \in [0,1]$. We write $\mathcal{M}(\phi,J_t, p_-, p_+)$ for the set of such Floer trajectories and define $\overline{\mathcal{M}}(\phi,J_t, p_-, p_+) := \mathcal{M}(\phi,J_t, p_-, p_+)/\R$, where $\R$ acts by translation in the $s$ coordinate.
	
	In order to properly define the fixed point Floer cohomology of an exact symplectomorphism, we need to discuss gradings. However, we move that discussion to the appendix.
	
	Let $(M, \theta, \phi)$ be a graded abstract contact open book and let $\check{\phi}$ be a small positive slope perturbation of $\phi$. This can be done so that $\check{\phi}$ is $C^\infty$-close to $\phi$ and so that the fixed points of $\check{\phi}$ are non-degenerate. We can also ensure that $\check{\phi}$ is a graded symplectomorphism due to the fact that it is isotopic to $\phi$ through symplectomorphisms. We now choose a $C^\infty$-generic family of cylindrical almost complex structures $(J_t)_{t\in[0,1]}$ satisfying $\phi^*J_0 = J_1$. The genericity property then tells us that $\overline{\mathcal{M}}(\phi,J_t, p_-, p_+)$ is a compact
	oriented manifold of dimension 0 for all fixed points $p_-, p_+$ of $\phi$ satisfying $CZ(p_-)-CZ(p_+) = +1$. 
	
	We define $\#^\pm \overline{\mathcal{M}}(\phi,J_t, p_-, p_+)$ to be the signed count of elements
	of $\overline{\mathcal{M}}(\phi,J_t, p_-, p_+)$. Let $CF^*(\check{\phi})$ be the free abelian group generated by fixed points of $\phi$ and graded by the Conley-Zehnder index taken with \textbf{negative} sign. The differential $\partial$ on $CF^*(\check{\phi})$ is a $\Z$-linear map satisfying $\partial (p_+) = \sum_{p_-} \#^\pm \overline{\mathcal{M}}(\phi,J_t, p_-, p_+) \cdot p_-$ for all fixed points $p_+$ of $\check{\phi}$ where the sum is over all fixed points $p_-$ satisfying $(-CZ(p_-))-(-CZ(p_+)) =+1$. Since we use $-F_\phi(p)$ for the action, the differential takes fixed points of lower action to fixed points of higher action.
	
	Because $(J_t)_{t\in [0,1]}$ is $C^\infty$-generic, one can show that $\partial^2 = 0$ and we define the resulting homology group to be $HF^*(\phi,+):= HF^*(\check{\phi},(J_t)_{t \in[0,1]})$. The notation is justified because this does not depend on the choice of perturbation $\check{\phi}$ nor on the choice of almost complex structure $(J_t)$. These conventions then tell us that if $\phi: M \to M$ is the identity map with the trivial grading and $\dim M = n$ then $HF^i(\phi, +) = H^{n+i}(M,\Z)$.
	
	Having defined fixed-point Floer cohomology, we list some of their properties which can be found in \cite{McLeanLog}.
	
	\begin{enumerate}
		\item For a graded abstract contact open book $(M, \lambda, \phi)$, the Lefschetz number $\Lambda(\phi)$ is equal to the Euler characteristic of $HF^*(\phi, +)$ multiplied by $(-1)^n$ where $n=\frac{1}{2}\dim M$.
		
		\item If $(M_j,\lambda_j,\phi_j)$, $j=0,1$ are graded abstract contact open books so
		that the graded contact pairs associated to them are graded contactomorphic, then $HF^*(\phi_0, +) \cong HF^*(\phi_1, +)$
		
		\item Let $(M,\lambda,\phi)$ be a graded abstract contact open book where $\dim M = 2n$. Suppose that the set of fixed points of a small positive slope perturbation $\check{\phi}$ of $\phi$ is a disjoint union of codimension 0 families of fixed points $B_1,...,B_\ell$ and $\iota:\{1,...,\ell\} \to \mathbb{N}$ is a function where $\iota(i) = \iota(j)$ if and only if the actions of $B_i$ and $B_j$ equal and $\iota(i) < \iota(j)$ if the action of $B_i$ is less than that of $B_j$. Then there is a cohomological spectra sequence converging to $HF^*(\phi,+)$ whose $E_1$ page is equal to
		
		\[E^{p,q}_1 = \bigoplus_{\iota(i)=p} H_{n-(p+q)-CZ(\phi,B_i)} (B_p,\Z). \]
	\end{enumerate}		
	
	The last property is not one that we'll use directly but it's important for McLean's result. Here are some brief comments. We can make $\iota$ easily using the actions on the $B_i$ and this gives a filtration on a chain complex built out of the fixed points. It's a general fact that whenever we have a filtered complex, we can obtain from it a spectral sequence.

\section{When $k=\nu$ is the Multiplicity} \label{mult}
	
In the previous two sections, we've discussed algebraic invariants of singularities and also fixed-point Floer cohomology. Here, we briefly mention some relationships between them since our main result is to use this Floer invariant to prove a version of ZMC concerned with the multiplicity of singularities. For convenience in writing grading shifts in a moment, suppose $f:\mathbb{C}^{n+1} \to \mathbb{C}$. If the multiplicity is $\nu$, then the McLean spectral sequence for $\phi^\nu$ degenerates at the $E^1$ page; i.e. $E^1 = E^\infty$. This is because for any log resolution, there is one divisor whose order of vanishing is the multiplicity and the order of vanishing of all the other divisors are larger. Since those do not divide $\nu$, they do not appear in the $E^1$ page and so there is only one entry in the $E^1$ page which means the spectral sequence degenerates immediately. Moreover, a $k$ separating resolution will give us trivial input data for the $E^1$ page when $k < \nu$. In this way, McLean's spectral sequence gives us a way to determine the multiplicity $\nu$ from fixed-point Floer cohomology; just look for the first nontrivial $HF^*(\phi^k,+)$.
	
It is shown in \cite{contactloci}, Prop 1.6, that if we write $f = f_\nu + f_{\nu+1}+...$ where each term $f_d$ is a homogeneous polynomial of degree $d$, then $HF^*(\phi^\nu) \cong H_c^{*+2nd+n-1}(\chi_\nu(f)) \cong H_c^*(F)$. Here, $\chi_\nu(f)$ is the $\nu$-th contact loci and $F$ is the Milnor fiber of $f_\nu$. This is also described on slide 14 of McLean's slides \cite{McLeanArc}.  \\

\noindent \textbf{Remark:} The authors of \cite{contactloci} prove the existence of a $k$-separating resolution because McLean's proof contains a mistake.
		
\section{Adjacent Singularities and a Cobordism} \label{adj}

%an adjacency preserves the multiplicity $m$, then the induced map on the $E^1$ pages is an isomorphism and also the induced map for $HF(\phi^m,+)$ and each is isomorphic to the Milnor fiber of the lowest homogeneous parts. For example, with $x^2+\epsilon y^2 + y^3$, the Milnor fiber of the lowest homogeneous part when $\epsilon = 0$, is just two points since we're looking at $x^2 = 1$. This is the double cover of a single point which is homotopy equivalent to $\mathbb{P}^1 \setminus 1 \, pt$. For $\epsilon > 0$, the Milnor fiber is $x^2+y^2 = 1$ which is $\mathbb{C}^* = \mathbb{P}^1 \setminus 2 \, pts$. The homology of 2 points and $\mathbb{C}^*$ are isomorphic as vector spaces but not as graded vector spaces.

\begin{definition}
	Two isolated hypersurface singularities are \textbf{adjacent} if they come from a family of polynomials $f_t:\C^{n+1} \to \C$, $t \in [0,1]$ where $f_0$ gives one of the singularities and $f_t$ for $t \neq 0$ gives the other. The notation is $[f_1] \to [f_0]$. Alternatively, there exists an arbitrarily small perturbation $p$ such that $[f_0 + p] = [f_1]$.
\end{definition}
	
Note that the relationship is not symmetric in the sense that there might not be a family where $f_1$ gives the central fiber and $f_0$ gives the rest. There is a more general definition for non-hypersurface singularities but this will suffice for us.
	
	\begin{example}
		Let $f = x^2 + y^3$ and $g = x^2+y^2(1+y)$. The singularity type of $g$ is $A_1$ and near 0, the singularity is given by $xy = 0$ and the Milnor fiber is isomorphic to $\C^*$ where the boundary is the Hopf link. It has Milnor number 1 and multiplicity 2. The singularity type of $f$ is $A_2$ and the Milnor fiber is a once-punctured torus with boundary being the trefoil. It's Milnor number is 2 and its multiplicity is 2. Figure 1 shows the embedding of $M_g$ into $M_f$.
		
		\begin{center}
			\tikzset{every picture/.style={line width=0.75pt}} %set default line width to 0.75pt        
			\begin{tikzpicture}[x=0.75pt,y=0.75pt,yscale=-1,xscale=1]
				%uncomment if require: \path (0,300); %set diagram left start at 0, and has height of 300
				
				%Curve Lines [id:da8330148229554024] 
				\draw    (234.33,67.5) .. controls (308.33,66.5) and (288.33,109.5) .. (332.33,109.5) ;
				%Curve Lines [id:da10476641667971154] 
				\draw    (140.33,144.5) .. controls (146.33,174.5) and (188.33,176.5) .. (234.33,176.5) ;
				%Curve Lines [id:da5120597559308806] 
				\draw    (140.33,98.5) .. controls (146.33,68.5) and (187.33,66.5) .. (234.33,67.5) ;
				%Curve Lines [id:da47838755731856164] 
				\draw    (140.33,144.5) .. controls (136.33,134.5) and (137.33,110.5) .. (140.33,98.5) ;
				%Curve Lines [id:da12550014911949714] 
				\draw    (188.33,125.5) .. controls (202.33,145.5) and (229.33,144.5) .. (244.33,124.5) ;
				%Curve Lines [id:da4105487135953434] 
				\draw    (194,128) .. controls (202.33,109.5) and (227.33,110.5) .. (237.33,127.5) ;
				%Shape: Ellipse [id:dp4019603959685496] 
				\draw   (325.83,127) .. controls (325.83,117.34) and (328.74,109.5) .. (332.33,109.5) .. controls (335.92,109.5) and (338.83,117.34) .. (338.83,127) .. controls (338.83,136.66) and (335.92,144.5) .. (332.33,144.5) .. controls (328.74,144.5) and (325.83,136.66) .. (325.83,127) -- cycle ;
				%Curve Lines [id:da7660791007758514] 
				\draw    (234.33,176.5) .. controls (308.33,176.5) and (289.33,143.5) .. (332.33,144.5) ;
				%Shape: Ellipse [id:dp1264202171445128] 
				\draw  [dash pattern={on 0.84pt off 2.51pt}] (206.33,90.5) .. controls (206.33,77.8) and (209.92,67.5) .. (214.33,67.5) .. controls (218.75,67.5) and (222.33,77.8) .. (222.33,90.5) .. controls (222.33,103.2) and (218.75,113.5) .. (214.33,113.5) .. controls (209.92,113.5) and (206.33,103.2) .. (206.33,90.5) -- cycle ;
				%Shape: Ellipse [id:dp8973638996047228] 
				\draw  [dash pattern={on 0.84pt off 2.51pt}] (207.33,158.5) .. controls (207.33,148.56) and (210.69,140.5) .. (214.83,140.5) .. controls (218.98,140.5) and (222.33,148.56) .. (222.33,158.5) .. controls (222.33,168.44) and (218.98,176.5) .. (214.83,176.5) .. controls (210.69,176.5) and (207.33,168.44) .. (207.33,158.5) -- cycle ;
				% Text Node
				\draw (156,113.4) node [anchor=north west][inner sep=0.75pt]    {$M_{g}$};
				% Text Node
				\draw (266,113.4) node [anchor=north west][inner sep=0.75pt]    {$M_{f}$};
			\end{tikzpicture}
			
			\caption{Figure 1: $M_g$ embeds into $M_f$}
		\end{center}
	\end{example}
	
	\begin{example}
		Let $f_t = x^2(x+t) + y^2(y^2+t)$. Observe that when $t=0$, we have $x^3+y^4$ which defines a $E_6$ singularity. When $t \neq 0$, then the singularity at $(0,0)$ is of $A_1$ type. We have that $\mu(f_0)=6,\nu(f_0)=3$ and the Milnor fiber of $f_0$ has Euler characteristic -5 with boundary being the (connected) torus knot $T(4,3)$ (also called $8_{19}$). Since $\chi = 2-2g-b$ where $b$ is the number of boundary components, we find that the genus is 3.
		
		We can perform four blowups to obtain a log resolution and in the process, look at the different chars to get the multiplicities. For example, in one of the charts at the end, the equation is $a^6 b^2(a^6 b + a^6 b^2 + a^2 t + t)=0$. When $t=0$, this simplifies to $a^{12}b^3(1+b)$. Below is a picture of the normal crossings divisor for the two singularities, labeled with their order of vanishing. The lowest order of vanishing is the multiplicity.
		
		\begin{center}
			\tikzset{every picture/.style={line width=0.75pt}} %set default line width to 0.75pt        
			
			\begin{tikzpicture}[x=0.75pt,y=0.75pt,yscale=-1,xscale=1]
				%uncomment if require: \path (0,300); %set diagram left start at 0, and has height of 300
				
				%Straight Lines [id:da6370846299441582] 
				\draw    (200,80) -- (200,180) ;
				%Straight Lines [id:da21607580686606154] 
				\draw    (160,100) -- (240,100) ;
				%Straight Lines [id:da9617709766732168] 
				\draw    (160,160) -- (240,160) ;
				%Straight Lines [id:da9788364879343552] 
				\draw    (160,130) -- (240,130) ;
				%Straight Lines [id:da5822673242014607] 
				\draw    (320,80) -- (320,180) ;
				%Straight Lines [id:da2848624596432199] 
				\draw    (280,100) -- (360,100) ;
				%Straight Lines [id:da2534125583407305] 
				\draw    (280,160) -- (360,160) ;
				%Straight Lines [id:da3873272440778157] 
				\draw    (280,130) -- (360,130) ;
				
				% Text Node
				\draw (141,92.4) node [anchor=north west][inner sep=0.75pt]    {$6$};
				% Text Node
				\draw (141,122.4) node [anchor=north west][inner sep=0.75pt]    {$4$};
				% Text Node
				\draw (141,152.4) node [anchor=north west][inner sep=0.75pt]    {$3$};
				% Text Node
				\draw (190,182.4) node [anchor=north west][inner sep=0.75pt]    {$12$};
				% Text Node
				\draw (367,92.4) node [anchor=north west][inner sep=0.75pt]    {$4$};
				% Text Node
				\draw (367,122.4) node [anchor=north west][inner sep=0.75pt]    {$2$};
				% Text Node
				\draw (367,152.4) node [anchor=north west][inner sep=0.75pt]    {$2$};
				% Text Node
				\draw (317,182.4) node [anchor=north west][inner sep=0.75pt]    {$6$};
				% Text Node
				\draw (191,50.4) node [anchor=north west][inner sep=0.75pt]    {$E_{6}$};
				% Text Node
				\draw (311,50.4) node [anchor=north west][inner sep=0.75pt]    {$A_{1}$};
			\end{tikzpicture}
		\end{center}				
	\end{example}
	
	\begin{remark}
		These two examples feature singularities of ADE type (also called simple singularities, Kleinian singularities, or du Val singularities) which have been studied extensively. They can be obtained by taking a finite subgroup $G \subset SL(2,\C)$ and quotienting to get $\C^2/G = \text{Spec}\, \C[x,y]^G$. Here, $\C[x,y]^G$ are the polynomials invariant under $G$ action. For instance, $A_1$ arises from $G= \Z/2 = \{\pm \id \}$ and we note that $x^2,xy,y^2$ are all invariant under this action. We also note that the $\Z/2$ action $(x,y) \mapsto (y,x)$ does not give a subgroup of $SL(2,\C)$ but of $GL(2,\C)$ and in this case, $\C^2/G \cong \C^2$ which shows that the analytic type does not remember information of the group $G$ when the action is in $GL(2,\C)$. 
	\end{remark}
	
	\subsection{Examples: ADE Singularities}
	
	The first purpose of this section is meant to provide the reader with some examples in order to give later sections some concreteness. It was proved by Arnold that among singularities of ADE type, two are adjacent if and only if the Dynkin diagram of one embeds into the Dynkin diagram of the other. In the case of complex surface singularities, this result can also be recovered sympletically and was probably known to Arnold. We outline a different proof strategy in order to fulfill a second purpose: highlight a few important results in the literature. Firstly, Castelnuovo proved the uniqueness of minimal resolutions of complex surface singularities; i.e. resolutions where all $(-1)$-curves have been contracted. We then use the results of Brieskorn \cite{brieskorn} and Ohta-Ono \cite{OhtaOno}:
	
	\begin{theorem} (Brieskorn)
		The minimal resolution of an isolated complex surface singularity and its Milnor fiber are diffeomorphic as a consequence of existence of simultaneous resolution.
	\end{theorem}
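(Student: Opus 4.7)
The plan is to combine Brieskorn's simultaneous resolution theorem with Ehresmann's fibration theorem applied to a one-parameter smoothing of the singularity. Let $(X_0,0)$ be the germ of an isolated ADE surface singularity (the context of this subsection) and fix a one-parameter smoothing, i.e.\ a flat proper family $\pi : \mathcal{X} \to \Delta$ over a small disk $\Delta \subset \C$ whose central fiber $\mathcal{X}_0$ is (a representative of) $X_0$ and whose nearby fibers $\mathcal{X}_t$ for $t \neq 0$ are smooth; the Milnor fiber $M_f$ is diffeomorphic to any such nearby fiber restricted to a small Milnor ball.

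First I would invoke Brieskorn's theorem of simultaneous resolution: after a finite base change $\tau : \Delta' \to \Delta$ branched over $0$, the pullback family $\mathcal{X}' := \mathcal{X} \times_\Delta \Delta'$ admits a morphism $\rho : \widetilde{\mathcal{X}} \to \mathcal{X}'$ such that the composition $\widetilde{\pi} : \widetilde{\mathcal{X}} \to \Delta'$ is a smooth proper morphism and $\rho$ is a resolution of singularities on each fiber. Over the preimage of $\Delta \setminus \{0\}$ the map $\rho$ is an isomorphism (the fibers there are already smooth), so the fiber of $\widetilde{\pi}$ over any $s' \in \Delta' \setminus \tau^{-1}(0)$ is diffeomorphic to $M_f$. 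Over $0' \in \tau^{-1}(0)$ the fiber $\widetilde{\mathcal{X}}_{0'}$ is a resolution of $X_0$.

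The next step is to identify $\widetilde{\mathcal{X}}_{0'}$ with the minimal resolution $\widetilde{X}_0$. Because ADE singularities are rational double points, their minimal resolutions consist entirely of trees of smooth rational $(-2)$-curves configured according to the corresponding Dynkin diagram, and in particular contain no $(-1)$-curves. By Castelnuovo's contractibility criterion combined with the uniqueness of minimal models for smooth projective surfaces containing no $(-1)$-curves, this forces $\widetilde{\mathcal{X}}_{0'} = \widetilde{X}_0$ (up to biholomorphism, and in particular up to diffeomorphism). Concretely, any simultaneous resolution must at least contract to $\widetilde{X}_0$; Brieskorn's construction produces exactly the minimal one, so no further contractions are needed.

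Finally, since $\widetilde{\pi} : \widetilde{\mathcal{X}} \to \Delta'$ is a proper submersion over the connected base $\Delta'$, Ehresmann's fibration theorem yields a smooth trivialization, so all fibers are pairwise diffeomorphic. Combining with the identifications above gives $\widetilde{X}_0 \cong M_f$ as smooth $4$-manifolds. The main technical obstacle I anticipate is the identification of the central fiber of the simultaneous resolution as the \emph{minimal} (rather than merely \emph{some}) resolution; this requires either a careful reading of Brieskorn's construction or an a posteriori argument blowing down any residual $(-1)$-curves and verifying that smoothness of $\widetilde{\pi}$ is preserved, which is automatic in the ADE case because the Dynkin configuration contains no $(-1)$-curves to begin with.
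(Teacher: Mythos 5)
The paper does not prove this theorem --- it is quoted as a black box from \cite{brieskorn} --- so there is no in-paper proof to compare against. Your outline is the canonical route (Brieskorn--Tjurina simultaneous resolution after finite base change, then Ehresmann), and the overall strategy is right, but two points are glossed in a way that would not survive a literal reading.

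First, the family you write down is not proper in the sense needed for Ehresmann. The germ $(X_0,0)$ has no canonical compact representative; what one actually does is fix a Milnor radius $\epsilon$ and work with $f^{-1}(t)\cap \overline{B}_\epsilon$, which are compact manifolds \emph{with boundary}. After pulling back and resolving, $\widetilde{\mathcal X}\to\Delta'$ is a family of compact $4$-manifolds with boundary, and the classical Ehresmann theorem (for closed fibers) does not apply. You need the version for manifolds with corners/boundary, which requires that $\widetilde\pi$ restricted to the horizontal boundary $\rho^{-1}(\partial B_\epsilon\times\Delta')$ is \emph{also} a submersion onto $\Delta'$. This in turn is exactly the Milnor condition that $\epsilon$ be chosen small enough that $\partial B_\epsilon$ is transverse to every $X_t$ near $0$; since $\rho$ is a biholomorphism near $\partial B_\epsilon$ (the resolution only touches a neighborhood of the singular locus), this transversality does carry over, but it needs to be said. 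As written, invoking ``Ehresmann on a proper submersion'' skips the step that makes the statement about Milnor fibers rather than affine varieties.

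Second, the minimality argument is aimed at the wrong target. Castelnuovo contractibility and ``uniqueness of minimal models of smooth projective surfaces'' are global statements about projective surfaces; $\widetilde{\mathcal X}_{0'}$ is a germ (or a compact piece with boundary) and what you actually want is the uniqueness of the minimal resolution of a normal surface singularity, a local fact. More to the point, for rational double points the precise content of Brieskorn's theorem is that after base change there is a simultaneous \emph{minimal} resolution --- the central fiber being the minimal resolution is part of the cited statement, not something you need to re-derive by a contraction argument. If you do want an a posteriori check, a cleaner route is numerical: Ehresmann gives $b_2(\widetilde{\mathcal X}_{0'})=b_2(M_f)=\mu$, while any non-minimal resolution of an ADE point has strictly more than $\mu$ exceptional curves, forcing minimality. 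Finally, note that the theorem as stated in the paper (``isolated complex surface singularity'') is overbroad --- both the existence of a simultaneous resolution and the conclusion $\widetilde X_0\cong M_f$ fail outside the rational double point case, for instance because $b_2$ of the minimal resolution and of the Milnor fiber already disagree for a general normal surface singularity --- so you are right to restrict to the ADE context, and that restriction should be made explicit rather than left as ``the context of this subsection.''
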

	
	\begin{theorem} (Ohta-Ono)
		Let $X$ be any minimal symplectic filling of the link of a simple surface singularity. Then, the diffeomorphism type of $X$ is unique. Hence, it must be diffeomorphic to the Milnor fiber. Moreover, the symplectic deformation type of $X$ is unique.
	\end{theorem}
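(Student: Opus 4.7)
The plan is to close up the symplectic filling $X$ into a closed symplectic $4$-manifold by choosing a favorable symplectic cap, then apply pseudo-holomorphic curve techniques in the spirit of McDuff to pin down its diffeomorphism type, and finally extract uniqueness of $X$ itself. The link $L$ of a simple surface singularity is a spherical space form $S^3/\Gamma$ with $\Gamma \subset SU(2)$ a binary polyhedral group, equipped with its canonical (fillable) contact structure; by Brieskorn's theorem stated above, a concrete target for comparison is the Milnor fiber, equivalently the minimal resolution.

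First I would construct a symplectic cap $C$ for $L$ with reversed orientation such that $Z := X \cup_L C$ is a closed symplectic $4$-manifold, simply connected, and contains a symplectic sphere of non-negative self-intersection. A natural source of such a cap is to embed the minimal resolution of the singularity into a convenient ambient rational surface (for instance $\mathbb{CP}^2$ blown up at a few points) and take the complement of a neighborhood of the exceptional configuration; this produces a closed $Z$ containing a distinguished sphere class of positive self-intersection, independent of the filling $X$.

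Next I would invoke McDuff's classification: a closed symplectic $4$-manifold containing a symplectic sphere of non-negative self-intersection is rational or ruled, and simple connectivity of $Z$ (using that $L$ is a rational homology sphere and $C$ is simply connected) then forces $Z$ to be rational. Taubes--Seiberg--Witten theory and McDuff's foliations by pseudo-holomorphic spheres allow one to locate all embedded symplectic $(-1)$-spheres of $Z$; the hypothesis that $X$ is minimal forces each of them to lie in $C$. Knowing $C$ explicitly then determines the intersection form and second homology of $X$, and combined with Donaldson's theorem on negative-definite forms plus simple connectivity, this pins down the diffeomorphism type of $X$, which must therefore match that of the Milnor fiber.

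The main obstacle is controlling pseudo-holomorphic bubbling on $Z$ so that the foliation-by-spheres argument goes through: this is precisely where minimality of $X$ is used, since it confines bubbled $(-1)$-spheres to the fixed cap $C$ rather than the unknown piece $X$. For the symplectic deformation type statement, once the diffeomorphism class of $X$ is determined, I would use the uniqueness of symplectic forms on rational surfaces in a fixed cohomology class (via Lalonde--McDuff and Gromov--Taubes invariants), combined with a normalization of the symplectic form in a collar of $\partial X = L$ using the contact form and a Moser-type argument, to upgrade the diffeomorphism-level uniqueness to uniqueness of the symplectic deformation type.
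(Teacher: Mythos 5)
The paper does not prove this theorem at all; it is cited to Ohta--Ono \cite{OhtaOno}, and the only guidance the paper offers is the follow-up remark noting that Ohta--Ono attach a symplectic cap to the filling and that the resulting closed manifolds, while admitting many symplectic forms, are all symplectic deformation equivalent. Your sketch correctly expands on precisely that strategy: cap off $X$, use McDuff's theorem on symplectic spheres of non-negative square to conclude $Z$ is rational, use the $J$-holomorphic foliation together with Taubes--Seiberg--Witten theory to locate all exceptional spheres, and exploit minimality of $X$ to confine them to the known cap $C$. That is the Ohta--Ono approach, and the structure of your argument is sound up to this point.

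The step I would flag as a gap is the final one in your diffeomorphism-type paragraph. You pass from ``knowing $C$ explicitly determines $H_2(X)$ and the intersection form of $X$'' to ``this pins down the diffeomorphism type of $X$,'' invoking Donaldson's theorem on negative-definite forms together with simple connectivity. Donaldson's diagonalization theorem applies to \emph{closed} simply connected $4$-manifolds, and in any case the intersection form of the Milnor fiber of an ADE singularity is minus the Cartan matrix, which is \emph{not} diagonalizable over $\Z$ --- so the theorem is not even directly relevant to the form in question. More fundamentally, for a simply connected compact $4$-manifold with boundary a rational homology sphere, the intersection form does not determine the diffeomorphism type; Freedman's and Donaldson's results do not give such a classification for bounded manifolds. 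In the actual Ohta--Ono argument, the diffeomorphism type of $X$ is recovered geometrically rather than homologically: the ruling of the rational surface $Z$ by $J$-holomorphic spheres and the exceptional spheres that remain in $C$ allow one to reconstruct a handle decomposition of $X$ directly and identify it with the minimal resolution (equivalently, by Brieskorn's theorem, the Milnor fiber). Your deformation-type paragraph is closer to the mark and consistent with the paper's remark, though you should be explicit that the cap must be chosen uniformly so that the normalization along a collar of $L$ is cohomologically unobstructed before applying a Moser-type argument.
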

	
	\noindent \textbf{Remark:} Recall that two symplectic forms $\omega_0,\omega_1$ are \textbf{symplectic deformation equivalent} if there exists a diffeomorphism $\phi$ such that $\phi^* \omega_1$ and $\omega_0$ are isotopic through a family of (not necessarily cohomologous) symplectic forms. In the proof of the theorem, Ohta-Ono attach a symplectic cap onto the fillings; these objects have many possible symplectic structures though they are all symplectic deformation equivalent.
	One can observe that the minimal resolution contains closed holomorphic curves and hence, cannot be a Stein filling. The Milnor filling, on the other hand, is Stein. One way to turn the minimal resolution into a Stein filling is to apply a hyperK\"ahler rotation which gives some kind of symplectic deformation and might not be the one that brings us to the Milnor filling, but at least it would turn the holomorphic spheres into Lagrangian spheres.
	In conversations with Kaoru Ono, he indicated that the result can be strengthened from one concerning symplectic deformation. The work of Lalonde-McDuff \cite{mcduff_lalonde} can be used to pinpoint the cohomology class of the symplectic structure.
	
	\begin{corollary}
		If two simple complex surface singularities are adjacent $[f_0]\to[f_1]$, then the Dynkin diagram for $f_1$ embeds into the Dynkin diagram for $f_0$.
	\end{corollary}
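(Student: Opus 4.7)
The strategy is to translate adjacency into a symplectic embedding of Milnor fibers, reinterpret these Milnor fibers as minimal resolutions via Brieskorn--Ohta--Ono, and recognize the Dynkin diagrams as intersection graphs of configurations of Lagrangian $(-2)$-spheres.

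First I would produce, from the adjacency, a symplectic embedding of the Milnor fiber of the less singular member of the family into that of the more singular (central) one. Choose a family $f_t$ realizing the adjacency, fix $0<\delta \ll \epsilon$ for the central singularity, and note that for small $t\neq 0$, $f_t$ acquires a critical point $p_t$ near the origin whose local type is that of the adjacent singularity. Picking $0<\delta'\ll\epsilon'\ll\epsilon$, the piece $f_t^{-1}(\delta')\cap B_{\epsilon'}(p_t)$ is a Milnor fiber for the adjacent singularity and sits inside $f_t^{-1}(\delta')\cap B_\epsilon(0)$, which is symplectically a Milnor fiber for the central singularity by an Ehresmann parallel-transport argument along the family. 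Both sides inherit their symplectic structures from the ambient K\"ahler form, so this inclusion is a symplectic embedding with contact-type boundary (compare the picture in Figure 1 for the $A_1\subset A_2$ case).

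Second, using Brieskorn's theorem and the Ohta--Ono uniqueness cited above, each Milnor fiber is symplectically deformation equivalent to the minimal resolution $\widetilde{X}$ carrying a K\"ahler form; after a hyperK\"ahler rotation the exceptional $(-2)$-curves become a configuration of Lagrangian $(-2)$-spheres whose intersection graph is precisely the ADE Dynkin diagram of the singularity. This presents both Milnor fibers as plumbings of disk bundles carrying a distinguished collection of Lagrangian vanishing spheres indexed by the vertices of the Dynkin diagram, freely generating $H_2$, with intersection form equal to the negative Cartan matrix.

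Third, the distinguished Lagrangian spheres of the smaller Milnor fiber push forward through the symplectic embedding to Lagrangian $(-2)$-spheres inside the larger one, preserving all intersection numbers. The main obstacle is that a priori this image need not equal a subcollection of the standard distinguished configuration inside the larger Milnor fiber; it could be related to such a subcollection by a sequence of generalized Dehn twists. I would resolve this by appealing to the classification, in the spirit of Seidel and Khovanov--Seidel, of Lagrangian $(-2)$-sphere collections in ADE surface Milnor fibers: modulo the action of the compactly supported symplectic mapping class group, every such collection whose intersection graph is an ADE tree is equivalent to a subcollection of the standard distinguished basis of vanishing cycles. Applying a suitable mapping class group element carries the image onto a subconfiguration of the standard one, yielding the desired subgraph embedding of Dynkin diagrams.
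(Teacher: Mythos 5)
Your first two steps mirror the paper's proof. The paper produces a Stein cobordism between the links $L_1$ and $L_0$ and caps it off with the unique minimal filling of $L_1$ (via Ohta--Ono together with Brieskorn) to obtain a filling $X_0 \supset X_1$; you instead argue directly with nested Milnor fibers and parallel transport along the family, but either way you arrive at the same symplectic embedding $X_1 \hookrightarrow X_0$ of minimal resolutions.

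The divergence, and the gap, is in your final step. The paper at this point drops down to homology: the embedding induces an isometry $H_2(X_1;\Z) \hookrightarrow H_2(X_0;\Z)$ of the negative Cartan lattices, and the Dynkin subgraph statement is then extracted by lattice-theoretic linear algebra. You instead stay at the symplectic level and invoke a purported classification, "in the spirit of Seidel and Khovanov--Seidel," asserting that every configuration of Lagrangian $(-2)$-spheres with ADE-tree intersection graph in an ADE Milnor fiber is, modulo the compactly supported symplectic mapping class group, equivalent to a sub-configuration of the standard distinguished basis. No such classification exists in the literature. What is known is considerably weaker: Ritter's theorem that all closed exact Lagrangians in 4-dimensional ADE Milnor fibers are spheres, Hind's and Wu's uniqueness results for individual Lagrangian spheres in certain homology classes in the $A_n$ case, and the Khovanov--Seidel faithfulness of the braid group action. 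None of these pins down an entire Lagrangian configuration up to the symplectic MCG, and the assertion you need is open. It is also far more than the statement requires --- once you have the embedding $X_1 \hookrightarrow X_0$, the conclusion is lattice-theoretic, and the homological route is both correct and strictly easier. I would replace your third step by it. (For completeness, note that the paper's "linear algebra" is itself only sketched, and one should be careful: an abstract isometric embedding of ADE root lattices does not automatically yield a diagram embedding --- $4A_1$ embeds as a sublattice of $D_4$ but not as an induced subgraph --- so the argument should exploit the fact that the embedding arises from an adjacency of simple singularities and not from an arbitrary lattice map.)
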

	
	\begin{proof}[Proof sketch]
		Suppose that $f_0,f_1:\C^3 \to \C$ are polynomials defining adjacent simple singularities; we will view $f_0$ as the "worse" singularity. The adjacency allows us to get a Stein cobordism between the link $L_0$ and $L_1$. Moreover, we can cap off the cobordism by using the unique minimal symplectic filling of $L_1$ to get a filling for $L_0$. Call these $X_1$ and $X_0$. This filling is also a minimal symplectic filling and hence, must be unique. By construction, the filling for $L_1$ embeds into the filling for $L_0$. Now, the Dynkin diagram for $L_1$ is given by studying the exceptional divisors and their intersections. This gives us a graph and the graph is precisely a Dynkin diagram of ADE type. Since we have an embedding $X_1 \hookrightarrow X_0$, we have an embedding $H_2(X_1,\Z) \hookrightarrow H_2(X_0,\Z)$ which respects the intersection form. One needs to do some linear algebra since its possible that the basis for $X_1$ is not sent to the basis for $X_0$ but rather some linear combination. But this is doable and once done, this implies that the Dynkin diagram for $X_1$ embeds into the Dynkin diagram for $X_0$
	\end{proof}
	
	\subsection{Embedding Milnor Fibers of Adjacent Singularities}
	
	Now, if two hypersurface singularities are adjacent, then there is a natural cobordism between their mapping tori and also a natural cobordism between their Milnor fibrations. We'll first construct a cobordism between their mapping tori. Suppose we have a 1-parameter family of polynomials $f_t$ giving us the adjacency of $f_0$ to $f_\eta$ for small $\eta$; such a family might not always have good algebraic properties such as being flat along $t$. However, we won't need such properties because our study is from a symplectic perspective. We will symplectically modify the zero loci via bump functions so that they agree near the boundary of $B_\epsilon$ in the following way. Let $\beta:\C^{n+1} \to \R$ be a bump function supported in the region where the radius is $\epsilon' \leq r \leq \epsilon$; we can assume that $\beta = 1$ for $r > (\epsilon'+\epsilon)/2$ and is radially symmetric. We can later specify what $\epsilon'$ is but it will be very near $\epsilon$ and should be chosen such that the compact support of the monodromy of $g$ does not intersect $S^{2n+1}_{\epsilon'}$. Then, letting $g = f_\eta$ for $\eta$ extremely small, we define $\tilde{g} = g+(f-g)\beta$.
	
	\begin{lemma} \label{embed}
		Let $f,g$ be adjacent as above. Then the Milnor fiber of $g$ smoothly embeds into the Milnor fiber of $\tilde{g}$ which smoothly embeds into the Milnor fiber of $f$.
	\end{lemma}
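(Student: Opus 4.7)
The plan is to establish the two claimed inclusions separately: the first by a direct set-theoretic argument using the construction of $\tilde{g}$, and the second via a one-parameter family of functions interpolating between $f$ and $\tilde{g}$ together with an Ehresmann-type isotopy.

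First I would verify the regularity of $\tilde{g}$. For sufficiently small $\eta$, $g = f_\eta$ is $C^\infty$-close to $f$ on $B_\epsilon$. Since $f$ has no critical points in the compact annulus $\{\epsilon' \leq r \leq \epsilon\}$, a $C^1$-perturbation argument shows that $\tilde{g} = g + (f-g)\beta$ also has none there; thus the critical locus of $\tilde{g}$ in $B_\epsilon$ coincides with the critical locus of $g$ inside $B_{\epsilon'}$. Near $\partial B_\epsilon$ we have $\tilde{g} \equiv f$, so $\tilde{g}^{-1}(\delta)$ is transverse to $\partial B_\epsilon$ by the Milnor fibration theorem for $f$. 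For $\delta$ a small regular value of $\tilde{g}$, $M_{\tilde{g}} := \tilde{g}^{-1}(\delta) \cap B_\epsilon$ is then a well-defined smooth compact manifold with boundary. The first embedding $M_g \hookrightarrow M_{\tilde{g}}$ is now immediate: choosing the Milnor radius $\epsilon_g < \epsilon'$ for $g$, we get the set-theoretic equality $M_g = g^{-1}(\delta) \cap B_{\epsilon_g} = \tilde{g}^{-1}(\delta) \cap B_{\epsilon_g}$, an open submanifold of $M_{\tilde{g}}$.

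For the second embedding $M_{\tilde{g}} \hookrightarrow M_f$, I would consider the interpolation $h_s := f + s(\tilde{g} - f) = f + s(g-f)(1-\beta)$ for $s \in [0,1]$, so $h_0 = f$ and $h_1 = \tilde{g}$. Each $h_s$ coincides with $f$ in the outer region $\{r > (\epsilon'+\epsilon)/2\}$, so the boundary $h_s^{-1}(\delta) \cap \partial B_\epsilon$ is independent of $s$; and for small $\eta$ the critical points of $h_s$ all lie in $B_{\epsilon'}$, with critical values of size $O(\eta)$. Choosing $\delta$ small but strictly above (in modulus) the envelope of these critical values makes $\delta$ a regular value of $h_s$ for every $s \in [0,1]$. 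Applying Ehresmann's fibration theorem to the projection $(x, s) \mapsto s$ on the compact manifold $\{(x, s) : h_s(x) = \delta\} \subset B_\epsilon \times [0, 1]$, together with the isotopy extension theorem to lift the trivialization ambiently, produces a smooth isotopy $\Phi_s : B_\epsilon \to B_\epsilon$ fixing a neighborhood of $\partial B_\epsilon$ with $\Phi_1(M_f) = M_{\tilde{g}}$. The inverse $\Phi_1^{-1}$ restricted to $M_{\tilde{g}}$ then gives the desired smooth embedding $M_{\tilde{g}} \hookrightarrow M_f$.

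The main obstacle is the coordinated choice of the parameters $\epsilon, \epsilon', \delta, \eta$ so that all the transversality and regularity conditions hold uniformly in $s$: specifically, ensuring that $\delta$ lies strictly above the critical values of the interpolation (which may accumulate at $0$ as $\eta \to 0$) while remaining small enough for the Milnor fibration picture of $f$ to be valid, and verifying that no critical points of $h_s$ escape $B_{\epsilon'}$ during the deformation.
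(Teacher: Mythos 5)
Your proof is correct but takes a genuinely different route from the paper's. The paper works entirely within the miniversal deformation space of $f$: it observes that a generic smoothing of $g$ is automatically a generic smoothing of $f$ (since $g$ is itself a small perturbation of $f$ within the discriminant), identifies the Milnor fiber with the zero locus of such a common smoothing, and produces the nested embedding by cutting that single zero locus off at two different radii $\hat\epsilon \ll \epsilon$; the role of $\tilde g$ and the vector-field flow is secondary. You instead handle the two inclusions separately and more hands-on: the first as a direct set-theoretic inclusion (valid because $\tilde g \equiv g$ on $B_{\epsilon'}$ and the same $\delta$ is used), and the second by introducing the linear interpolation $h_s = f + s(\tilde g - f)$, arranging $\delta$ to be a common regular value, and invoking Ehresmann together with the isotopy extension theorem. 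What your route buys is an explicit accounting of how $\eta, \delta, \epsilon', \epsilon$ must be coordinated (the step the paper largely suppresses), and it also yields the stronger conclusion that $\tilde g^{-1}(\delta)\cap B_\epsilon$ is actually \emph{diffeomorphic} to $M_f$, not merely embedded. What the paper's route buys is brevity and the conceptual point that the comparison really lives in the discriminant of the versal deformation.

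One thing worth noting when you go on to read the paper's proof: it closes by asserting that $M_g$ and $M_{\tilde g}$ are diffeomorphic via a radial vector-field flow. Under your (natural) reading $M_{\tilde g} := \tilde g^{-1}(\delta)\cap B_\epsilon$, that assertion is incompatible with your conclusion $M_{\tilde g} \cong M_f$ whenever $\mu(g) < \mu(f)$ (e.g.\ $A_1 \to A_2$), since a radial flow from radius $\epsilon$ down to radius $\hat\epsilon$ must cross tangencies of $\tilde g^{-1}(\delta)$ with spheres. Your reading is the one that makes the triple of embeddings in the lemma statement non-trivial, and your conclusion is the one that holds for all adjacent pairs; the paper's auxiliary diffeomorphism claim is only accurate in the $\mu$-constant regime or under the small-ball reading of $M_{\tilde g}$. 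This does not affect the truth of the lemma as stated, since it only asserts embeddings, but it is a genuine ambiguity to keep in mind.
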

	
	\begin{proof}
		We take this opportunity to recall some basic facts. The Milnor fiber of a polynomial with isolated singularity is isomorphic to the zero locus of a generic smoothing of $f$. That is, if $\mu$ is the Milnor number which is the dimension of $\C[z_1,...,z_{n+1}]/\text{Jac}(f)$ where $\text{Jac}(f)$ is the Jacobian ideal of $f$, then we have a miniversal deformation space $\C^\mu$. Take a small ball $B \subset \C^\mu$ centered at 0. A tuple $(\eta_1,...,\eta_\mu) \in B$ gives us $\eta = \sum \eta_i p_i$ where $p_i$ is a basis for $\C[z_1,...,z_{n+1}]/\text{Jac}(f)$. We construct a fibration over $B$ where the fiber is the zero locus of $f+\eta$ intersected with a small ball in $\C^{n+1}$. Then, when $\eta = 0$, this is our singularity and for generic $\eta$ (one not in the discriminant locus which has codim 1), the fiber is isomorphic to the Milnor fiber. So in this sense, a generic $\eta$ gives a generic smoothing of $f$.
		
		Because we have an adjacency, what we can do is first perturb $f$ so that we get the singularity type of $g$ and then perturb again to get a smoothing of $g$. Since the latter step is done by choosing a generic $\eta$, we see that this is also a generic smoothing of $f$. The way to get the embedding is to use different radii of balls as our cutoff. Let $\hat{\epsilon} \ll \epsilon$. For the Milnor fiber of $g$, it is the zero locus of a smoothing of $g$ intersected with $B_{\hat{\epsilon}}$. This embeds into the zero locus of the same smoothing of $g$ intersected with a larger ball $B_\epsilon$. The latter is also a smoothing of $f$. Hence, the Milnor fiber of $g$ embeds into the Milnor fiber of $f$. For a similar proof, see \cite{keating}.
		
		Observe also that in the modification above to obtain $\tilde{g}$, since it happens very near the boundary of $B_\epsilon$, these smoothings can be chosen to not affect $f$ and $\tilde{g}$ near $\partial B_\epsilon$. For example, $f^{-1}(0), \tilde{g}^{-1}(0)$ are transverse to $\partial B_\epsilon$. As such, the Milnor fiber $M_g$ also embeds into the Milnor fiber $M_{\tilde{g}}$ which embeds into the Milnor fiber $M_f$. Moreover, $M_g$ and $M_{\tilde{g}}$ are diffeomorphic as one can construct a vector field whose flow maps $M_{\tilde{g}}$ onto $M_g$. Here is a picture.
		
		\begin{center}
			\tikzset{every picture/.style={line width=0.75pt}} %set default line width to 0.75pt        
			
			\begin{tikzpicture}[x=0.75pt,y=0.75pt,yscale=-1,xscale=1]
				%uncomment if require: \path (0,372); %set diagram left start at 0, and has height of 372
				
				%Curve Lines [id:da5420547952716865] 
				\draw [line width=1.5]    (355.42,224.86) .. controls (367.33,257.5) and (414.77,206.09) .. (436.57,262.62) ;
				%Curve Lines [id:da24025337540987635] 
				\draw [line width=1.5]    (355.54,111.81) .. controls (363.33,86.5) and (375.81,89.52) .. (385.38,89.76) .. controls (394.95,90.01) and (423.33,113.17) .. (433.77,64.84) ;
				%Curve Lines [id:da6739804758408963] 
				\draw    (349.22,211.33) .. controls (371.4,256.44) and (381.54,279.12) .. (391.33,301.5) ;
				%Curve Lines [id:da21397622713626885] 
				\draw    (316.83,168.1) .. controls (388.19,164.14) and (422.57,211.62) .. (436.57,262.62) ;
				%Curve Lines [id:da8731683817333415] 
				\draw    (316.83,168.1) .. controls (396.58,147.36) and (413.33,116.5) .. (432.77,70.4) ;
				%Shape: Ellipse [id:dp1581194322078936] 
				\draw  [dash pattern={on 0.84pt off 2.51pt}] (164.33,168.1) .. controls (164.33,83.91) and (232.61,15.66) .. (316.83,15.66) .. controls (401.06,15.66) and (469.33,83.91) .. (469.33,168.1) .. controls (469.33,252.29) and (401.06,320.54) .. (316.83,320.54) .. controls (232.61,320.54) and (164.33,252.29) .. (164.33,168.1) -- cycle ;
				%Shape: Ellipse [id:dp6585130852464707] 
				\draw  [dash pattern={on 0.84pt off 2.51pt}] (248.63,168.1) .. controls (248.63,130.45) and (279.16,99.92) .. (316.83,99.92) .. controls (354.5,99.92) and (385.04,130.45) .. (385.04,168.1) .. controls (385.04,205.75) and (354.5,236.28) .. (316.83,236.28) .. controls (279.16,236.28) and (248.63,205.75) .. (248.63,168.1) -- cycle ;
				%Shape: Ellipse [id:dp5335788850778409] 
				\draw  [color={rgb, 255:red, 255; green, 0; blue, 0 }  ,draw opacity=1 ][fill={rgb, 255:red, 255; green, 0; blue, 0 }  ,fill opacity=1 ] (350.64,224.86) .. controls (350.64,222.22) and (352.78,220.08) .. (355.42,220.08) .. controls (358.06,220.08) and (360.2,222.22) .. (360.2,224.86) .. controls (360.2,227.5) and (358.06,229.64) .. (355.42,229.64) .. controls (352.78,229.64) and (350.64,227.5) .. (350.64,224.86) -- cycle ;
				%Shape: Ellipse [id:dp13121358083732826] 
				\draw  [color={rgb, 255:red, 0; green, 0; blue, 255 }  ,draw opacity=1 ][fill={rgb, 255:red, 0; green, 0; blue, 255 }  ,fill opacity=1 ] (428.99,69.62) .. controls (428.99,66.98) and (431.13,64.84) .. (433.77,64.84) .. controls (436.41,64.84) and (438.55,66.98) .. (438.55,69.62) .. controls (438.55,72.26) and (436.41,74.4) .. (433.77,74.4) .. controls (431.13,74.4) and (428.99,72.26) .. (428.99,69.62) -- cycle ;
				%Curve Lines [id:da5880687424034943] 
				\draw    (232.33,135.5) .. controls (255.33,106.5) and (324.78,159.72) .. (349.22,211.33) ;
				%Curve Lines [id:da043699630475921625] 
				\draw    (234.33,202.5) .. controls (263.33,228.5) and (326.48,171.98) .. (351.35,119.39) ;
				%Curve Lines [id:da9402739527339292] 
				\draw    (351.35,119.39) .. controls (365.8,87.22) and (372.94,78.25) .. (388.33,33.5) ;
				%Shape: Ellipse [id:dp4548857614142561] 
				\draw  [color={rgb, 255:red, 255; green, 0; blue, 0 }  ,draw opacity=1 ][fill={rgb, 255:red, 255; green, 0; blue, 0 }  ,fill opacity=1 ] (350.76,111.81) .. controls (350.76,109.17) and (352.9,107.03) .. (355.54,107.03) .. controls (358.18,107.03) and (360.32,109.17) .. (360.32,111.81) .. controls (360.32,114.45) and (358.18,116.59) .. (355.54,116.59) .. controls (352.9,116.59) and (350.76,114.45) .. (350.76,111.81) -- cycle ;
				%Shape: Ellipse [id:dp15238140945376166] 
				\draw  [color={rgb, 255:red, 0; green, 0; blue, 255 }  ,draw opacity=1 ][fill={rgb, 255:red, 0; green, 0; blue, 255 }  ,fill opacity=1 ] (431.79,262.62) .. controls (431.79,259.98) and (433.93,257.84) .. (436.57,257.84) .. controls (439.21,257.84) and (441.35,259.98) .. (441.35,262.62) .. controls (441.35,265.26) and (439.21,267.4) .. (436.57,267.4) .. controls (433.93,267.4) and (431.79,265.26) .. (431.79,262.62) -- cycle ;
				%Curve Lines [id:da29975410013770265] 
				\draw    (234.33,202.5) .. controls (216.33,189.5) and (218.33,155.5) .. (232.33,135.5) ;
				%Shape: Circle [id:dp011268652518372502] 
				\draw  [fill={rgb, 255:red, 0; green, 0; blue, 0 }  ,fill opacity=1 ] (315.58,167.85) .. controls (315.58,166.61) and (316.59,165.6) .. (317.83,165.6) .. controls (319.08,165.6) and (320.08,166.61) .. (320.08,167.85) .. controls (320.08,169.09) and (319.08,170.1) .. (317.83,170.1) .. controls (316.59,170.1) and (315.58,169.09) .. (315.58,167.85) -- cycle ;
				
				% Text Node
				\draw (416,187.4) node [anchor=north west][inner sep=0.75pt]    {$f$};
				% Text Node
				\draw (207,161.4) node [anchor=north west][inner sep=0.75pt]    {$g$};
				% Text Node
				\draw (395,66.4) node [anchor=north west][inner sep=0.75pt]    {$\tilde{g}$};
			\end{tikzpicture}
			
			\caption{Figure 2: A modification}
		\end{center} 
		
	\end{proof}
	
	Next, we prove a symplectic result about the Milnor fiber of $\tilde{g}$.
	
	\begin{lemma}
		Under the modification, $M_{\tilde{g}}$ still carries a symplectic structure and the symplectic type is independent of the choice of bump function $\beta$ from Lemma \ref{embed}. Moreover, $M_{\tilde{g}}$ and $M_g$ are exact symplectomorphic. 
	\end{lemma}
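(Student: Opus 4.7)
The plan is to argue the three claims (symplectic structure, independence of bump function, exact symplectomorphism with $M_g$) in sequence, with the unifying tool being Moser's trick applied to the smooth interpolating family of defining functions.

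First, for existence of the symplectic structure: because $g = f_\eta$ for $\eta$ arbitrarily small in the continuous family $(f_t)$, the difference $f-g$ is $C^\infty$-small on the compact set $\overline{B_\epsilon}$, and hence so is $\tilde g - g = \beta(f-g)$. The properties that $\delta$ is a regular value of $\tilde g$, that $\tilde g^{-1}(\delta)$ meets $\partial B_\epsilon$ transversally, and that $\omega_{std}|_{\tilde g^{-1}(\delta)}$ is nondegenerate are all $C^1$-open conditions on the defining function. Since the holomorphic $g^{-1}(\delta)$ enjoys all three, so does $\tilde g^{-1}(\delta)$ for $\eta$ sufficiently small, and $M_{\tilde g}$ inherits the exact primitive $\theta_{std}|_{M_{\tilde g}}$.

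Second, for the exact symplectomorphism $M_g \cong M_{\tilde g}$, I would introduce the smooth one-parameter family $\tilde g_s := g + s\beta(f-g)$, $s\in[0,1]$, so that $\tilde g_0 = g$ and $\tilde g_1 = \tilde g$. By the same openness argument each $\Sigma_s := \tilde g_s^{-1}(\delta) \cap B_\epsilon$ is an exact symplectic submanifold with primitive $\alpha_s := \theta_{std}|_{\Sigma_s}$, and because $\beta\equiv 0$ on $\{r < \epsilon'\}$, the family is constant in the inner region and only modifies the annular neighborhood of $\partial B_\epsilon$. Construct a smooth family of diffeomorphisms $\psi_s:\Sigma_0 \to \Sigma_s$ by integrating an ambient vector field transverse to the family (it exists because the $\Sigma_s$ vary smoothly and remain smooth submanifolds), producing a family of Liouville forms $\hat\alpha_s := \psi_s^*\alpha_s$ on $\Sigma_0$. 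Moser's trick then solves $\iota_{X_s}\omega_s = -\tfrac{d}{ds}\hat\alpha_s$ for a time-dependent vector field $X_s$ whose flow $\phi_s$ satisfies $\phi_s^*\hat\alpha_s - \hat\alpha_0 = dh_s$, so $\psi_1\circ\phi_1:\Sigma_0 \to M_{\tilde g}$ is exact symplectic. Finally, $\Sigma_0 = g^{-1}(\delta)\cap B_\epsilon$ is exact symplectomorphic to the standard Milnor fiber $M_g$ by absorbing the outer Liouville collar (the region $B_\epsilon \setminus B_{\hat\epsilon}$ is free of critical points of $g$, so the radial Liouville flow on $\C^{n+1}$, suitably projected to be tangent to $g^{-1}(\delta)$, gives a Liouville isotopy shrinking the larger domain onto the standard Milnor fiber).

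Third, for independence of the bump function, I would take two admissible bump functions $\beta^0, \beta^1$ and form the linear interpolation $\beta^r := (1-r)\beta^0 + r\beta^1$, which remains admissible (support and boundary conditions are preserved). Applying the same Moser argument to the family $\tilde g^r := g + \beta^r(f-g)$ yields an exact symplectomorphism between the two resulting $M_{\tilde g}$'s.

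The main technical obstacle I expect is controlling the Moser vector field near the contact boundary $\partial \Sigma_s$. The family of boundaries $\partial \Sigma_s = \tilde g_s^{-1}(\delta) \cap \partial B_\epsilon$ varies smoothly and remains contactomorphic (small perturbations of the transverse intersection), so in principle Moser extends; the care required is in choosing $\psi_s$ and the primitive $\theta_{std}$ so that $X_s$ is tangent to $\partial B_\epsilon$ and the induced boundary map is a genuine contactomorphism, i.e., compatible with the Liouville collar. This is most cleanly handled by passing to the symplectic completion, where Moser's construction has enough flexibility to produce a compactly-supported correction outside a Liouville collar of the boundary.
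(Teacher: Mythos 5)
Your proposal is correct and its skeleton matches the paper's: interpolate the defining function (and the bump function) linearly, note that all the resulting fibers carry exact symplectic forms restricted from $\C^{n+1}$, and run Moser. The one genuine difference is in how the word \emph{exact} is obtained. You run Moser directly on the family of primitives $\hat\alpha_s := \psi_s^*\alpha_s$ and observe that the Moser flow automatically produces an exact symplectomorphism; this is correct, since $\frac{d}{ds}(\phi_s^*\hat\alpha_s) = \phi_s^*\bigl(d\iota_{X_s}\hat\alpha_s + \iota_{X_s}\omega_s + \tfrac{d}{ds}\hat\alpha_s\bigr) = \phi_s^*\, d(\iota_{X_s}\hat\alpha_s)$ once $X_s$ is chosen to kill $\iota_{X_s}\omega_s + \tfrac{d}{ds}\hat\alpha_s$, giving $\phi_s^*\hat\alpha_s - \hat\alpha_0 = dh_s$. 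The paper instead uses the plain Moser lemma to get a symplectomorphism and then upgrades to an exact one by quoting Lemma 11.2 of Cieliebak--Eliashberg (any symplectomorphism of finite type Liouville manifolds is isotopic to an exact symplectomorphism), which lets it sidestep the boundary care you flag in your last paragraph. Both routes are valid: yours is more self-contained but requires the completion/compactly-supported-correction bookkeeping you describe, whereas the paper's outsources exactly that bookkeeping to a standard reference. One small point worth making explicit in your second step: the absorption of the outer collar $B_\epsilon\setminus B_{\hat\epsilon}$ onto the standard $M_g$ is really the content of the preceding Lemma (Lemma~\ref{embed}), where the paper notes a vector field carrying $M_{\tilde g}$ onto $M_g$; it is cleaner to cite that than to re-derive it inside this proof.
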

	
	\begin{proof}
		By virtue of being smoothly embedded in $\C^{n+1}$, $g^{-1}(0)$ and $\tilde{g}^{-1}(0)$ both have exact symplectic forms (away from the singularity) and the modification we've described. Observe that though $\beta$ is not $C^1$-small, $f-g$ is $C^1$-small, as small as we would like by tuning the $\eta$ from the proof of Lemma \ref{embed}. Hence, $(f-g)\beta$ is $C^1$-small which means the polynomials $\tilde{g}$ and $g$ are $C^1$-close.
		
		If $\beta_1,\beta_2$ are two bump functions that are radially symmetric, then we may as well think of them as functions $\beta_1,\beta_2: [\epsilon',\epsilon] \to [0,1]$ with non-negative first derivative. Then, if $t \in [0,1]$, $(1-t) \beta_1 + t \beta_2$ is also a bump function in that it equals 0 and 1 in the appropriate regimes and the first derivative is non-negative. This isotopy in $t$ gives us a family of symplectic fibers. Since the symplectic forms are all exact, the Moser lemma shows that the fibers are all symplectomorphic (in the proof of Lemma \ref{swatch}, this is outlined). Lastly, we cite a result found in Cieliebak-Eliashberg's book; the proof is not difficult.
		
		\begin{lemma}\label{gavela} (\cite{Cieliebak-Eliashberg},Lemma 11.2)
			Any symplectomorphism $f:(V,\lambda) \to (V',\lambda')$ between finite type Liouville manifolds is diffeomorphic to an exact symplectomorphism.
		\end{lemma}
		
		To apply this lemma, we point out that Milnor fibers are finite type Stein domains and we may complete them in the usual way to Stein (and hence, Liouville) manifolds.		
	\end{proof}
	
	\begin{lemma} \label{swatch}
	The fixed-point Floer cohomology $HF^*$ is invariant under this modification.
	\end{lemma}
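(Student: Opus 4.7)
The plan is to produce an exact symplectomorphism conjugating $\phi_g$ to a map that is compactly supported Hamiltonian isotopic to $\phi_{\tilde g}$, and then to invoke the standard invariance of fixed-point Floer cohomology under such isotopies.

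First, I would introduce the one-parameter family $\tilde g_s := g + s(f - g)\beta$ for $s \in [0,1]$, which continuously interpolates between $\tilde g_0 = g$ and $\tilde g_1 = \tilde g$. Since $(f - g)\beta$ is $C^1$-small (by tuning $\eta$ as in the previous lemma) and supported in the shell $\epsilon' \leq r \leq \epsilon$, each $\tilde g_s$ is smooth on $B_\epsilon$ with a single isolated singularity at the origin inherited from $g$, and is a submersion onto a small disc around $0 \in \C$ elsewhere. The Milnor fibers $M_{\tilde g_s}$ therefore form a smooth family of finite-type exact symplectic manifolds with contact boundary. Applying Moser's argument to this family, exactly as in the preceding lemma, produces a smooth isotopy $\Phi_s : M_g \to M_{\tilde g_s}$ of exact symplectomorphisms with $\Phi_0 = \id$ that restricts to the identity on the preimage of $\{r \leq \epsilon'\}$ (where $\tilde g_s \equiv g$).

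Next, I would choose the horizontal distributions defining the parallel-transport monodromies $\phi_{\tilde g_s}$ so that each vector field vanishes in $\{r \geq \epsilon'\}$; this is possible because $\tilde g_s$ is a submersion there and the horizontal lifts of $\partial_\theta$ can be smoothly cut off before reaching $\partial B_\epsilon$. Consequently every $\phi_{\tilde g_s}$ is compactly supported in $\{r < \epsilon'\}$, where $\tilde g_s \equiv g$, and the conjugated maps $\Phi_s^{-1} \circ \phi_{\tilde g_s} \circ \Phi_s$ form a smooth family of compactly supported exact symplectomorphisms of $M_g$ connecting $\phi_g$ to $\Phi_1^{-1} \circ \phi_{\tilde g} \circ \Phi_1$. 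The invariance of $HF^*$ under isotopy through compactly supported graded exact symplectomorphisms --- a standard continuation-map statement, cf.\ property 2 of Section \ref{floer} --- together with the fact that conjugation by an exact symplectomorphism preserves $HF^*$, yields the desired isomorphism $HF^*(\phi_g, +) \cong HF^*(\phi_{\tilde g}, +)$.

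The main obstacle is the uniform-support claim: one must verify that the horizontal distributions for the family $\tilde g_s$ can be chosen to smoothly interpolate and to be cut off cleanly inside the shell without creating spurious fixed points near the boundary. This uses the $C^1$-smallness of the perturbation --- so that the horizontal lifts at the two ends of the interpolation are $C^0$-close --- together with a bump-function truncation whose error differs from the identity by a small Hamiltonian diffeomorphism, which absorbs into the final Hamiltonian isotopy without affecting the Floer invariant.
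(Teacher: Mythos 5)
Your proof reaches the same conclusion as the paper's, but by a noticeably different route. The paper's argument parametrizes a family of Liouville domains, applies Gray stability to the contact forms on the links and a Moser argument to the symplectic forms on the fibers, and then concludes by appealing to property~2 of Section~\ref{floer} (graded contactomorphic abstract contact open books have isomorphic $HF^*$). You bypass the contact-geometric side entirely: after a Moser isotopy $\Phi_s: M_g \to M_{\tilde g_s}$ that fixes the region $\{r \leq \epsilon'\}$ where the two functions agree, you conjugate the monodromies $\phi_{\tilde g_s}$ back to a one-parameter family of compactly supported exact symplectomorphisms of the fixed manifold $M_g$, and then invoke continuation invariance for such families together with conjugation invariance. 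Your route is arguably cleaner to verify because it never leaves the symplectomorphism setting, while the paper's makes the contact-pair formalism do the bookkeeping. The trade is that you must justify the support claims: the Moser isotopy can only be made to fix $\{r \leq \epsilon'\}$ after choosing primitives $\alpha_s$ that vanish there (since the forms agree there, $\alpha_s$ is closed on that set and can be corrected by an exact piece on each fiber), and the horizontal distributions for the $\tilde g_s$ must be cut off compatibly across $s$ so that the conjugated maps are honestly compactly supported, not merely close to it. You flag this correctly as the delicate point; with those details filled in, your argument stands and, in my view, more directly isolates what is actually being used.
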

	
	\begin{proof}
		By tuning $\eta$ to zero, what we actually get is a smooth family of mapping tori. We want $HF^*$ to be invariant under this smooth family. At this point, it is more convenient to work with the classical Milnor fibration where the corresponding objects form a smooth family of Liouville domains where the variation takes place near the contact boundary. We may choose a smooth path in the disk and hence, obtain a smooth 1-parameter family of such Liouville domains between the Milnor fibers of $f$ and $g$. By Gray's stability (see, for example, McDuff-Salamon \cite{McDuffSalamon}, p. 136), when we have a 1-parameter family (in $\tau$) of contact forms $\alpha_\tau$ on a closed manifold, in this case, the link $L$ considered as a smooth submanifold, there exists an isotopy $\psi_\tau$ and a family of functions $f_\tau$ such that $\alpha_\tau = \psi^*_\tau(f_\tau \alpha_0)$ where $\psi_0 = \id$ and $f_0 \equiv 1$. Hence, the $\psi_\tau$ are contactomorphisms since $d\psi_\tau$ maps $\ker \alpha_\tau$ to $\ker \alpha_0$; the scaling by $f_\tau$ doesn't affect the contact structure.
		
		Similarly, we have a family of symplectic forms $\omega_\tau = d\alpha_\tau$ with exact derivative. Hence, we can use the Moser lemma to show that there is an isotopy $\Psi_\tau$ such that $\omega_\tau = \Psi^*_\tau \omega_0$ with $\Psi_0 = \id$ and $\Psi_\tau|_L = \psi_\tau$. Lastly, $HF^*$ is a symplectic invariant and hence, for all $\tau$, the corresponding $HF^*$'s of the $\phi_\tau:(F,\omega_\tau) \to (F,\omega_\tau)$ are all isomorphic. cf. property 2 in McLean's paper.
	\end{proof}
	
	Now, the links are contact submanifolds of the respective $(2n+1)$-spheres. Whenever we have a contact embedding $j:L \hookrightarrow (M,\alpha)$ of a submanifold into a contact manifold $(M,\alpha)$ where $\alpha$ is the contact form, some basic linear algebra shows us that the contact structure $\xi_L$ embeds into $\xi_M$ as a symplectic subbundle. Moreover, the Reeb field for $L$ is the same as the Reeb field for $M$. Hence, $\xi_M = \xi_L \oplus \xi_L^\perp$ and $\xi_L^\perp$ is the normal bundle of $L$ inside of $S^{2n+1}$. Let's now return to our situation where $L$ is the link of an isolated singularity and $M = S^{2n+1}$.
	
	%	\begin{lemma}
		%		$\xi_M \to L$ is a complex vector bundle and admits a complex section.
		%	\end{lemma}
	
	%	\begin{proof}
		%		$\xi_M \to L$ is a rank $2n$ symplectic vector bundle and hence, a complex bundle. It is classified by a map $f:L \to BU(n)$. The existence of a complex section is assured if we can lift $f$ to a map $L \to BU(n-1)$. Basic obstruction theory says that the obstructions live in $H^*(L,\pi_{*-1}S^{2n-1})$. This is because the homotopy fiber is $U(n)/U(n-1) \cong S^{2n-1}$. Hence, the first obstruction is when $* = 2n$. But $H^{2n}(L,\Z) = 0$.
		%	\end{proof}
	
	\begin{lemma} \label{normal}
		Let $f:\C^{n+1} \to \C$ be a holomorphic function with isolated singularity at 0. The complex normal bundle of $F=f^{-1}(0) \setminus 0$ is trivialized by $df$ and hence, the normal bundle of $L_f:=f^{-1}(0) \cap S^{2n+1}_\epsilon$ also has a canonical trivialization induced by $f$.
	\end{lemma}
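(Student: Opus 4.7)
The plan is direct, in two stages matching the two statements. For the complex normal bundle of $F = f^{-1}(0) \setminus \{0\}$, I would use that the singularity of $f$ is isolated at $0$, so $df_p \in T^*_p \C^{n+1}$ is nonzero at every $p \in F$. View $df$ as a bundle map from $T\C^{n+1}|_F$ to the trivial complex line bundle $F \times \C$; it is fiberwise surjective with kernel $T F$, so it descends to an isomorphism of complex line bundles $\overline{df} \colon N_F := T\C^{n+1}|_F / TF \overset{\cong}{\to} F \times \C$. This is the canonical trivialization by $df$.

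For the link $L_f = f^{-1}(0) \cap S^{2n+1}_\epsilon$, I would first invoke Milnor's transversality lemma from \cite{Milnor}: for all sufficiently small $\epsilon > 0$, the sphere $S^{2n+1}_\epsilon$ meets $f^{-1}(0)$ transversely in $\C^{n+1}$, so $L_f$ is a smooth real codimension-$2$ submanifold of $S^{2n+1}_\epsilon$. At each $p \in L_f$, transversality gives $T_p F + T_p S^{2n+1}_\epsilon = T_p \C^{n+1}$ and $T_p L_f = T_p F \cap T_p S^{2n+1}_\epsilon$, so the inclusion $T_p S^{2n+1}_\epsilon \hookrightarrow T_p \C^{n+1}$ descends to a linear isomorphism of real rank-$2$ spaces
\[
N_{L_f / S^{2n+1}_\epsilon}\big|_p \;=\; T_p S^{2n+1}_\epsilon / T_p L_f \;\overset{\cong}{\longrightarrow}\; T_p \C^{n+1} / T_p F \;=\; N_F\big|_p.
\]
Naturality of these quotients in $p$ assembles this into a global bundle isomorphism $N_{L_f / S^{2n+1}_\epsilon} \cong N_F|_{L_f}$. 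Composing with the trivialization $\overline{df}$ from the first paragraph (and identifying $\C \cong \R^2$ as real vector spaces) produces the asserted trivialization of the normal bundle of $L_f$, which manifestly depends only on $f$.

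The main thing to watch, more bookkeeping than obstacle, is not conflating the complex normal bundle of $F$ in $\C^{n+1}$ with the real normal bundle of $L_f$ in $S^{2n+1}_\epsilon$; the bridge between them is exactly the transversality statement above, after which the lemma reduces to the single observation that $df$ is nowhere vanishing off the isolated singular point.
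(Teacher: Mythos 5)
Your argument is correct and follows the same route as the paper: use that $df$ is nowhere zero on $F$ to identify $N_F$ with the trivial line bundle, then use Milnor's transversality of $S^{2n+1}_\epsilon$ with $f^{-1}(0)$ to identify the normal bundle of $L_f$ in the sphere with $N_F|_{L_f}$. You have simply spelled out the linear-algebra step at each point more explicitly than the paper does.
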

	
	\begin{proof}
		Since $f:\C^{n+1} \to \C$ is a holomorphic map and 0 is the only point where $df=0$, then $df:T\C^{n+1} \to T\C$ is surjective everywhere else. Since $F:=f^{-1}(0) \setminus 0$ is a symplectic submanifold and part of the fiber, $df$ maps $TF$ to zero and the normal bundle of $F$ maps surjectively onto $T\C$. In particular, since $L_f = F \cap S^{2n+1}_\epsilon$ is a transversal intersection of $F$ with a regular level set of the norm squared function $|\cdot|^2:\C^{n+1} \to \R$, the normal bundle of $L_f$ in $S^{2n+1}_\epsilon$ is also trivialized.
	\end{proof}
	
	Let $W$ be the portion of $\tilde{g}^{-1}(0)$ whose points have radius between $\epsilon'$ and $\epsilon$. This means that the boundary of $W$ is the union of the links $L_g$ and $L_f$. In \cite{Milnor}, Milnor showed that Milnor fibers are smoothly parallelizable and that is the case here. In fact, a much stronger statement is true: the tangent bundle is algebraically trivial; we'll show why below. Moreover, for a generic fixed radius $r$, the level set $W_r = W \cap S^{2n+1}_r$ is a link with trivial normal bundle in $S^{2n+1}_r$. This shows that in fact, $W$ itself has trivial normal bundle in $B^{2n+2}_\epsilon$ due to the adjacency of $f$ and $g$. We call this the \textbf{canonical normal framing}.
	
	Now, to prove that the tangent bundle of the Milnor fiber is algebraically trivial and hence, also trivial as both a complex and holomorphic bundle, we may use a powerful theorem of Suslin \cite{suslin}. This is much stronger than we need but we wish to advertise this result which may be less familiar to symplectic geometers.
	
	\begin{theorem}
		Every stably trivial vector bundle of rank $n$ on a smooth affine variety of dimension $n$ over an algebraically closed field is algebraically trivial.
	\end{theorem}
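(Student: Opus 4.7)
The statement, due to Suslin, is a substantial theorem in algebraic $K$-theory, and rather than attempt the full argument I will sketch the broad strategy. Writing $R = \mathcal{O}(X)$ for the coordinate ring of the smooth affine $n$-fold $X$ over the algebraically closed field $k$, algebraic vector bundles of rank $n$ on $X$ correspond to finitely generated projective $R$-modules $P$ of rank $n$, and \emph{stably trivial} translates to the existence of some $m \geq 1$ with $P \oplus R^{m} \cong R^{n+m}$. The goal is $P \cong R^n$.

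The first step is to reduce to the borderline case $m=1$ via Bass cancellation, which lets one strip off copies of $R$ one at a time provided the rank of the remaining module strictly exceeds $\dim R$. This cancels all but the last factor, leaving the critical isomorphism $P \oplus R \cong R^{n+1}$. Such an isomorphism is encoded in a unimodular row $v = (a_1, \dots, a_{n+1}) \in R^{n+1}$ (the $a_i$ generate the unit ideal) with $P \cong \ker v$, and freeness of $P$ amounts to completing $v$ to a matrix in $GL_{n+1}(R)$, equivalently to showing that $v$ lies in the $E_{n+1}(R)$-orbit of $(1, 0, \dots, 0)$.

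The technical core is then Suslin's theorem that over an $n$-dimensional smooth algebra over an algebraically closed field, every unimodular row of length $n+1$ is completable. Two ingredients drive the argument: first, the \emph{Suslin factorial trick}, which shows that $(a_1, \dots, a_{n+1})$ and $(a_1^{n!}, a_2, \dots, a_{n+1})$ lie in the same elementary orbit and thereby provides enough flexibility to run a Noether-normalization-style induction on $\dim R$; second, a transversality argument that exploits algebraic closedness to produce generic hyperplane sections of the vanishing loci $V(a_i)$ whose intersection with $X$ has strictly smaller dimension, feeding the inductive step.

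The main obstacle, and the step I would not attempt to reconstruct from scratch, is the patching that assembles these local reductions into a global completability statement, in the spirit of Quillen's solution to Serre's problem. The algebraically closed hypothesis is essential here: it forces the vanishing of the top Chow-theoretic obstruction to splitting off a trivial line bundle, which in the modern reformulation via Morel's $\mathbb{A}^1$-homotopy theory is identified with the Euler class of the bundle in $CH^n(X)$. Once that top obstruction vanishes, a downward induction on rank (splitting off trivial line subbundles one at a time) completes the proof.
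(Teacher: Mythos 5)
The paper does not prove this statement. It is quoted verbatim as a theorem of Suslin (with a citation to \cite{suslin}), and the author explicitly frames it as a tool being ``advertised'' for symplectic geometers rather than a result being established: the only role it plays is to upgrade the stable triviality of the Milnor fiber's tangent bundle (which follows from $TF \oplus \nu \cong \C^{n+1}$ with $\nu$ trivial) to genuine algebraic triviality. So there is no in-paper argument for you to be compared against, and treating this as a citable black box, as the paper does, would have been a perfectly acceptable answer.

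That said, your sketch of Suslin's actual proof is a fair high-level account: Bass cancellation to reduce $P \oplus R^m \cong R^{n+m}$ to the critical case $P \oplus R \cong R^{n+1}$, then the identification of $P$ with the kernel of a unimodular row of length $n+1$, then Suslin's completability theorem for such rows over an $n$-dimensional affine algebra over an algebraically closed field. One small imprecision: the $n!$ lemma does not assert that $(a_1, \dots, a_{n+1})$ and $(a_1^{n!}, a_2, \dots, a_{n+1})$ lie in the \emph{same elementary orbit} --- that would make every unimodular row of length $n+1$ completable over an arbitrary ring, which is false. What it shows is that $(a_1^{n!}, a_2, \dots, a_{n+1})$ is \emph{always} completable, and the role of the smoothness and algebraically-closed hypotheses is to massage an arbitrary unimodular row into one to which this lemma (together with a dimension-lowering Bertini/Noether-normalization step) can be applied. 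Your closing remark about the $\mathbb{A}^1$-homotopy/Euler-class reformulation is an accurate description of the modern perspective but is of course anachronistic relative to Suslin's 1977 argument. Since you flagged the sketch as such and did not claim to reconstruct the patching step, this is a reasonable response to a question that, in the paper's own treatment, is simply a citation.
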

	
	The proof of Lemma \ref{normal} shows that any smooth complex affine hypersurface $F \subset \C^{n+1}$ has a trivial normal bundle $\nu$ (and indeed, the same holds for complete intersections) and hence, has stably trivial tangent bundle since $TF \oplus \nu = \C^{n+1}$. Suslin's theorem then shows that $TF$ is algebraically trivial.
	
	\subsection{A Cobordism for the Milnor Fibration}
	
	Since we showed above that $W$ has trivial normal bundle, then the circle bundle of the normal bundle is $W \times S^1$. The cobordism that we construct will simply be the annulus $B^{2n+2}_\epsilon \setminus B^{2n+2}_{\epsilon'}$ with the normal bundle of $W$ removed. This is a cobordism with corners between the Milnor fibrations; its boundary naturally decomposes into a horizontal and vertical part. $W \times S^1$ is the horizontal part and it is a contact hypersurface because it inherits its contact structure from the mapping torus. This structure is not $\xi = TW$ since that is integrable and stable Hamiltonian but not contact. Figure 2 depicts this; the red circles represent $L_g$ and the blue circles represent $L_f$. The dotted lines denote the boundary and the bolded lines represent $W$. \textit{Caveat lector}: Figure 2 portrays the real parts of a nodal and cuspidal singularity but in general, $L_g,L_f, W$ are all connected. Also, the picture is drawn so that the zero loci appear to be far apart as we move further from the origin but that's for sake of having a picture that isn't too crowded.
	
	\begin{center}
		\tikzset{every picture/.style={line width=0.75pt}} %set default line width to 0.75pt
		
		\begin{tikzpicture}[x=0.75pt,y=0.75pt,yscale=-1,xscale=1]
			%uncomment if require: \path (0,372); %set diagram left start at 0, and has height of 372
			
			%Curve Lines [id:da5420547952716865]
			\draw [line width=1.5]    (355.42,224.86) .. controls (367.33,257.5) and (414.77,206.09) .. (436.57,262.62) ;
			%Curve Lines [id:da24025337540987635]
			\draw [line width=1.5]    (355.54,111.81) .. controls (363.33,86.5) and (375.81,89.52) .. (385.38,89.76) .. controls (394.95,90.01) and (423.33,113.17) .. (433.77,64.84) ;
			%Curve Lines [id:da6739804758408963]
			\draw    (349.22,211.33) .. controls (371.4,256.44) and (381.54,279.12) .. (391.33,301.5) ;
			%Curve Lines [id:da21397622713626885]
			\draw    (316.83,168.1) .. controls (388.19,164.14) and (422.57,211.62) .. (436.57,262.62) ;
			%Curve Lines [id:da8731683817333415]
			\draw    (316.83,168.1) .. controls (396.58,147.36) and (413.33,116.5) .. (432.77,70.4) ;
			%Shape: Ellipse [id:dp1581194322078936]
			\draw  [dash pattern={on 0.84pt off 2.51pt}] (164.33,168.1) .. controls (164.33,83.91) and (232.61,15.66) .. (316.83,15.66) .. controls (401.06,15.66) and (469.33,83.91) .. (469.33,168.1) .. controls (469.33,252.29) and (401.06,320.54) .. (316.83,320.54) .. controls (232.61,320.54) and (164.33,252.29) .. (164.33,168.1) -- cycle ;
			%Shape: Ellipse [id:dp6585130852464707]
			\draw  [dash pattern={on 0.84pt off 2.51pt}] (248.63,168.1) .. controls (248.63,130.45) and (279.16,99.92) .. (316.83,99.92) .. controls (354.5,99.92) and (385.04,130.45) .. (385.04,168.1) .. controls (385.04,205.75) and (354.5,236.28) .. (316.83,236.28) .. controls (279.16,236.28) and (248.63,205.75) .. (248.63,168.1) -- cycle ;
			%Shape: Ellipse [id:dp5335788850778409]
			\draw  [color={rgb, 255:red, 255; green, 0; blue, 0 }  ,draw opacity=1 ][fill={rgb, 255:red, 255; green, 0; blue, 0 }  ,fill opacity=1 ] (350.64,224.86) .. controls (350.64,222.22) and (352.78,220.08) .. (355.42,220.08) .. controls (358.06,220.08) and (360.2,222.22) .. (360.2,224.86) .. controls (360.2,227.5) and (358.06,229.64) .. (355.42,229.64) .. controls (352.78,229.64) and (350.64,227.5) .. (350.64,224.86) -- cycle ;
			%Shape: Ellipse [id:dp13121358083732826]
			\draw  [color={rgb, 255:red, 0; green, 0; blue, 255 }  ,draw opacity=1 ][fill={rgb, 255:red, 0; green, 0; blue, 255 }  ,fill opacity=1 ] (428.99,69.62) .. controls (428.99,66.98) and (431.13,64.84) .. (433.77,64.84) .. controls (436.41,64.84) and (438.55,66.98) .. (438.55,69.62) .. controls (438.55,72.26) and (436.41,74.4) .. (433.77,74.4) .. controls (431.13,74.4) and (428.99,72.26) .. (428.99,69.62) -- cycle ;
			%Curve Lines [id:da5880687424034943]
			\draw    (232.33,135.5) .. controls (255.33,106.5) and (324.78,159.72) .. (349.22,211.33) ;
			%Curve Lines [id:da043699630475921625]
			\draw    (234.33,202.5) .. controls (263.33,228.5) and (326.48,171.98) .. (351.35,119.39) ;
			%Curve Lines [id:da9402739527339292]
			\draw    (351.35,119.39) .. controls (365.8,87.22) and (372.94,78.25) .. (388.33,33.5) ;
			%Shape: Ellipse [id:dp4548857614142561]
			\draw  [color={rgb, 255:red, 255; green, 0; blue, 0 }  ,draw opacity=1 ][fill={rgb, 255:red, 255; green, 0; blue, 0 }  ,fill opacity=1 ] (350.76,111.81) .. controls (350.76,109.17) and (352.9,107.03) .. (355.54,107.03) .. controls (358.18,107.03) and (360.32,109.17) .. (360.32,111.81) .. controls (360.32,114.45) and (358.18,116.59) .. (355.54,116.59) .. controls (352.9,116.59) and (350.76,114.45) .. (350.76,111.81) -- cycle ;
			%Shape: Ellipse [id:dp15238140945376166]
			\draw  [color={rgb, 255:red, 0; green, 0; blue, 255 }  ,draw opacity=1 ][fill={rgb, 255:red, 0; green, 0; blue, 255 }  ,fill opacity=1 ] (431.79,262.62) .. controls (431.79,259.98) and (433.93,257.84) .. (436.57,257.84) .. controls (439.21,257.84) and (441.35,259.98) .. (441.35,262.62) .. controls (441.35,265.26) and (439.21,267.4) .. (436.57,267.4) .. controls (433.93,267.4) and (431.79,265.26) .. (431.79,262.62) -- cycle ;
			%Curve Lines [id:da02062722520202498]
			\draw [line width=0.75]  [dash pattern={on 0.84pt off 2.51pt}]  (364.33,118.17) .. controls (374.58,103.53) and (381.81,102.52) .. (391.38,102.76) .. controls (400.95,103.01) and (430.33,117.17) .. (439.77,77.84) ;
			%Curve Lines [id:da26112729605040985]
			\draw [line width=0.75]  [dash pattern={on 0.84pt off 2.51pt}]  (345.54,105.03) .. controls (364.33,64.5) and (377.76,76.26) .. (387.33,76.5) .. controls (396.91,76.74) and (409.89,95.49) .. (424.33,61.17) ;
			%Curve Lines [id:da5499778128623531]
			\draw [line width=0.75]  [dash pattern={on 0.84pt off 2.51pt}]  (362.42,218.86) .. controls (379.33,243.5) and (420.77,199.09) .. (442.57,255.62) ;
			%Curve Lines [id:da7305752068209586]
			\draw [line width=0.75]  [dash pattern={on 0.84pt off 2.51pt}]  (345.42,231.86) .. controls (368.33,269.5) and (420.33,223.17) .. (429.33,271.17) ;
			%Curve Lines [id:da29975410013770265]
			\draw    (234.33,202.5) .. controls (216.33,189.5) and (218.33,155.5) .. (232.33,135.5) ;
			%Shape: Circle [id:dp011268652518372502]
			\draw  [fill={rgb, 255:red, 0; green, 0; blue, 0 }  ,fill opacity=1 ] (315.58,167.85) .. controls (315.58,166.61) and (316.59,165.6) .. (317.83,165.6) .. controls (319.08,165.6) and (320.08,166.61) .. (320.08,167.85) .. controls (320.08,169.09) and (319.08,170.1) .. (317.83,170.1) .. controls (316.59,170.1) and (315.58,169.09) .. (315.58,167.85) -- cycle ;
		\end{tikzpicture}
		
		\caption{Figure 2: A cobordism between Milnor fibrations}
	\end{center}
	
	\subsection{A Cobordism for the Mapping Tori}\label{cobordism}
	
	However, a more convenient cobordism is to use the mapping torus model of the Milnor fibration. Recall that we had a small $\eta$ from above where $g=f_\eta$ and we modified $g$ to obtain $\tilde{g} = g+(f-g)\beta$. We can construct a cobordism using the mapping torus model: $E=(B_\epsilon \cap f^{-1}(\bar{D}_{\delta_1})) \setminus \tilde{g}^{-1}(\mathring{D}_{\delta_0})$. Here, $0 < \delta_0 \ll \delta_1 \ll \epsilon$ and both $f$ and $\tilde{g}$ are submersions on $E$. Observe that this is also a symplectic manifold with corners; the symplectic form is inherited from $\C^{n+1}$ and the boundary decomposes into horizontal and vertical part.
	
	The vertical boundary is the union of the mapping tori of $f$ and $\tilde{g}$. We also have a symplectic collar neighborhood of the horizontal boundary because of how we constructed $\tilde{g}$.
	
	\begin{center}
		\tikzset{every picture/.style={line width=0.75pt}} %set default line width to 0.75pt        
		\begin{tikzpicture}[x=0.75pt,y=0.75pt,yscale=-1,xscale=1]
			%uncomment if require: \path (0,349); %set diagram left start at 0, and has height of 349
			
			%Shape: Ellipse [id:dp24374821846807682] 
			\draw  [dash pattern={on 0.84pt off 2.51pt}] (221,36.75) .. controls (221,21.42) and (267.41,9) .. (324.67,9) .. controls (381.92,9) and (428.33,21.42) .. (428.33,36.75) .. controls (428.33,52.08) and (381.92,64.5) .. (324.67,64.5) .. controls (267.41,64.5) and (221,52.08) .. (221,36.75) -- cycle ;
			%Curve Lines [id:da06790648651918696] 
			\draw  [dash pattern={on 0.84pt off 2.51pt}]  (439.67,204.5) .. controls (453.33,165.5) and (409.33,80.5) .. (428.33,36.75) ;
			%Curve Lines [id:da20816205512679198] 
			\draw  [dash pattern={on 0.84pt off 2.51pt}]  (232.33,204.5) .. controls (251.33,160.5) and (202,80.5) .. (221,36.75) ;
			%Shape: Ellipse [id:dp0444441153884918] 
			\draw  [dash pattern={on 0.84pt off 2.51pt}] (232.33,204.5) .. controls (232.33,189.17) and (278.75,176.75) .. (336,176.75) .. controls (393.25,176.75) and (439.67,189.17) .. (439.67,204.5) .. controls (439.67,219.83) and (393.25,232.25) .. (336,232.25) .. controls (278.75,232.25) and (232.33,219.83) .. (232.33,204.5) -- cycle ;
			%Shape: Ellipse [id:dp9526452298757178] 
			\draw  [dash pattern={on 0.84pt off 2.51pt}] (268.58,36.75) .. controls (268.58,29.78) and (293.69,24.13) .. (324.67,24.13) .. controls (355.64,24.13) and (380.75,29.78) .. (380.75,36.75) .. controls (380.75,43.72) and (355.64,49.38) .. (324.67,49.38) .. controls (293.69,49.38) and (268.58,43.72) .. (268.58,36.75) -- cycle ;
			%Shape: Ellipse [id:dp16383012150859466] 
			\draw  [dash pattern={on 0.84pt off 2.51pt}] (279.92,204.5) .. controls (279.92,197.53) and (305.03,191.88) .. (336,191.88) .. controls (366.97,191.88) and (392.08,197.53) .. (392.08,204.5) .. controls (392.08,211.47) and (366.97,217.13) .. (336,217.13) .. controls (305.03,217.13) and (279.92,211.47) .. (279.92,204.5) -- cycle ;
			%Curve Lines [id:da13939771510494858] 
			\draw  [dash pattern={on 0.84pt off 2.51pt}]  (268.58,36.75) .. controls (313.33,90.5) and (256.33,155.5) .. (279.92,204.5) ;
			%Curve Lines [id:da1991715281581663] 
			\draw  [dash pattern={on 0.84pt off 2.51pt}]  (380.75,36.75) .. controls (413.33,154.5) and (368.5,155.5) .. (392.08,204.5) ;
			%Shape: Ellipse [id:dp4931682884429913] 
			\draw   (232.33,308) .. controls (232.33,292.67) and (278.75,280.25) .. (336,280.25) .. controls (393.25,280.25) and (439.67,292.67) .. (439.67,308) .. controls (439.67,323.33) and (393.25,335.75) .. (336,335.75) .. controls (278.75,335.75) and (232.33,323.33) .. (232.33,308) -- cycle ;
			%Shape: Ellipse [id:dp5432773217987334] 
			\draw   (279.92,308) .. controls (279.92,301.03) and (305.03,295.38) .. (336,295.38) .. controls (366.97,295.38) and (392.08,301.03) .. (392.08,308) .. controls (392.08,314.97) and (366.97,320.63) .. (336,320.63) .. controls (305.03,320.63) and (279.92,314.97) .. (279.92,308) -- cycle ;
			%Straight Lines [id:da47010186120847886] 
			\draw    (336,239.25) -- (336,272.25) ;
			\draw [shift={(336,274.25)}, rotate = 270] [color={rgb, 255:red, 0; green, 0; blue, 0 }  ][line width=0.75]    (10.93,-3.29) .. controls (6.95,-1.4) and (3.31,-0.3) .. (0,0) .. controls (3.31,0.3) and (6.95,1.4) .. (10.93,3.29)   ;
			%Shape: Ellipse [id:dp10948276888675768] 
			\draw   (284.33,107.75) .. controls (284.33,100.78) and (308.51,95.13) .. (338.33,95.13) .. controls (368.16,95.13) and (392.33,100.78) .. (392.33,107.75) .. controls (392.33,114.72) and (368.16,120.38) .. (338.33,120.38) .. controls (308.51,120.38) and (284.33,114.72) .. (284.33,107.75) -- cycle ;
			%Curve Lines [id:da3574546404542658] 
			\draw [line width=1.5]    (268.58,36.75) .. controls (283.33,55.83) and (290.33,83.83) .. (284.33,107.75) ;
			%Shape: Circle [id:dp3071105744152871] 
			\draw  [color={rgb, 255:red, 0; green, 0; blue, 255 }  ,draw opacity=1 ][fill={rgb, 255:red, 0; green, 0; blue, 255 }  ,fill opacity=1 ] (266.58,36.75) .. controls (266.58,34.49) and (268.41,32.67) .. (270.67,32.67) .. controls (272.92,32.67) and (274.75,34.49) .. (274.75,36.75) .. controls (274.75,39.01) and (272.92,40.83) .. (270.67,40.83) .. controls (268.41,40.83) and (266.58,39.01) .. (266.58,36.75) -- cycle ;
			%Shape: Circle [id:dp1560319709664637] 
			\draw  [color={rgb, 255:red, 255; green, 0; blue, 0 }  ,draw opacity=1 ][fill={rgb, 255:red, 255; green, 0; blue, 0 }  ,fill opacity=1 ] (280.25,107.75) .. controls (280.25,105.49) and (282.08,103.67) .. (284.33,103.67) .. controls (286.59,103.67) and (288.42,105.49) .. (288.42,107.75) .. controls (288.42,110.01) and (286.59,111.83) .. (284.33,111.83) .. controls (282.08,111.83) and (280.25,110.01) .. (280.25,107.75) -- cycle ;
			
			% Text Node
			\draw (343,238.4) node [anchor=north west][inner sep=0.75pt]    {$\tilde{g}$};
		\end{tikzpicture}
		
		\caption{Figure 3: A cobordism between mapping tori}
	\end{center}
	
	\begin{remark}
		If we draw a tube around the nodal cubic $g^{-1}(0)$, then the picture suggests that it should form a real hypersurface that intersects itself. This immersed hypersurface, however, is \emph{not} the mapping torus of the monodromy map on the corresponding Milnor fibration.
		
		Another concern one may have is that the cuspidal cubic $f^{-1}(0)$ intersects the mapping torus associated to $g$ and hence, the two mapping tori intersect. However, because they are adjacent, $f,g$ are $C^1$-close and so, for small constants $\delta_0,\delta_1,\epsilon$ as above, the mapping tori do not intersect and $E$ is an honest manifold-with-corners. For concreteness, we used these examples but this discussion is true for any pair of adjacent singularities.
	\end{remark}
	
	Now, the situation above was such that we constructed a cobordism for the monodromy map $\phi$. However, if we want to construct cobordisms for higher iterates of $\phi$, we cannot use $\C^{n+1}$ but rather some branched covers. Again, consider $f:\C^{n+1} \to \C$ such that $f(0)=0$ is the only singularity of $f$ contained in $f^{-1}(0)$ and let $\phi$ be the associated monodromy obtained by symplectic parallel transport. We can construct more mapping tori by just taking iterates of the monodromy map. These tori no longer embed into $\mathbb{C}^n$ but do embed into $\C^{n+1}$ as follows. Consider $\C^n \setminus V(f)$, the complement of the zero locus of $f$. Then, it is isomorphic to the affine variety $V(wf-1) \subset \C^{n+1}$ where we've added a variable $w$ and so the coordinates are $(w,z_1,...,z_n)$ and the map $(w,\vec{z}) \mapsto \vec{z}$ is the isomorphism since, we see that $wf(\vec{z})=1$ implies that neither $w$ nor $f$ vanish on this set and $w$ is completely determined by $f$. This variety is $k$-fold covered by $V(w^kf-1) \subset \C^{n+1}$ via the map $(w,\vec{z}) \mapsto (w^k,\vec{z})$ without any branching. Since the mapping torus for the monodromy can now be viewed as embedded in $V(wf-1)$ via the isomorphism above, we can then take the preimage of this mapping torus to get something living in $V(w^kf-1)$ which is abstractly, the mapping torus for the $k$th iterate of the monodromy.
	
	So when we speak of a \textbf{branched mapping torus model}, we mean this particular model living in a branched covering of $\C^n$. Later on, we wish to consider a cobordism between two mapping tori; one knows that these exist since we can embed the mapping tori in some variety in $\C^{n+1}$ and then take the ``space between them.''
	
	\section{An Induced Map on $HF^*$ given by Adjacencies} \label{map}
	
	Having constructed a cobordism, we would now like to construct a map on $HF^*$. Since the Milnor fiber $M_g$ embeds into the Milnor fiber $M_f$, we would like to construct a chain map $CF^*(\check{\phi}_f) \to CF^*(\check{\phi}_g)$ where we view the latter chain complex as a quotient of the former chain complex. In other words, if $[f] \to [g]$, $HF^*$ behaves covariantly when giving us a map $HF^*(\phi^k_f) \to HF^*(\phi^k_g)$.
	
	In order to do this, we need to modify the Milnor fiber of $\tilde{g}$ (which emebds into $M_f$) so that the fixed points of $\phi_g$ are seperated from the fixed points of $\phi_f$. On the other hand, we do not want to change the exact symplectomorphism type of $M_{\tilde{g}}$. Since we may view $M_g \subset M_{\tilde{g}} \subset M_f$, the monodromy $\phi_g$ does not have fixed points in a small tubular neighborhood of $L_g$ as it is compactly supported away from the boundary of $M_g$.
	
	The key idea is inspired by neck-stretching (see, for example, \cite{Wendl}) though we emphasize that this is not neck-stretching in the technical sense of considering a sequence of almost complex structures on longer and longer necks. However, we do glue in something like the symplectization of the link $L_g$. Hence, we'll call this \textbf{neck lengthening}. Since $L_g$ is a contact hypersurface in $M_{\tilde{g}}$ and it separates $M_{\tilde{g}}$, we will neck lengthen the small tubular neighborhood $(-\epsilon,\epsilon) \times L_g$. We view $M_{\tilde{g}} = M_{\tilde{g}}^- \cup_{L_g} M_{\tilde{g}}^+$ where $M_{\tilde{g}}^+$ contains the original boundary of $M_{\tilde{g}}$. A neighborhood of $L_g$ in $(M_{\tilde{g}}, \omega=d\theta)$ can be identified symplectically with $(N_\epsilon, \omega_\epsilon) := ((-\epsilon, \epsilon) \times L_g, d(r\theta) + \omega)$ for sufficiently small $\epsilon > 0$ (different constant from before). We then replace $N_\epsilon$ with larger collars of the form $((-T, T) \times L_g, d (q(r)\theta) + \omega)$, with $C^0$-small function $q$ chosen with $q' > 0$ so that the collar can be glued in smoothly to replace $(N_\epsilon, \omega_\epsilon)$; we want $q'$ to also be $C^1$-small on $(-T+1,T-1)$. This collar (or more commonly, neck) is somewhat like the symplectization of $(L_g, \theta|_{L_g})$. The symplectic manifolds constructed in this way are all exact symplectomorphic. See Figure 4.
	
	\begin{center}
		\tikzset{every picture/.style={line width=0.75pt}} %set default line width to 0.75pt        
		\begin{tikzpicture}[x=0.75pt,y=0.75pt,yscale=-1,xscale=1]
			%uncomment if require: \path (0,300); %set diagram left start at 0, and has height of 300
			%Curve Lines [id:da6396681566647202] 
			\draw    (394.33,87.5) .. controls (468.33,86.5) and (448.33,129.5) .. (492.33,129.5) ;
			%Curve Lines [id:da7871289507690344] 
			\draw    (32.33,164.5) .. controls (38.33,194.5) and (80.33,196.5) .. (126.33,196.5) ;
			%Curve Lines [id:da6516390799404674] 
			\draw    (32.33,118.5) .. controls (38.33,88.5) and (79.33,86.5) .. (126.33,87.5) ;
			%Curve Lines [id:da6357990268499514] 
			\draw    (32.33,164.5) .. controls (28.33,154.5) and (29.33,130.5) .. (32.33,118.5) ;
			%Curve Lines [id:da07290102109946894] 
			\draw    (86,148) .. controls (85.33,160.5) and (106.33,160.5) .. (126.33,160.5) ;
			%Curve Lines [id:da22800881145767393] 
			\draw    (86,148) .. controls (85.33,134.5) and (111.33,133.5) .. (126.33,133.5) ;
			%Shape: Ellipse [id:dp8426963087650328] 
			\draw   (485.83,147) .. controls (485.83,137.34) and (488.74,129.5) .. (492.33,129.5) .. controls (495.92,129.5) and (498.83,137.34) .. (498.83,147) .. controls (498.83,156.66) and (495.92,164.5) .. (492.33,164.5) .. controls (488.74,164.5) and (485.83,156.66) .. (485.83,147) -- cycle ;
			%Curve Lines [id:da3258533173053686] 
			\draw    (394.33,196.5) .. controls (468.33,196.5) and (449.33,163.5) .. (492.33,164.5) ;
			%Shape: Ellipse [id:dp049322668669943726] 
			\draw  [dash pattern={on 0.84pt off 2.51pt}] (118.33,110.5) .. controls (118.33,97.8) and (121.92,87.5) .. (126.33,87.5) .. controls (130.75,87.5) and (134.33,97.8) .. (134.33,110.5) .. controls (134.33,123.2) and (130.75,133.5) .. (126.33,133.5) .. controls (121.92,133.5) and (118.33,123.2) .. (118.33,110.5) -- cycle ;
			%Shape: Ellipse [id:dp23389642134858413] 
			\draw  [dash pattern={on 0.84pt off 2.51pt}] (118.83,178.5) .. controls (118.83,168.56) and (122.19,160.5) .. (126.33,160.5) .. controls (130.48,160.5) and (133.83,168.56) .. (133.83,178.5) .. controls (133.83,188.44) and (130.48,196.5) .. (126.33,196.5) .. controls (122.19,196.5) and (118.83,188.44) .. (118.83,178.5) -- cycle ;
			%Straight Lines [id:da036393713860823684] 
			\draw    (126.33,133.5) -- (187.33,133.5) ;
			%Shape: Ellipse [id:dp1961773175290551] 
			\draw  [dash pattern={on 0.84pt off 2.51pt}] (299.33,110.5) .. controls (299.33,97.8) and (302.92,87.5) .. (307.33,87.5) .. controls (311.75,87.5) and (315.33,97.8) .. (315.33,110.5) .. controls (315.33,123.2) and (311.75,133.5) .. (307.33,133.5) .. controls (302.92,133.5) and (299.33,123.2) .. (299.33,110.5) -- cycle ;
			%Shape: Ellipse [id:dp35544726396463] 
			\draw  [dash pattern={on 0.84pt off 2.51pt}] (299.83,178.5) .. controls (299.83,168.56) and (303.19,160.5) .. (307.33,160.5) .. controls (311.48,160.5) and (314.83,168.56) .. (314.83,178.5) .. controls (314.83,188.44) and (311.48,196.5) .. (307.33,196.5) .. controls (303.19,196.5) and (299.83,188.44) .. (299.83,178.5) -- cycle ;
			%Straight Lines [id:da7420578995689722] 
			\draw    (307.33,87.5) -- (394.33,87.5) ;
			%Straight Lines [id:da838151478020321] 
			\draw    (307.33,196.5) -- (394.33,196.5) ;
			%Straight Lines [id:da49058426494640894] 
			\draw    (307.33,133.5) -- (394.33,133.5) ;
			%Straight Lines [id:da1839593071787735] 
			\draw    (307.33,160.5) -- (394.33,160.5) ;
			%Curve Lines [id:da9963552519350567] 
			\draw    (394.33,133.5) .. controls (410.33,132.5) and (412.33,139.5) .. (413.33,148.5) ;
			%Curve Lines [id:da49236706840602573] 
			\draw    (394.33,160.5) .. controls (403.33,158.5) and (413.33,157.5) .. (413.33,148.5) ;
			%Straight Lines [id:da6478256750338136] 
			\draw    (126.33,160.5) -- (187.33,160.5) ;
			%Straight Lines [id:da5138407694331215] 
			\draw    (126.33,87.5) -- (187.33,87.5) ;
			%Straight Lines [id:da800640636860108] 
			\draw    (126.33,196.5) -- (187.33,196.5) ;
			%Straight Lines [id:da7924865458339465] 
			\draw    (246.33,196.5) -- (307.33,196.5) ;
			%Straight Lines [id:da16702591456848537] 
			\draw    (246.33,160.5) -- (307.33,160.5) ;
			%Straight Lines [id:da045931434765150714] 
			\draw    (246.33,133.5) -- (307.33,133.5) ;
			%Straight Lines [id:da2959671540775435] 
			\draw    (246.33,87.5) -- (307.33,87.5) ;
			
			% Text Node
			\draw (48,133.4) node [anchor=north west][inner sep=0.75pt]    {$M_{g}$};
			% Text Node
			\draw (426,133.4) node [anchor=north west][inner sep=0.75pt]    {$M_{f}$};
			% Text Node
			\draw (210,99) node [anchor=north west][inner sep=0.75pt]   [align=left] {...};
			% Text Node
			\draw (211,168) node [anchor=north west][inner sep=0.75pt]   [align=left] {...};
		\end{tikzpicture}
		
		\caption{Figure 4: Neck Lengthening}
	\end{center}
	\vspace{3mm}
	
	Next, we modify the function $-F_{\phi_{\tilde{g}}}$ which defines the action for $\phi_{\tilde{g}}$. Let $A_g$ be the lowest action of fixed points contained in $M^-_{\tilde{g}}$ and $A_f$ be the highest action of fixed points contained in $M^+_{\tilde{g}}$. We then define a smooth function $-\widetilde{F}$ to equal $-F_{\phi_{\tilde{g}}}|_{M_{\tilde{g}}^-} + |A_f - A_g| + 1$ on $M_{\tilde{g}}^-$. We glue in a neck $(-(T+1),T+1) \times L_g$ such that $-\widetilde{F} = -Cr$ on $(-T,T) \times L_g$ for some $C >0$; i.e. is linear. The symplectic structure on the neck is $d(q(r)\theta) +d\theta= q'(r)dr \wedge\theta + (q(r)+1)d\theta$ and on the part where $F=-Cr$ is linear, the Hamiltonian vector field is simply $\pm \frac{C}{q'(r)} \cdot X_R$, where $X_R$ is the Reeb vector field. Below is a schematic picture where we split along the separating hypersurface $L_g$ and add in a cylinder on which there is a function $-\widetilde{F}=-Cr$.
	
	\begin{center}
		\tikzset{every picture/.style={line width=0.75pt}} %set default line width to 0.75pt        
		\begin{tikzpicture}[x=0.75pt,y=0.75pt,yscale=-1,xscale=1]
			%uncomment if require: \path (0,300); %set diagram left start at 0, and has height of 300
			
			%Straight Lines [id:da6974828657433427] 
			\draw    (100,70) -- (180,70) ;
			%Straight Lines [id:da25101210936747687] 
			\draw    (100,110) -- (180,110) ;
			%Straight Lines [id:da30604879238378624] 
			\draw  [dash pattern={on 0.84pt off 2.51pt}]  (140,70) -- (140,110) ;
			%Straight Lines [id:da7032589825973263] 
			\draw    (200,90) -- (238,90) ;
			\draw [shift={(240,90)}, rotate = 180] [color={rgb, 255:red, 0; green, 0; blue, 0 }  ][line width=0.75]    (10.93,-3.29) .. controls (6.95,-1.4) and (3.31,-0.3) .. (0,0) .. controls (3.31,0.3) and (6.95,1.4) .. (10.93,3.29)   ;
			%Straight Lines [id:da545638771802325] 
			\draw    (260,70) -- (300,70) ;
			%Straight Lines [id:da005562811122439948] 
			\draw  [dash pattern={on 0.84pt off 2.51pt}]  (300,70) -- (300,110) ;
			%Straight Lines [id:da20309274979717462] 
			\draw    (260,110) -- (300,110) ;
			%Straight Lines [id:da2660503128664067] 
			\draw    (510,110) -- (550,110) ;
			%Straight Lines [id:da4241241909004845] 
			\draw  [dash pattern={on 0.84pt off 2.51pt}]  (510,70) -- (510,110) ;
			%Straight Lines [id:da13165776361545256] 
			\draw    (510,70) -- (550,70) ;
			%Straight Lines [id:da8809451732357261] 
			\draw    (310,130) -- (500,140) ;
			%Straight Lines [id:da8701822600033844] 
			\draw    (300,130) -- (310,130) ;
			%Straight Lines [id:da5690249598570762] 
			\draw    (500,140) -- (510,140) ;
			%Straight Lines [id:da22728641239740122] 
			\draw    (310,70) -- (500,70) ;
			%Straight Lines [id:da9596851054118256] 
			\draw    (310,110) -- (500,110) ;
			%Straight Lines [id:da1607242692161992] 
			\draw  [dash pattern={on 0.84pt off 2.51pt}]  (310,70) -- (310,110) ;
			%Straight Lines [id:da7091132035656531] 
			\draw  [dash pattern={on 0.84pt off 2.51pt}]  (500,70) -- (500,110) ;
			
			% Text Node
			\draw (378,52.4) node [anchor=north west][inner sep=0.75pt]  [font=\small]  {$( -T,T)$};
			% Text Node
			\draw (381,142.4) node [anchor=north west][inner sep=0.75pt]  [font=\small]  {$-Cr$};
			% Text Node
			\draw (132,117.4) node [anchor=north west][inner sep=0.75pt]    {$L_{g}$};
		\end{tikzpicture}
	\end{center}
	
	The constant $C$ is chosen to satisfy the requirement that $-\widetilde{F} = -F_{\phi_{\tilde{g}}}$ restricted to $M_{\tilde{g}}^+$ and that $C/q'(r)$ be smaller than $P$, the period of the smallest nonconstant Reeb orbit (remember $q'(r)>0$).
	
	In $(-(T+1),-T) \times L_g$ and $(T,T+1) \times L_g$, $-\tilde{F}$ is required to have negatives derivative in the $r$ coordinate and to be $C^\infty$-small. This whole construction can be viewed as choosing a Hamiltonian $H$ such that $-\widetilde{F} = -F_{\phi_{\tilde{g}}} + H$. By choosing a large $T$, this separates the actions of orbits of $f$ and of $g$ since we can basically shift the actions of the orbits of $f$ by at least $-2C(T-1)$ and $T$ can be arbitrarily large. Hence, the actions of the fixed points of $\phi_g$ when using $-\widetilde{F}$ are all greater than the actions of the fixed points of $\phi_f$ when $T$ is large. Below is a picture which basically combines the previous mapping torus cobordism picture with this kind of neck lengthening but we've flipped everything over (because we're using $-\widetilde{F}$ rather than $+\widetilde{F}$). The outer torus for the Milnor fiber of $\tilde{g}$ is ``wider'' to show that we've lengthened and the dashed line is meant to show that the actions of the fixed points for $\phi_{\tilde{g}}$ are all higher than before. The region between the two shaded disks is irrelevant in this situation and one could excise it if preferred.
	
	\begin{center}

		\tikzset{every picture/.style={line width=0.75pt}} %set default line width to 0.75pt        
		
		\begin{tikzpicture}[x=0.75pt,y=0.75pt,yscale=-1,xscale=1]
			%uncomment if require: \path (0,349); %set diagram left start at 0, and has height of 349
			
			%Curve Lines [id:da03902398664157358] 
			\draw  [dash pattern={on 0.84pt off 2.51pt}]  (392.78,272.06) .. controls (397.66,256.02) and (386.93,236.1) .. (379.81,220.15) ;
			%Straight Lines [id:da5252653174785189] 
			\draw [line width=0.75]  [dash pattern={on 4.5pt off 4.5pt}]  (538.31,150.99) -- (379.85,179.57) ;
			%Curve Lines [id:da20816205512679198] 
			\draw  [dash pattern={on 0.84pt off 2.51pt}]  (538.97,81.65) .. controls (520.29,125.79) and (566.87,227.03) .. (548.19,270.92) ;
			%Shape: Ellipse [id:dp0444441153884918] 
			\draw  [dash pattern={on 0.84pt off 2.51pt}] (538.97,81.65) .. controls (539.08,96.97) and (444.48,110.09) .. (327.68,110.95) .. controls (210.87,111.8) and (116.09,100.07) .. (115.98,84.75) .. controls (115.87,69.42) and (210.46,56.3) .. (327.27,55.45) .. controls (444.07,54.59) and (538.85,66.32) .. (538.97,81.65) -- cycle ;
			%Shape: Ellipse [id:dp9526452298757178] 
			\draw  [fill={rgb, 255:red, 0; green, 0; blue, 0 }  ,fill opacity=0.1 ][dash pattern={on 0.84pt off 2.51pt}] (380.81,220.14) .. controls (380.86,227.12) and (355.8,232.95) .. (324.82,233.18) .. controls (293.85,233.41) and (268.7,227.94) .. (268.65,220.97) .. controls (268.6,213.99) and (293.66,208.16) .. (324.64,207.93) .. controls (355.61,207.7) and (380.76,213.17) .. (380.81,220.14) -- cycle ;
			%Shape: Ellipse [id:dp16383012150859466] 
			\draw  [dash pattern={on 0.84pt off 2.51pt}] (383.55,82.79) .. controls (383.61,89.76) and (358.54,95.6) .. (327.57,95.82) .. controls (296.59,96.05) and (271.44,90.58) .. (271.39,83.61) .. controls (271.34,76.64) and (296.41,70.8) .. (327.38,70.57) .. controls (358.35,70.35) and (383.5,75.81) .. (383.55,82.79) -- cycle ;
			%Curve Lines [id:da13939771510494858] 
			\draw  [dash pattern={on 0.84pt off 2.51pt}]  (380.81,220.14) .. controls (366.39,163.25) and (412.22,138.91) .. (383.55,82.79) ;
			%Curve Lines [id:da1991715281581663] 
			\draw  [dash pattern={on 0.84pt off 2.51pt}]  (268.65,220.97) .. controls (249.36,170.69) and (295.33,132.43) .. (271.39,83.61) ;
			%Shape: Ellipse [id:dp10948276888675768] 
			\draw   (379.85,179.57) .. controls (379.9,186.54) and (354.12,192.38) .. (322.27,192.61) .. controls (290.43,192.85) and (264.57,187.38) .. (264.52,180.41) .. controls (264.47,173.44) and (290.24,167.6) .. (322.09,167.36) .. controls (353.94,167.13) and (379.8,172.59) .. (379.85,179.57) -- cycle ;
			%Curve Lines [id:da3574546404542658] 
			\draw [line width=1.5]    (380.81,220.14) .. controls (373.69,203.2) and (379.6,191.15) .. (379.85,179.57) ;
			%Shape: Circle [id:dp3071105744152871] 
			\draw  [color={rgb, 255:red, 0; green, 0; blue, 255 }  ,draw opacity=1 ][fill={rgb, 255:red, 0; green, 0; blue, 255 }  ,fill opacity=1 ] (382.89,220.13) .. controls (382.91,222.38) and (381.1,224.23) .. (378.84,224.24) .. controls (376.59,224.26) and (374.74,222.44) .. (374.73,220.19) .. controls (374.71,217.93) and (376.53,216.09) .. (378.78,216.08) .. controls (381.04,216.06) and (382.88,217.87) .. (382.89,220.13) -- cycle ;
			%Shape: Circle [id:dp1560319709664637] 
			\draw  [color={rgb, 255:red, 255; green, 0; blue, 0 }  ,draw opacity=1 ][fill={rgb, 255:red, 255; green, 0; blue, 0 }  ,fill opacity=1 ] (383.93,179.54) .. controls (383.95,181.79) and (382.13,183.63) .. (379.88,183.65) .. controls (377.62,183.67) and (375.78,181.85) .. (375.76,179.6) .. controls (375.75,177.34) and (377.56,175.5) .. (379.82,175.48) .. controls (382.07,175.47) and (383.91,177.28) .. (383.93,179.54) -- cycle ;
			%Shape: Circle [id:dp22132568933414576] 
			\draw  [color={rgb, 255:red, 255; green, 0; blue, 0 }  ,draw opacity=1 ][fill={rgb, 255:red, 255; green, 0; blue, 0 }  ,fill opacity=1 ] (542.39,150.96) .. controls (542.41,153.21) and (540.59,155.05) .. (538.34,155.07) .. controls (536.08,155.09) and (534.24,153.27) .. (534.23,151.02) .. controls (534.21,148.76) and (536.02,146.92) .. (538.28,146.9) .. controls (540.53,146.89) and (542.38,148.7) .. (542.39,150.96) -- cycle ;
			%Shape: Ellipse [id:dp553613249773824] 
			\draw  [dash pattern={on 0.84pt off 2.51pt}] (548.19,270.92) .. controls (548.3,286.24) and (453.7,299.36) .. (336.9,300.22) .. controls (220.09,301.07) and (125.31,289.34) .. (125.2,274.02) .. controls (125.09,258.69) and (219.69,245.58) .. (336.49,244.72) .. controls (453.3,243.86) and (548.08,255.59) .. (548.19,270.92) -- cycle ;
			%Curve Lines [id:da28164210780805377] 
			\draw  [dash pattern={on 0.84pt off 2.51pt}]  (115.98,84.75) .. controls (97.3,128.89) and (143.88,230.13) .. (125.2,274.02) ;
			%Shape: Ellipse [id:dp3235544006131379] 
			\draw  [fill={rgb, 255:red, 0; green, 0; blue, 0 }  ,fill opacity=0.1 ][dash pattern={on 0.84pt off 2.51pt}] (392.78,272.06) .. controls (392.83,279.03) and (367.76,284.87) .. (336.79,285.09) .. controls (305.81,285.32) and (280.66,279.85) .. (280.61,272.88) .. controls (280.56,265.91) and (305.63,260.07) .. (336.6,259.84) .. controls (367.57,259.62) and (392.72,265.09) .. (392.78,272.06) -- cycle ;
			%Curve Lines [id:da8259846782311808] 
			\draw  [dash pattern={on 0.84pt off 2.51pt}]  (280.61,272.88) .. controls (285.49,256.84) and (273.76,236.93) .. (266.65,220.98) ;
			%Straight Lines [id:da9170365986031928] 
			\draw    (592,300.22) -- (592,57.22) ;
			\draw [shift={(592,55.22)}, rotate = 90] [color={rgb, 255:red, 0; green, 0; blue, 0 }  ][line width=0.75]    (10.93,-3.29) .. controls (6.95,-1.4) and (3.31,-0.3) .. (0,0) .. controls (3.31,0.3) and (6.95,1.4) .. (10.93,3.29)   ;
			
			% Text Node
			\draw (570,76.4) node [anchor=north west][inner sep=0.75pt]    {$\mathcal{A}$};
		\end{tikzpicture}
	\end{center}
	
	Let $a$ be a value between these two sets of actions. Above, we chose $C$ to be smaller than the period of the smallest periodic Reeb orbit of $\theta|_{L_g}$. In other words, we think of $H$ as giving a Hamiltonian symplectomorphism of small positive slope. Then, we have a perturbation $\check{\phi}_{\tilde{g}}^N$ equal to the composition of $\phi_{\tilde{g}}$ with this $C^\infty$-small Hamiltonian symplectomorphism of small positive slope. The $N$ stands for neck lengthening. This $CF^*(\check{\phi}_{\tilde{g}}^N)$ is isomorphic to $CF^*(\check{\phi}_{\tilde{g}})$ where $\check{\phi}_{\tilde{g}}^N$ is a small positive slope perturbation as in the definition without neck lengthening. This is because neck lengthening does not create or destroy orbits and counts of $J_t$-holomorphic curves are unchanged so long as $J_t$ is cylindrical on the neck.
	
	With this in hand, the action filtration defines a subcomplex $CF^*_{\leq a}(\check{\phi}_{\tilde{g}}^N)$ where the generators have action less than $a$. These generators are precisely those in $M^+_f$ and $CF^*(\check{\phi}_g) \cong CF^*(\check{\phi}_{\tilde{g}}^N)/CF^*_{\leq a}(\check{\phi}_{\tilde{g}}^N)$. 
	
	We may now define a chain map $\Psi:CF^*(\check{\phi}_f) \to CF^*(\check{\phi}_{\tilde{g}}) \cong CF^*(\check{\phi}_{\tilde{g}}^N)/CF^*_{\leq a}(\check{\phi}_{\tilde{g}}^N)$. The idea is to use the cobordism between mapping tori $E$ from before and make a count of trajectories. Then we use an exact symplectomorphism $\chi: M_{\tilde{g}} \to M_{\tilde{g}}^N$ between the Milnor fiber and the neck lengthened Milnor fiber to get an associated isomorphism between mapping tori $T(\phi_{\tilde{g}}) \to T(\chi \phi_{\tilde{g}} \chi^{-1})$. This ensures that the chain map respects the action filtration. Lastly, we quotient by the subcomplex $CF^*_{\leq a}(\check{\phi}_{\tilde{g}}^N)$. Thus, the next step is to clarify how to count trajectories in $E$ and in particular, establish compactness results to ensure finite counts.
	
	Since $E \subset \C^{n+1}$, it has symplectic and almost complex structures. For a generic choice of cylindrical almost complex structure $J$, consider a map $u:\R \times \R/\Z \to E$ satisfying $\partial_s u + J \partial_t u = 0$ and $\lim_{s \to \pm \infty} u(s,t) = \gamma_\pm(t)$ where $\gamma_+$ is a simple Reeb orbit of the monodromy of $\phi_f$--it corresponds to a fixed point $p_+$--and similary $\gamma_-$ is a Reeb orbit of the monodromy of $\phi_g$ corresponding to a fixed point $p_-$. As will be shown below, because of a certain $J$-convex hypersurface in $E$ near the vertical boundary, these cylinders cannot have interior tangencies to it due to the maximum principle. This allows us to apply Gromov compactness and obtain compact 0-dim oriented moduli spaces $\overline{\mathcal{M}}(E,J,p_-,p_+)$. Then $\Psi(p_+) = \sum_{p_-}\#^\pm \overline{\mathcal{M}}(E,J,p_-,p_+) \cdot p_- $ where $(-CZ(p_-))-(-CZ(p_+)) =+1$. Standard Floer theory techniques show that this is a chain map.

	\section{Stable Hamiltonian Structures and Hofer Energy} \label{ham}
	
	In order to obtain the compactness result needed to define the map above, we will dedicate this section to some of the setup needed. In particular, we need to at least define what we mean by finite energy Floer trajectories. For a more complete survey, see ch. 6 of Wendl \cite{Wendl}.
	
	\begin{definition}
		A \textbf{stable Hamiltonian structure} (SHS for short) on an oriented $(2n- 1)$-dimensional manifold $M$ is a pair $(\Lambda,\Omega)$ consisting of 1-form $\Lambda$ and a closed 2-form $\Omega$ such that:
		\begin{enumerate}
			\item $\Omega|_{\ker \Lambda}$ is nondegenerate.
			\item $\ker \Omega \subset \ker d\Lambda$.
		\end{enumerate}
	\end{definition}
	
	Moreover, a stable Hamiltonian structure $(\Lambda,\Omega)$ gives a co-oriented hyperplane distribution $\xi := \ker \Lambda$ and a positively transverse vector field $R$ determined by the conditions $\Omega(R, \cdot) \equiv 0$ and $\Lambda(R) \equiv 1$. This is analogous to contact manifolds and so we'll call $R$ the \textbf{Reeb vector field}. Indeed, contact manifolds are an example of manifolds with stable Hamiltonian structures.
	
	\begin{example}
		Suppose $(M,\alpha)$ is a contact manifold. Then $(\alpha,d\alpha)$ is a stable Hamiltonian structure. The second property is trivially satisfied while the first property is built into the definition for $\alpha$ to be a contact form. $\xi = \ker \alpha$ is the usual hyperplane distribution and $R$ coincides with the usual Reeb vector field of contact geometry.
	\end{example}
	
	\begin{example}
		For isolated hypersurface singularities defined by polynomial $f:\C^{n+1} \to \C$ and $\phi$ is the monodromy, the mapping torus lives in $\C^{n+1}$ and inherits lots of structure. For example, $(\pi^* d\theta, i^*\omega_0)$ as a stable Hamiltonian structure where $i^*\omega_0$ is the restriction of the standard symplectic structure $\omega_0$ on $\C^n$ and $d\theta$ is a closed 1-form on $S^1$. Observe that for tangent vectors $v \in TM_f$, $d\pi(v) = 0$ and hence $v \in \ker \pi^* d\theta$. On the other hand, we have the linear map $df$ (not a differential form since $f$ is not a real function) and $\ker df|_{T_f} = \bigcup_{\theta \in S^1} TM_f(\theta)$ where $M_f(\theta) = f^{-1}(\theta)$. So $\ker \pi^* d\theta = \ker df|_{T_f}$.
		
		On the other hand, the fibers are symplectic submanifolds with respect to $\omega_0$ and so $\omega_0|_{\ker df}$ is nondegenerate. Moreover, $\pi^* d\theta$ is closed, so $\ker d(\pi^*d\theta) = T\C^n$ and hence contains everything, including $\ker \omega_0$.
		
		Note that this stable Hamiltonian structure is not contact since the distribution $\ker df|_{T_f}$ is integrable. But there is a contact structure we could put on $T_f$ if we use the so-called generalized Thurston-Winkelnkemper construction.
		
		For higher iterates of the monodromy, we can use the branched covers discussed earlier to put stable Hamiltonian structures on those mapping tori.
	\end{example}
	
	One useful feature of a SHS on a manifold $M$ is that there is a symplectization. The \textbf{symplectization} of $(M, \Lambda,\Omega)$ for any stable Hamiltonian structure $(\Lambda,\Omega)$ can be defined by choosing suitable diffeomorphisms of $(-\epsilon, \epsilon) \times M$ with $\R \times M$. Equivalently, we may consider $\R \times M$ with the family of symplectic forms $\omega_\psi$ defined by $\omega_\psi:= d(\psi(r)\Lambda) + \Omega$ where $\psi$ is any element of $\mathcal{T} := \{\psi \in C^\infty(\R,(-\epsilon,\epsilon)):\psi' > 0 \}$. Note that $\omega_\psi = \psi' dr \wedge \Lambda + \psi d\Lambda + \Omega$. If we restrict $\omega_\psi$ to $\xi = \ker \Lambda$, then the first term vanishes.
	
	Having a way to define a symplectization in hand, it's natural to consider $J$-holomorphic cylinders in the symplectization. But what sort of $J$ do we consider?
	
	\begin{definition}
		Let $r$ be the coordinate on $(-\epsilon,\epsilon)$. Then $J$ is an \textbf{admissible} almost complex structure if
		\begin{enumerate}
			\item $J$ is invariant under translation in the $(-\epsilon,\epsilon)$ factor.
			\item $J \partial_r = R$ (Reeb vector field)
			\item $J(\xi) = \xi$ (the hyperplane distribution is $J$-invariant)
			\item $J|_\xi$ is compatible with the symplectic vector bundle structure $\Omega|_\xi$.
		\end{enumerate}
		
		An alternative nomenclature is to call such $J$ \textbf{cylindrical} since they bear much similarity to the contact case.
	\end{definition}
	
	These properties are enough to show that an admissible $J$ is tamed by every $\omega_\psi$. Thus, we may define the \textbf{Hofer energy} of a $J$-holomorphic curve $u:(\Sigma,j) \to (\R \times M,J)$ where $J$ is admissible:
	
	\[E(u) := \sup_{\psi \in \mathcal{T}} \int_\Sigma u^*\omega_\psi.\]
	
	One can show that each $\omega_\psi$ tames an admissible $J$ and hence, $E(u) \geq 0$ with equality if and only if $u$ is a constant map. One important remark that Wendl points out is that this notion of energy is different from some other notions that appear in symplectic geometry, such as Hofer energy. However, for the purposes of getting uniform bounds in order to have compactness of moduli spaces, this notion of energy is sufficient.
	
	\textbf{Note:} One reason to define the Hofer energy in this way is for the following reason. Consider a contact manifold $(M,\alpha)$ and a Reeb orbit $\gamma$ of period $T> 0$. A \textbf{trivial cylinder} $u: \R \times S^1 \to (\R \times M, d(e^t\alpha))$ is $u(s,t) = (Ts,\gamma(Tt))$ and if we use $\int_{\R \times S^1} u^*d(e^t \alpha)$ as the energy, the energy would be
	\[\lim_{s \to + \infty} \int_{S^1} u^* e^t \alpha - \lim_{s \to -\infty} \int_{S^1} u^* e^t \alpha = \infty.\]
	
	The point of choosing $\psi:\R \to (-\epsilon,\epsilon)$ is get a finite quantity (without changing the symplectomorphism type); we then take a supremum since there is no canonical choice of $\psi$.
	
	In the mapping torus case, we have that $\omega_\psi = d(\psi(r)\pi^* d\theta + i^* \alpha_0)$ where $\alpha_0$ is a primitive 1-form for $\omega_0$; say $\alpha_0 = \frac{1}{2}\sum y_kdx_k - x_k dy_k$. Then, if we look at finite energy curves $u:\Sigma \to \R \times T_f$, they have to limit to positive and negative orbits. We'll denote the positive and negative ends with $\Gamma^\pm$. So the energy is
	
	\[E(u):=\sum_{\gamma_+ \in \Gamma^+} \lim_{r \to +\infty} \int_{S^1} \gamma^*_+ (\psi(r) \pi^* d\theta +i^*\alpha_0) - \sum_{\gamma_- \in \Gamma^-} \lim_{r \to -\infty} \int_{S^1} \gamma^*_- (\psi(r) \pi^* d\theta +i^*\alpha_0).\]
	
	Supposing $\psi \to \pm \epsilon$ as $r \to \pm \infty$, the energy becomes
	
	\[E(u):= \epsilon\sum_{\gamma_+ \in \Gamma^+}  \int_{S^1} \gamma^*_+ (\pi^* d\theta +i^*\alpha_0) - \epsilon\sum_{\gamma_- \in \Gamma^-} \int_{S^1} \gamma^*_- (\pi^* d\theta +i^*\alpha_0).\]
	
	The $\gamma^* \pi^* d\theta$ gives the winding number whereas $\gamma^* i^* \alpha_0 = \gamma^* \alpha_0$ (since the image of $\gamma$ is in $T_f$) gives the action or length of the orbit. If the orbit is near the horizontal boundary of $T_f$, it will have very large action. At least, on a Milnor fiber which is Stein, as we go to infinity, the volume increases exponentially. Since the mapping torus, near infinity, is a circles worth of Stein manifolds, the energy is increasing.
	
	\section{Compactness of Moduli of Floer Trajectories} \label{cpt}
	
	We're now prepared to prove the following:
	
	\begin{theorem}
		Let $p_-,p_+$ be two fixed points of the monodromy symplectomorphism with $(-CZ(p_-))-(-CZ(p_+)) =1$. Then, the 0-dim oriented moduli space $\overline{\mathcal{M}}(E,J,p_-,p_+)$ of finite energy Floer trajectories that limit to the Reeb orbits corresponding to $p_-$ and $p_+$ is compact and hence, finite.
	\end{theorem}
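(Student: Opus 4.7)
The plan is to run the standard package of Floer-theoretic compactness arguments --- a uniform energy bound, a $C^0$ bound via a maximum-principle argument, exclusion of bubbling by exactness, and a dimension count ruling out breaking --- adapted to the cobordism $E$; the key geometric input is the $J$-convex hypersurface near the vertical boundary of $E$ flagged just before the statement.

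First, I establish a uniform energy bound. Because $J$ is cylindrical on both vertical ends and the almost complex structure interpolates in a compact interior region, the integral $\int u^{*}\omega_{\psi}$ for any $u\in\mathcal{M}(E,J,p_{-},p_{+})$ telescopes, up to a uniformly bounded interior contribution, to the difference of the actions at the two asymptotic orbits, which are fixed a priori. Taking the supremum over $\psi\in\mathcal{T}$ as in Section~\ref{ham}, the Hofer energy $E(u)$ is bounded by a constant depending only on $-\widetilde{F}(p_{-})$ and $-F_{\phi_{f}}(p_{+})$.

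Second --- the step I expect to be the main obstacle --- I obtain a uniform $C^{0}$ bound by constructing a smooth plurisubharmonic function $\rho$ in a neighborhood of each component of the horizontal boundary of $E$. Concretely, near $W\times S^{1}$ the almost complex structure $J$ is modeled on the standard one inherited from $\C^{n+1}$, and one can build $\rho$ from a suitable function of the distance to $W$ (combined with a bump in the Liouville coordinate of the fibers) so that $\rho$ is strictly $J$-convex and its sublevel sets exhaust the complement of a neighborhood of $\partial E$. Since $p_{-}$, $p_{+}$, and their Reeb orbits all lie in a compact region strictly inside $E$, an interior maximum of $\rho\circ u$ for a $J$-holomorphic $u$ would contradict the strong maximum principle for subharmonic functions. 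Hence the image of every such $u$ is confined to a compact subset of $E$ independent of $u$. The analogous argument along the vertical ends is unnecessary because the asymptotic conditions already pin $u$ down there.

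Third, bubbling is excluded by exactness: the symplectic form on $E$ is the restriction of $\omega_{0}=d\alpha_{0}$ from $\C^{n+1}$, so any $J$-holomorphic sphere or (in the absence of Lagrangian boundary conditions) disk bubble must be constant. Combined with the previous two steps, Gromov--Floer compactness applies, and any sequence in $\mathcal{M}(E,J,p_{-},p_{+})$ subconverges to a broken trajectory, i.e.\ a finite concatenation of $J$-holomorphic cylinders in $E$ together with possibly cylindrical pieces in the symplectizations of the two ends, joined at common Reeb orbits. For generic cylindrical $J$, each factor is cut out transversely; by additivity of the Conley--Zehnder index under concatenation and the hypothesis $(-CZ(p_{-}))-(-CZ(p_{+}))=1$, any nontrivial breaking would force at least one factor to have strictly negative expected dimension, which is impossible. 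Therefore no breaking occurs, $\overline{\mathcal{M}}(E,J,p_{-},p_{+})$ is already compact, and being a $0$-dimensional compact oriented manifold, it is finite.
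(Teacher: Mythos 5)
Your approach is essentially the same as the paper's: both hinge on a plurisubharmonic function near the horizontal boundary of $E$ (the paper extends the Stein Morse function from the Milnor fiber, yielding a $J$-convex hypersurface of the form $\R \times L \times S^1$, and applies the no-interior-tangency lemma from Oancea) to obtain the $C^0$ bound, then invoke Gromov compactness. You simply spell out the supporting standard steps --- uniform energy bound from asymptotic actions, exclusion of bubbling by exactness, and index arithmetic ruling out breaking --- that the paper treats as understood.
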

	
	\begin{proof}
		To get compactness results, we first note that in the cobordism between mapping tori $T_f$ and $T_g$ and also in the symplectizations, because the monodromy is compactly supported, there is a hypersurface of the form $\R \times L \times S^1$ where $L$ is the link and $L \times S^1 \subset T_f$ (again, the monodromy being compactly supported means that the mapping torus is trivial near the boundary). Note that since $L \subset T_f$ is a contact submanifold, $\R \times L$ is a symplectic submanifold of $\R \times T_f$ and that $\R\times L \times S^1$ is a union of all the slices of $\R \times L$ transported by the Reeb flow. Let $\psi$ be the Stein Morse function on a Milnor fiber and let $\Psi$ be the trivial extension of it to the mapping torus $T_f$ near the boundary (again, the boundary is trivial because the monodromy is compactly supported). Then in a neighborhood of the boundary, the level sets of $\Psi$ are $L \times S^1$. Taking $pr_2:\R \times T_f \to T_f$, the map $\Psi \circ pr_2$ has $\R \times L \times S^1$ as a regular level set and $dd^c (\Psi \circ pr_2) \leq 0$ since $dd^c \psi \leq 0$. Hence, $\R \times L \times S^1$ is a $J_0$-convex hypersurface. We now state a lemma (from Oancea's survey, Lemma 1.4, \cite{Oancea}).
		
		\begin{lemma}
			Let $S \subset M$ be a $J$-convex hypersurface and $\psi$ a (local) function of definition. No $J$-holomorphic curve $u: (D^2(0, 1), i) \to M$ can have an interior strict tangency point with $S$; i.e. $\psi \circ u$ cannot have a strict local maximum.
		\end{lemma}
		
		$\R \times L \times S^1$ is a $J_0$-convex hypersurface with $\Psi \circ pr_2$ as the function of definition. Since the trajectories converge to Reeb orbits lying in a compact set, any trajectory has its ends contained in the compact set. This means that a sequence of trajectories that escape towards the boundary will give a trajectory with an interior tangency point. This contradicts the lemma and hence, we cannot actually have a sequence of trajectories escaping. In other words, the image of the Floer trajectories of finite energy limiting to the given Reeb orbits must all lie in a compact set in the target. Then the two conditions of Gromov compactness are fulfilled: finite energy and images lying in a compact set. We conclude that the moduli space is compact.
	\end{proof}
	
	\section{(1+1) TQFT Equipped with Partial Lefschetz Fibrations} \label{tqft}
	
	Having shown that the maps we defined for the concrete cobordisms are sensible because we have compact moduli spaces, we can then ask various questions about their properties. For example, do the maps depend on the open book decomposition or perhaps only on data that comes from our polynomial $f$? In this section, we'll develop a bit of theory to address this question but provide more than we need because this theory may be of independent interest. In some of the discussion here, we will speak of Floer homology as opposed to \textit{co}homology but there is really no difference. We discuss homology because in \cite{SeidelDehn} (which we borrow heavily from), Seidel presents Floer \textit{homology} of a symplectomorphism $\phi:M \to M$ alternatively as the Floer
	homology of the mapping torus $M \times [0,1]/\sim$ where $(x,t) \sim
	(\phi(x),t+1)$. This is a natural fibration over an oriented circle $Z$ (to
	keep with his notation). However, the formalism presented works for any
	symplectic fibration $F \to Z$. Moreover, when we have a connected, compact,
	oriented surface $S$ with $p+q$ boundary circles divided into $p$ positive and
	$q$ negative ends, this induces maps on tensor products of the homology. In
	more detail, let $\partial S = \bar{Z}^-_1 \cup...\cup \bar{Z}^-_p \cup
	Z^+_{p+1} \cup ...\cup Z^+_{p+q}$. Then, given a symplectic fibration $E \to S$
	with fiber $M$, the restrictions $F^\pm_k = E|_{Z^\pm_k}$ are symplectic
	fibrations and we have a relative Gromov invariant
	
	\[G(S,E):\bigotimes^p_{k=1} HF_*(Z^-_k,F^-_k) \longrightarrow \bigotimes^{p+q}_{k=p+1} HF_*(Z^+_k,F^+_k).\]
	
	\begin{remark}
		In Seidel's lecture notes, he sometimes assumes that $H^1(M,\R) = 0$ in order to use the $C^\infty$-topology on $\text{Symp}(M,\omega)$. For us, when $n \geq 3$, the homotopy type of the Milnor fiber $M_f$ of $f:\C^{n+1} \to \C$ is that of $\bigvee^\mu S^n$ where $\mu$ is the Milnor number. Hence, $H^1(M_f,\R)=0$. When $n=1$, the fiber is finitely many points and when $n=2$, the fiber is a curve, a well-studied situation.
	\end{remark}
	
	\noindent For our purposes, we mostly care about $S = S^1 \times [0,1]$ with a
	positive and negative end. However, we'll like to extend this formalism to
	include maps between fibrations with different fibers. In order to do this, we
	need to extend the theory to that of partial Lefschetz fibrations (see McLean's
	preprint \cite{McLeanGrowth}). 
	
	\begin{definition}
		A \textbf{partial Lefschetz fibration} is comprised of a quadruple $(E,S,\pi, K)$ where $E$
		is a smooth manifold with corners, $K \subset \text{Int}(E)$ is a compact subset in the interior of $E$, $\pi:E \setminus K \to S$ is a map between manifolds. Moreover, $E$ consists of two codimension 1 boundary components $\partial_h E$ (horizontal boundary) and $\partial_v E$ (vertical boundary) meeting in a codimension 2 component. There is a 1-form $\theta_E$ on $E$ making $E$ into a Liouville domain after smoothing the corners. The map $\pi$ must satisfy the following properties:
		
		\begin{enumerate}
			\item A neighborhood of $\partial_h E$ is diffeomorphic to $S \times (1-\epsilon, 1] \times \partial F$
			where $F$ is some Liouville domain called the \textbf{fiber} of $\pi$. Here $\theta_E = \theta_S + r \alpha_F$ where $\theta_S$ is a Liouville form on $S$ and $r$ parameterizes the interval. The 1-form $\alpha_F$ is the contact form on $\partial F$. The map $\pi$ is the projection map to $S$ in this neighborhood.
			
			\item $\theta_E$ restricted to the fibers of $\pi$ is non-degenerate away from the singularities of $\pi$.
			
			\item The restriction $\pi|_{\partial_v E}$ is a fibration whose fibers are exact symplectomorphic to $F$ such that the fibers of $\pi$ are either disjoint from $\partial_V E$ or entirely contained in $\partial_v E$.
			
			\item There are only finitely many singularities of $\pi$ and they are all disjoint from the boundary $\partial E$. They are modeled on non-degenerate holomorphic singularities.
		\end{enumerate}
	\end{definition}
	
	The most general theory would not require a global Liouville form on $E \setminus K$ but only when restricted to the fibers. The way to think about such a definition is perhaps to first begin with $K = \varnothing$ and $S = D \subset \C$ which reduces the situation to the usual Lefschetz fibrations that the reader may be accustomed to. In all situations, the preimage $\pi^{-1}(\partial S)$ is the vertical boundary but in particular when $S$ is the unit disk, $\pi|_{\partial_v E}:\partial_v E \to S^1$ is part of the data of an open book decomposition.
	
	When the compact set $K \neq \varnothing$, one might think of it as ``hiding'' the critical points of any extension of $\pi$ to $E$; these critical points can be quite pathological and nonisolated, which allows us to consider, for example, cobordisms between mapping tori with different fibers. These cobordisms can be treated with classical Morse theory where we think of handle attachment as occuring when we traverse pass a critical value. Lefschetz theory, being over $\C$, cannot recover this since one can always go around a critical value. In fact, we can take $K$ to be larger so that $\pi$ has no singularities at all.
	
	Also, near the boundary $\partial_v E$, we have a connection given by the $\omega_E$-orthogonal plane field to the fibers. Because the fibration is a product near $\partial_h E$, the parallel transport maps associated to this connection are well defined and are compactly supported if we transport around a loop. The symplectomorphism $\phi: F \to F$ given by parallel transporting around a loop on $\partial S$ is called, without surprise, the monodromy symplectomorphism around this boundary component.
	
	\subsection{Unique Analytic Continuation $\Rightarrow$ $HF^*$ Depends Only On the Framed Binding}
	
	This theory may seem unsuitable for studying holomorphic sections since
	such curves seem uncontrolled within $K$. However, this is not the case. Let
	\[\mathcal{M}_{E,K} = \{u:S \to E:\bar{\partial}_J u = 0, \pi \circ (u|_{u^{-1}(E \setminus K)}):u^{-1}(E \setminus K) \to S \text{ is the natural injection} \} \]
	
	\noindent When a holomorphic curve is injective on an open set (and in fact,
	the identity), then there is a unique analytic continuation of the curve due to the identity theorem: a holomorphic map is completely determined by its restriction to open sets and in the case of curves, determined even just by a sequence of points with an accumulation point. Hence, the moduli space above does not depend on $K$.
	
	Within this formalism, we're able to show that our maps do not depend on the
	open book decomposition of the contact manifold (the mapping tori) but only on
	the binding and its normal framing. Recall that $W$ is the section of the Milnor fiber of $\tilde{g}$ which has links $L_g$ and $L_f$ as boundary.
	
	\begin{lemma} Let $M \to Z$ be an open book decomposition over an oriented
		circle $Z$ with page $F$ being a Liouville domain and $B$ the codim 2 binding. Then, up to isomorphism, $HF_*(Z,M)$ only depends on $B$ and a choice of normal framing.
	\end{lemma}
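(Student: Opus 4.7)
My plan is to deploy the partial Lefschetz fibration formalism over the surface $S = S^1 \times [0,1]$, viewed as a cobordism with one negative and one positive boundary circle, and extract the isomorphism from the resulting relative Gromov invariant. Given two open books $\pi_i:M_i\setminus B \to Z$ for $i=0,1$ sharing the binding $B$ and its framing, with pages $F_i$ and monodromies $\phi_i$, the goal is to build a partial Lefschetz fibration $(E,S,\pi,K)$ whose vertical boundary $\partial_v E$ recovers $M_0 \sqcup M_1$ and whose horizontal boundary $\partial_h E$ is a product neighborhood of $B$.

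First, I would construct the horizontal part using the common framing. A framing of the normal bundle of $B$ identifies a tubular neighborhood of $B$ in each $M_i$ with $S^1 \times D^2 \times B$ in which $\pi_i$ is the angular projection in the $D^2$ factor. Because the framings agree on the two ends, these tubular neighborhoods glue canonically along $[0,1]$ to form a collar $\partial_h E \cong S \times (1-\epsilon,1] \times B$ with Liouville form $\theta_S + r\alpha_F$, meeting the product axiom in the definition of a partial Lefschetz fibration. Away from this collar the pages must interpolate between $F_0$ and $F_1$, so I pick any smooth Liouville cobordism $W$ from $F_0$ to $F_1$ with boundary $B\times[0,1]$, bundle it over $Z$ in a way that realizes $\phi_0$ and $\phi_1$ at the two ends, and bury all critical points of the resulting map to $S$ inside a compact set $K \subset \mathrm{Int}(E)$. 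This is the main obstacle: arranging a global Liouville primitive $\theta_E$ that restricts correctly on $\partial_h E$ and remains non-degenerate on fibers outside $K$, while the pages change arbitrarily in topology from $F_0$ to $F_1$.

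Once $(E,S,\pi,K)$ is built, the relative Gromov invariant supplies a candidate morphism
\[
G(S,E): HF_*(Z,M_0) \longrightarrow HF_*(Z,M_1)
\]
obtained by counting elements of $\mathcal{M}_{E,K}$. Crucially, independence of $K$ follows from the unique analytic continuation principle stated above: any $J$--holomorphic section is determined by its values on $S \setminus \pi(K)$, where it agrees with the identity section, so enlarging or shrinking $K$ does not change the moduli space and the count is intrinsic to the framed binding. Compactness and transversality go through exactly as in the closed Lefschetz case, because $K$ is disjoint from $\partial E$ and the behavior near $\partial_h E$ is governed by the product collar.

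Finally, for invertibility I reverse the cobordism to obtain a second partial Lefschetz fibration inducing $G(S,E'):HF_*(Z,M_1)\to HF_*(Z,M_0)$. Gluing $E$ and $E'$ along their common vertical boundary yields a partial Lefschetz fibration over $S^1 \times [0,2]$ which, by a standard neck-stretching/deformation argument on the relative Gromov invariant, is equivalent to the trivial cylindrical fibration $Z \times [0,2] \to S^1\times[0,2]$ whose induced map is the identity on $HF_*$. The analogous argument with $E$ and $E'$ swapped shows the other composition is also the identity, so $G(S,E)$ is the desired isomorphism depending only on $(B,\text{framing})$.
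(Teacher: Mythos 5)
Your overall strategy — build a partial Lefschetz fibration over $S = S^1 \times [0,1]$, use the relative Gromov invariant to get a map, and show it is an isomorphism by gluing with the reverse cobordism — is the right one and matches the paper's structure. But you have missed the single idea that makes the argument work, and the obstacle you yourself flag in the middle of the proposal is a genuine, unresolved gap.

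You set out to interpolate between the two pages by choosing ``any smooth Liouville cobordism $W$ from $F_0$ to $F_1$'' and then bundling it over $Z$ so as to realize both monodromies $\phi_0$ and $\phi_1$. There is no reason such a bundled Liouville cobordism with the required monodromies and the required product collar near $B$ should exist; the pages can differ arbitrarily in topology and the monodromies need not be compatible in any obvious way. You correctly identify ``arranging a global Liouville primitive $\theta_E$ that restricts correctly on $\partial_h E$ and remains non-degenerate on fibers outside $K$'' as the main obstacle, but then proceed as if it has been handled. It has not, and it will not be easy to handle directly. This is precisely the problem that the partial-Lefschetz-fibration formalism is designed to circumvent.

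The paper's proof takes a different (and much cheaper) route: since the two open book decompositions live on the \emph{same} total space $M$ with the same binding and framing, one simply takes $E$ to be the trivial cylinder cobordism on $M$ and chooses the compact set $K$ to be essentially all of the interior, so that $E \setminus K$ is a thin collar of $\partial E$. On that collar, $\pi$ is given by $\pi_0$ near one vertical end, by $\pi_1$ near the other, and by the product projection near the horizontal boundary — and the framing ensures these collar identifications are consistent. No interpolating fibration is ever constructed; no Liouville cobordism between pages is ever needed. The ``arbitrary change in page topology'' you worry about is swallowed whole by $K$, which is exactly the purpose of allowing $K \neq \varnothing$. Likewise, the gluing step is then immediate: gluing two trivial cylinders on $M$ gives another trivial cylinder on $M$, which manifestly induces the identity, whereas in your version the glued fibration is not obviously trivial and the ``standard neck-stretching/deformation argument'' you invoke is doing real, unjustified work. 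To repair your proposal, drop the page-cobordism $W$ entirely, take $E = M \times [0,1]$, and let $K$ hide the interior.
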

	
	\begin{proof} Let the total space $M$ be given two open book decompositions
		with pages $F_1,F_2$ and the same binding $B$ and trivialization of the
		normal bundle. We then have two trivial cobordisms on the total space
		$M$, call them $E_{12}$ and $E_{21}$. Note that this does not affect
		the orientation of $M$. In each of these, we can choose compact sets
		$K_{12},K_{21}$ such that their complements are basically small
		neighborhoods of the boundary of each $E_{12},E_{21}$. For each
		situation, the boundary is a union of vertical and horizontal
		components. The horizontal boundary is, under the trivialization,
		$[0,1] \times \partial F_i$, $i=1,2$. But $\partial F_1 = \partial F_2
		= B$.
		
		Thus, we have two partial Lefschetz fibrations. The gluing of
		$E_{12}$ and $E_{21}$ gives a trivial cobordism between $M$ and itself
		equipped with the same open book. This induces the identity map which
		means that the composition of the maps induced by $E_{12}$ and $E_{21}$
		must be inverses. The same is true if we glue in the opposite way:
		$E_{21}$ to $E_{12}$. Hence, each induced map is an isomorphism and $HF_*(Z,M)$ does not depend on the choice of open book decomposition, so long as we fix the binding and choice of
		normal framing.
	\end{proof}
	
	The argument for this lemma is another proof of some of the results of Appendix B \cite{McLeanLog}. Moreover, it implies the following:
	
	\begin{corollary} Let $M_f,M_{\tilde{g}}$ be the mapping tori from before
		which fiber over $Z_f,Z_{\tilde{g}}$, respectively. Then, if $E \to A$
		is the cobordism over the annulus from Section \ref{cobordism}, using this notation, we have an induced map $\Psi$ of Section \ref{map} which passes to cohomology $HF^*(Z_f,M_f) \to HF^*(Z_{\tilde{g}},M_{\tilde{g}})$. This $\Psi$ does not depend on the choice of open books but only on the horizontal boundary of $E$ which is given by $W \times A$, under the canonical normal framing.
	\end{corollary}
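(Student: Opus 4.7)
The plan is to combine the preceding lemma, which shows that the Floer cohomology of a single mapping torus depends only on the binding and its canonical normal framing, with the $K$-independence property of partial Lefschetz fibrations, to conclude that the cobordism map $\Psi$ depends only on the horizontal boundary $W \times A$ equipped with its canonical framing. The main idea is that any two choices of cobordism $E$ satisfying the hypothesis can be brought to a common form by gluing trivial cobordisms at the two ends, after which the comparison is purely internal.

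First I would fix two choices of compatible open book decompositions $(\pi^f_1,\pi^{\tilde{g}}_1)$ and $(\pi^f_2,\pi^{\tilde{g}}_2)$ on $M_f$ and $M_{\tilde{g}}$, each respecting the bindings $L_f, L_{\tilde{g}}$ and their canonical framings. Each yields a partial Lefschetz fibration $E_i \to A$ from Section \ref{cobordism} with induced map $\Psi_i$; in both cases the horizontal boundary is $W \times A$ with the canonical framing. To compare $\Psi_1$ and $\Psi_2$, I apply the preceding lemma to produce trivial cobordisms $F^f$ from $(M_f,\pi^f_1)$ to $(M_f,\pi^f_2)$ and $F^{\tilde{g}}$ from $(M_{\tilde{g}},\pi^{\tilde{g}}_2)$ to $(M_{\tilde{g}},\pi^{\tilde{g}}_1)$, compatible with the respective bindings and framings, and glue them to the two ends of $E_2$ to form $\widetilde{E} := F^f \cup E_2 \cup F^{\tilde{g}}$. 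By the preceding lemma, each trivial cobordism induces the canonical identification on $HF^*$, so by the TQFT composition rule the induced map of $\widetilde{E}$ equals $\Psi_2$ under those identifications.

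Now both $E_1$ and $\widetilde{E}$ are partial Lefschetz fibrations over the annulus $A$ with identical external data: the same endpoint open books $(\pi^f_1,\pi^{\tilde{g}}_1)$ and the same horizontal boundary $W \times A$ with the canonical framing. Any discrepancy between them is confined to the interior. Choosing compact sets $K_1$ and $\widetilde{K}$ in the respective total spaces large enough to absorb these discrepancies, we observe that the complements agree in the data relevant to the moduli spaces $\mathcal{M}_{E,K}$ of holomorphic sections. Since $\mathcal{M}_{E,K}$ is independent of $K$ by the unique analytic continuation argument given above, the signed counts defining $\Psi_1$ and the induced map of $\widetilde{E}$ coincide. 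Therefore $\Psi_1 = \Psi_2$ modulo the canonical identifications, and in particular $\Psi$ depends only on $W \times A$ together with its canonical normal framing.

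The main technical obstacle will be verifying that the gluing $\widetilde{E}$ genuinely assembles into a valid partial Lefschetz fibration: the Liouville forms must match coherently across the interfaces, the canonical framing on $W \times A$ must be preserved under the gluing, and the compactness mechanism of Section \ref{cpt} built from the $J_0$-convex hypersurface $\mathbb{R} \times L \times S^1$ near the horizontal boundary must continue to control Floer trajectories in the enlarged total space with enlarged $K$. Each of these is a routine but careful check; once they are in place, the two ingredients---the preceding lemma and $K$-independence---assemble into the desired statement.
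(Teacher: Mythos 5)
The paper itself gives no explicit proof of this corollary---it simply asserts that the preceding lemma "implies" it---so your job is precisely to flesh out that implication, and the route you take (gluing the trivial cobordisms from the lemma onto the ends of one choice of $E$, invoking TQFT composition, and then comparing the resulting total space to the other choice via the $K$-independence of $\mathcal{M}_{E,K}$) is the natural one and matches the toolkit used in the lemma's proof. That part of your plan is sound.

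The one place that deserves more care is the step where you apply $K$-independence to compare the two total spaces $E_1$ and $\widetilde{E}$. The statement the paper actually proves via unique analytic continuation is that, for a \emph{fixed} total space $E$, the moduli space $\mathcal{M}_{E,K}$ is independent of the choice of $K \subset \operatorname{Int}(E)$. You are using something strictly stronger: that if $E_1 \setminus K_1$ and $\widetilde{E} \setminus \widetilde{K}$ are isomorphic as (partial) fibrations, then $\mathcal{M}_{E_1,K_1}$ and $\mathcal{M}_{\widetilde{E},\widetilde{K}}$ are identified. Unique continuation does give you \emph{uniqueness}: a Floer trajectory exits $K$ near its asymptotic ends, and its germ on $u^{-1}(E\setminus K)$ determines it. But it does not by itself give you \emph{existence}: given $u_1 \in \mathcal{M}_{E_1,K_1}$, there is no a priori reason that the germ of $u_1$ over $E_1 \setminus K_1$, once transported to $\widetilde{E}\setminus \widetilde{K}$, extends to a $J$-holomorphic map into $\widetilde{E}$, since the filling regions $K_1$ and $\widetilde{K}$ can have completely different internal structure. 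You need an additional argument here---for instance, that the relevant curves never actually enter $K$ (which could be forced by a $J$-convexity or action argument tailored to these cobordisms), or a cobordism of moduli spaces between the two problems. The paper's own proof of the lemma skates over the same point, so you are not out of step with the paper, but if you want a complete proof you should make this step explicit rather than folding it into "$K$-independence."

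Your listed technical obstacles (compatibility of Liouville forms across the gluing interfaces, preservation of the canonical normal framing on $W\times A$, persistence of the $J_0$-convex hypersurface argument for compactness in the glued space) are correctly identified and genuinely need checking, but they are the more routine part; the $K$-independence-across-total-spaces step is the one that requires a new idea or at least a careful sentence.
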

	
	This means that the map really only ultimately depends on information that is supplied by the algebro-geometric data.
	
	\section{Novel Proof of Zariski's Conjecture} \label{sunflower}
	
	Now that we've established some properties of this map, let's apply it towards addressing a conjecture of Zariski. Recall that the Milnor number of an analytic hypersurface singularity represented by $f$ can be algebraically defined as $\mu:=\dim_\C \mathcal{O}/\text{Jac}(f)$. It is a fact that the singularity is isolated if and only if $\mu < \infty$. In the work of \cite{BobadillaPelka}, they prove results which imply the following: 
	
	\begin{theorem}
		If a family of isolated hypersurface singularities has constant Milnor number $\mu$, then they also have constant multiplicity. 
	\end{theorem}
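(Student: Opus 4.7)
The plan is to proceed by contradiction. Suppose a family $f_t$ of isolated hypersurface singularities has $\mu(f_t) \equiv \mu$ finite but $\nu(f_t)$ drops at some parameter $t_0$. By upper semicontinuity of multiplicity we may arrange $\nu(f_{t_0}) > \nu(f_t) =: \nu$ for $t$ arbitrarily close to $t_0$, so that $[f_t] \to [f_{t_0}]$ is an adjacency in the sense of Section \ref{adj}. I will derive a contradiction by comparing two computations of $HF^*(\phi^\nu,+)$ for the two singularities: one via the TQFT map produced by the adjacency cobordism, and one via McLean's spectral sequence.

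First I would invoke L\^e--Ramanujam: constancy of $\mu$ (in the range $n \neq 2$; the surface case is handled separately) implies that the Milnor fibrations of $f_{t_0}$ and $f_t$ are topologically equivalent, so in particular their links are graded contactomorphic. By the contactomorphism invariance of fixed-point Floer cohomology listed in Section \ref{floer}, this already yields abstract isomorphisms $HF^*(\phi^k_{f_t},+) \cong HF^*(\phi^k_{f_{t_0}},+)$ for every iterate $k$. The crucial strengthening, however, is to realize these isomorphisms geometrically by the adjacency map. Feeding the branched cobordisms of Section \ref{cobordism} into the partial Lefschetz fibration framework of Section \ref{tqft}, and using the neck-lengthening and compactness results of Sections \ref{ham}--\ref{cpt}, produces an exact triangle (Lemma \ref{triangle}) whose third term is controlled by vanishing cycles that would appear if $f_{t_0}$ were smoothed to $f_t$ through an honest one-parameter deformation. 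Constancy of $\mu$ prevents such vanishing cycles from existing, so the third term is zero and the induced map $\Psi^\nu : HF^*(\phi^\nu_{f_{t_0}},+) \to HF^*(\phi^\nu_{f_t},+)$ is an isomorphism.

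Finally I would compare this against McLean's spectral sequence applied at $k = \nu$. On the $f_t$ side, $\nu = \nu(f_t)$ means that in any multiplicity-$\nu$ separating log resolution there is exactly one exceptional divisor whose order of vanishing divides $\nu$, namely the divisor of order $\nu$ itself, so the $E^1$-page has a single nonzero column, the spectral sequence degenerates, and $HF^*(\phi^\nu_{f_t},+)$ is nonzero (indeed, as recorded in Section \ref{mult}, isomorphic to a shifted $H^*_c$ of the Milnor fiber of the leading homogeneous part of $f_t$). On the $f_{t_0}$ side every exceptional divisor has order of vanishing at least $\nu(f_{t_0}) > \nu$, so no divisor's order divides $\nu$, the $E^1$-page is zero, and $HF^*(\phi^\nu_{f_{t_0}},+) = 0$. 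This contradicts the isomorphism established in the previous step, and the proof of the multiplicity statement is complete.

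The main obstacle, as I see it, is the Picard--Lefschetz identification of the third term in the exact triangle. The partial Lefschetz fibration formalism of Section \ref{tqft} delivers the triangle almost for free, and the spectral sequence bookkeeping at $k = \nu$ is routine; but identifying the third term explicitly as a sum of local vanishing-cycle contributions, and then killing those contributions using the no-vanishing-cycle rigidity forced by L\^e--Ramanujam, is where essentially all of the algebro-symplectic content of the argument has to be packed. Once this is done cleanly, the same machinery, together with the fact that the lowest $k$ with $HF^*(\phi^k,+) \neq 0$ also controls the log canonical threshold via McLean's spectral sequence, will simultaneously recover Varchenko's theorem as flagged in the introduction.
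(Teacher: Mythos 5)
Your proposal captures the broad shape of the argument (adjacency cobordism, exact triangle, L\^e--Ramanujam, McLean spectral sequence at $k=\nu$), but there are two substantive gaps and a structural misunderstanding of what the exact triangle actually compares.

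First, the claim that L\^e--Ramanujam already gives graded contactomorphic links is unjustified, and in fact the paper explicitly flags this as open: topological equivalence of Milnor fibrations is a far weaker statement than contactomorphism of links, and it is precisely because we \emph{cannot} conclude graded contactomorphism directly that the whole TQFT machinery is needed. Your sentence ``so in particular their links are graded contactomorphic'' is the statement the paper poses as a question in the final section and notes is ``either false or so far unknown.'' If you had that, the whole theorem would follow in one line from property (2) of $HF^*$ in Section \ref{floer}, and the rest of the paper would be unnecessary.

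Second, you have misidentified the terms of Lemma \ref{triangle}. The exact triangle does \emph{not} directly relate $HF^*(\phi^k_{f_{t_0}})$ to $HF^*(\phi^k_{f_t})$. It relates two different monodromies of the \emph{same} member $g$: the original $\phi^k_g$ (small radius $\delta'$ disc, small ball $B_{\epsilon'}$) and the extended $\psi^k_g$ (large disc $\delta$, large ball $B_\epsilon$), with third term $H^*(W,\partial W)$ where $W = g^{-1}(\delta)\cap(B_\epsilon\setminus B_{\epsilon'})$. L\^e--Ramanujam (plus Freedman for $n=2$) kills this third term by showing $W$ is a topologically trivial collar, giving $HF^*(\psi^k_g)\cong HF^*(\phi^k_g)$. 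But that still leaves the second half of the argument, which you have omitted entirely: one must separately show $HF^*(\psi^k_g)\cong HF^*(\phi^k_f)$. The paper does this via Giroux's correspondence and Picard--Lefschetz theory: since $\mu$ is constant, no critical values split off under the perturbation from $f$ to $g$, so the Morsifications of $f$ and $g$ give deformation-equivalent Lefschetz fibrations over the same disc, and $HF^*$ depends only on the monodromy. Your proposal attributes the killing of ``vanishing cycles'' to the exact triangle's third term, when in the paper the no-vanishing-cycles observation and the topological triviality of $W$ are two distinct uses of $\mu$-constancy applied at two different stages. Your contradiction framing and the $k=\nu$ spectral-sequence bookkeeping are fine, but they are also somewhat beside the point: once $HF^*(\phi^k_f)\cong HF^*(\phi^k_g)$ is established for all $k$, equality of multiplicity (and of log canonical threshold, via the $\liminf$ formula) drops out directly without any indirect argument.
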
	
	
	This is a specialized form of Zariski's conjecture \cite{Zariski} which states: If $f,g:(\C^{n+1},0) \to (\C,0)$ define singularities with the same topological type; i.e. there is a homeomorphism $\Phi: (\C^{n+1},0) \to (\C^{n+1},0)$ such that $\Phi(V_f) = V_g$, then the singularities have the same multiplicity. In the case that the singularities are isolated, Milnor proved that the Milnor number is a topological invariant and hence, de Bobadilla-Pe\l ka's work proves Zariski's conjecture for families of isolated hypersurface singularities. However, as of 2022, the conjecture is open for pairs of singularities that do not fit into a family and is also open for general classes of nonisolated singularities though Massey pointed out that their work applies to certain types of nonisolated singularities.

	As mentioned in the introduction, the de Bobadilla-Pe\l ka proof uses log resolutions and builds something called an A'Campo space for it which involves some tropical geometry. They also use McLean's work on fixed-point Floer cohomology for Milnor fibrations and multiplicity. The main technical result is an extension of McLean's spectral sequence to include the data of fixed points at infinity.
	
	In this section, we provide an alternative and somewhat simpler proof (which relies on McLean's original spectral sequence). Consider polynomials $f,g:\C^{n+1} \to \C$ with isolated singularities and suppose that they fit into an adjacent family. We can define the Milnor fibration for $f$ using radius $\delta$ circle and cutoff the total space with an $\epsilon$ ball. For the Milnor fibration of $g$, use constants $\delta',\epsilon'$ where these are much smaller than their respective counterparts for $f$. Then, the Milnor fiber $M_g$ embeds into the Milnor fiber of $M_f$ and the extra part is a cobordism $W = M_f \setminus M_g$. Here's the picture from before:
	
	\begin{center}
		\tikzset{every picture/.style={line width=0.75pt}} %set default line width to 0.75pt
		
		\begin{tikzpicture}[x=0.75pt,y=0.75pt,yscale=-1,xscale=1]
			%uncomment if require: \path (0,372); %set diagram left start at 0, and has height of 372
			
			%Curve Lines [id:da5420547952716865]
			\draw [line width=1.5]    (355.42,224.86) .. controls (367.33,257.5) and (414.77,206.09) .. (436.57,262.62) ;
			%Curve Lines [id:da24025337540987635]
			\draw [line width=1.5]    (355.54,111.81) .. controls (363.33,86.5) and (375.81,89.52) .. (385.38,89.76) .. controls (394.95,90.01) and (423.33,113.17) .. (433.77,64.84) ;
			%Curve Lines [id:da6739804758408963]
			\draw    (349.22,211.33) .. controls (371.4,256.44) and (381.54,279.12) .. (391.33,301.5) ;
			%Curve Lines [id:da21397622713626885]
			\draw    (316.83,168.1) .. controls (388.19,164.14) and (422.57,211.62) .. (436.57,262.62) ;
			%Curve Lines [id:da8731683817333415]
			\draw    (316.83,168.1) .. controls (396.58,147.36) and (413.33,116.5) .. (432.77,70.4) ;
			%Shape: Ellipse [id:dp1581194322078936]
			\draw  [dash pattern={on 0.84pt off 2.51pt}] (164.33,168.1) .. controls (164.33,83.91) and (232.61,15.66) .. (316.83,15.66) .. controls (401.06,15.66) and (469.33,83.91) .. (469.33,168.1) .. controls (469.33,252.29) and (401.06,320.54) .. (316.83,320.54) .. controls (232.61,320.54) and (164.33,252.29) .. (164.33,168.1) -- cycle ;
			%Shape: Ellipse [id:dp6585130852464707]
			\draw  [dash pattern={on 0.84pt off 2.51pt}] (248.63,168.1) .. controls (248.63,130.45) and (279.16,99.92) .. (316.83,99.92) .. controls (354.5,99.92) and (385.04,130.45) .. (385.04,168.1) .. controls (385.04,205.75) and (354.5,236.28) .. (316.83,236.28) .. controls (279.16,236.28) and (248.63,205.75) .. (248.63,168.1) -- cycle ;
			%Shape: Ellipse [id:dp5335788850778409]
			\draw  [color={rgb, 255:red, 255; green, 0; blue, 0 }  ,draw opacity=1 ][fill={rgb, 255:red, 255; green, 0; blue, 0 }  ,fill opacity=1 ] (350.64,224.86) .. controls (350.64,222.22) and (352.78,220.08) .. (355.42,220.08) .. controls (358.06,220.08) and (360.2,222.22) .. (360.2,224.86) .. controls (360.2,227.5) and (358.06,229.64) .. (355.42,229.64) .. controls (352.78,229.64) and (350.64,227.5) .. (350.64,224.86) -- cycle ;
			%Shape: Ellipse [id:dp13121358083732826]
			\draw  [color={rgb, 255:red, 0; green, 0; blue, 255 }  ,draw opacity=1 ][fill={rgb, 255:red, 0; green, 0; blue, 255 }  ,fill opacity=1 ] (428.99,69.62) .. controls (428.99,66.98) and (431.13,64.84) .. (433.77,64.84) .. controls (436.41,64.84) and (438.55,66.98) .. (438.55,69.62) .. controls (438.55,72.26) and (436.41,74.4) .. (433.77,74.4) .. controls (431.13,74.4) and (428.99,72.26) .. (428.99,69.62) -- cycle ;
			%Curve Lines [id:da5880687424034943]
			\draw    (232.33,135.5) .. controls (255.33,106.5) and (324.78,159.72) .. (349.22,211.33) ;
			%Curve Lines [id:da043699630475921625]
			\draw    (234.33,202.5) .. controls (263.33,228.5) and (326.48,171.98) .. (351.35,119.39) ;
			%Curve Lines [id:da9402739527339292]
			\draw    (351.35,119.39) .. controls (365.8,87.22) and (372.94,78.25) .. (388.33,33.5) ;
			%Shape: Ellipse [id:dp4548857614142561]
			\draw  [color={rgb, 255:red, 255; green, 0; blue, 0 }  ,draw opacity=1 ][fill={rgb, 255:red, 255; green, 0; blue, 0 }  ,fill opacity=1 ] (350.76,111.81) .. controls (350.76,109.17) and (352.9,107.03) .. (355.54,107.03) .. controls (358.18,107.03) and (360.32,109.17) .. (360.32,111.81) .. controls (360.32,114.45) and (358.18,116.59) .. (355.54,116.59) .. controls (352.9,116.59) and (350.76,114.45) .. (350.76,111.81) -- cycle ;
			%Shape: Ellipse [id:dp15238140945376166]
			\draw  [color={rgb, 255:red, 0; green, 0; blue, 255 }  ,draw opacity=1 ][fill={rgb, 255:red, 0; green, 0; blue, 255 }  ,fill opacity=1 ] (431.79,262.62) .. controls (431.79,259.98) and (433.93,257.84) .. (436.57,257.84) .. controls (439.21,257.84) and (441.35,259.98) .. (441.35,262.62) .. controls (441.35,265.26) and (439.21,267.4) .. (436.57,267.4) .. controls (433.93,267.4) and (431.79,265.26) .. (431.79,262.62) -- cycle ;
			%Curve Lines [id:da02062722520202498]
			\draw [line width=0.75]  [dash pattern={on 0.84pt off 2.51pt}]  (364.33,118.17) .. controls (374.58,103.53) and (381.81,102.52) .. (391.38,102.76) .. controls (400.95,103.01) and (430.33,117.17) .. (439.77,77.84) ;
			%Curve Lines [id:da26112729605040985]
			\draw [line width=0.75]  [dash pattern={on 0.84pt off 2.51pt}]  (345.54,105.03) .. controls (364.33,64.5) and (377.76,76.26) .. (387.33,76.5) .. controls (396.91,76.74) and (409.89,95.49) .. (424.33,61.17) ;
			%Curve Lines [id:da5499778128623531]
			\draw [line width=0.75]  [dash pattern={on 0.84pt off 2.51pt}]  (362.42,218.86) .. controls (379.33,243.5) and (420.77,199.09) .. (442.57,255.62) ;
			%Curve Lines [id:da7305752068209586]
			\draw [line width=0.75]  [dash pattern={on 0.84pt off 2.51pt}]  (345.42,231.86) .. controls (368.33,269.5) and (420.33,223.17) .. (429.33,271.17) ;
			%Curve Lines [id:da29975410013770265]
			\draw    (234.33,202.5) .. controls (216.33,189.5) and (218.33,155.5) .. (232.33,135.5) ;
			%Shape: Circle [id:dp011268652518372502]
			\draw  [fill={rgb, 255:red, 0; green, 0; blue, 0 }  ,fill opacity=1 ] (315.58,167.85) .. controls (315.58,166.61) and (316.59,165.6) .. (317.83,165.6) .. controls (319.08,165.6) and (320.08,166.61) .. (320.08,167.85) .. controls (320.08,169.09) and (319.08,170.1) .. (317.83,170.1) .. controls (316.59,170.1) and (315.58,169.09) .. (315.58,167.85) -- cycle ;
		\end{tikzpicture}
		
		\caption{Figure: A cobordism between Milnor fibrations}
	\end{center} 
	
	In this picture, there's a modification of $g$ to some $\tilde{g}$ so that it agrees with $f$ near the boundary of $B_\epsilon$. There may be some extra topology appearing when we enlarge the ball but the radius $\delta'$ monodromy $\phi^k_g$ won't do anything to the extra part. However, if we enlarge both the ball and the circle for the monodromy of $g$ to radii $\epsilon$ and $\delta$, respectively, then it may see extra singularities. In the picture below, when we perturb $f$ to $g$, some critical value(s) split off (depicted in red) and the monodromy $\phi^k_g$ does not see them.
	
	\begin{center}
		\tikzset{every picture/.style={line width=0.75pt}} %set default line width to 0.75pt        
		
		\begin{tikzpicture}[x=0.75pt,y=0.75pt,yscale=-1,xscale=1]
			%uncomment if require: \path (0,300); %set diagram left start at 0, and has height of 300
			
			%Shape: Circle [id:dp6814100145032034] 
			\draw   (182,124.17) .. controls (182,70.5) and (225.5,27) .. (279.17,27) .. controls (332.83,27) and (376.33,70.5) .. (376.33,124.17) .. controls (376.33,177.83) and (332.83,221.33) .. (279.17,221.33) .. controls (225.5,221.33) and (182,177.83) .. (182,124.17) -- cycle ;
			%Shape: Circle [id:dp4725210038425216] 
			\draw   (252.5,124.17) .. controls (252.5,109.44) and (264.44,97.5) .. (279.17,97.5) .. controls (293.89,97.5) and (305.83,109.44) .. (305.83,124.17) .. controls (305.83,138.89) and (293.89,150.83) .. (279.17,150.83) .. controls (264.44,150.83) and (252.5,138.89) .. (252.5,124.17) -- cycle ;
			%Shape: Circle [id:dp38257959146413856] 
			\draw  [fill={rgb, 255:red, 0; green, 0; blue, 0 }  ,fill opacity=1 ] (276,124.17) .. controls (276,122.42) and (277.42,121) .. (279.17,121) .. controls (280.92,121) and (282.33,122.42) .. (282.33,124.17) .. controls (282.33,125.92) and (280.92,127.33) .. (279.17,127.33) .. controls (277.42,127.33) and (276,125.92) .. (276,124.17) -- cycle ;
			%Shape: Circle [id:dp07768379954423565] 
			\draw  [color={rgb, 255:red, 255; green, 0; blue, 0 }  ,draw opacity=1 ][fill={rgb, 255:red, 255; green, 0; blue, 0 }  ,fill opacity=1 ] (217.83,124.19) .. controls (217.82,122.44) and (219.23,121.01) .. (220.98,121) .. controls (222.73,120.99) and (224.16,122.4) .. (224.17,124.15) .. controls (224.18,125.9) and (222.77,127.32) .. (221.02,127.33) .. controls (219.27,127.34) and (217.84,125.93) .. (217.83,124.19) -- cycle ;
			
			% Text Node
			\draw (310,116.55) node [anchor=north west][inner sep=0.75pt]  [font=\small]  {$\delta '$};
			% Text Node
			\draw (381,116.55) node [anchor=north west][inner sep=0.75pt]  [font=\small]  {$\delta $};	
		\end{tikzpicture}	
	\end{center}
	
	But if we enlarge the circle to radius $\delta$ and consider a new monodromy, denote it as $\psi^k_g$, then it will take into account the extra critical values and have some new and nontrivial action on $B_\epsilon$. Note that if we used a smaller cutoff $B_{\epsilon'}$; i.e. restrict $\psi^k_g$ to a smaller ball, then $\psi^k_g|_{B_{\epsilon'}} = \phi^k_g$ because we've taken away the piece on which it would perform its new monodromy. So to \textbf{recap}: if we only enlarge the ball, we see extra topology but the monodromy is trivial on the extra part. If we only enlarge the circle, the monodromy will encircle more critical values but not do anything on a restricted ball of small radius. However, if we enlarge both, then there is both extra topology and extra critical values so then the monodromy $\psi^k_g$ is truly different.
	
	\begin{lemma} \label{triangle}
		Let $W = g^{-1}(\delta) \cap (B_\epsilon \setminus B_{\epsilon'})$; this is the extra part of the Milnor fiber when we enlarge the ball. Then there is an exact triangle given by action filtration of orbits:
		\begin{center}
			\begin{tikzcd}
				HF^*(\psi^k_g) \arrow{rr}  & & HF^*(\phi^k_g) \arrow{ld} \\
				& H^*(W,\partial W) \arrow{lu}
			\end{tikzcd}
		\end{center}
	\end{lemma}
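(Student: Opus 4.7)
I would realise the triangle as the cohomology long exact sequence of an action-filtered short exact sequence of chain complexes for $\psi^k_g$ on the enlarged Milnor fiber $N := g^{-1}(\delta)\cap B_\epsilon$. This $N$ decomposes along the separating contact hypersurface $L_g = \partial(g^{-1}(\delta)\cap B_{\epsilon'})$ as $N = (g^{-1}(\delta)\cap B_{\epsilon'}) \cup_{L_g} W$. The main technical device is the neck-lengthening construction of Section~\ref{map}: replace a symplectic collar of $L_g$ in $N$ by an arbitrarily long cylinder $(-T,T)\times L_g$, and modify the action function $-F_{\psi^k_g}$ by a compactly supported autonomous Hamiltonian which is linear of small positive slope $C$ on the neck, has a small constant value on the $B_{\epsilon'}$-side, and equals a $C^2$-small Morse function $h$ on $W$, with small negative normal slope along $L_g$ and small positive slope along the outer boundary. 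Take $C$ smaller than the smallest Reeb period of $L_g$. Tuning the added constant and choosing $T$ large, one arranges that every fixed point of the resulting small positive slope perturbation $\check\psi^k_g$ which lies on the $B_{\epsilon'}$-side has action strictly separated from every fixed point lying in $W$. Picking a threshold $a$ between the two sets yields a short exact sequence of chain complexes
\[
0 \longrightarrow CF^*_W(\check\psi^k_g) \longrightarrow CF^*(\check\psi^k_g) \longrightarrow CF^*_{B_{\epsilon'}}(\check\psi^k_g) \longrightarrow 0,
\]
where the indices indicate the region in which the generators sit.

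\textbf{Identifying the two pieces.} Since $\psi^k_g|_{B_{\epsilon'}} = \phi^k_g$, the generators of the quotient $CF^*_{B_{\epsilon'}}(\check\psi^k_g)$ are canonically in bijection with the fixed points of a small positive slope perturbation $\check\phi^k_g$. The $J$-convex hypersurface $\R \times L_g \times S^1$ together with the maximum principle of Section~\ref{cpt} confines every Floer trajectory connecting two such fixed points to the $B_{\epsilon'}$-side, so the Floer differentials agree and the quotient complex is canonically $CF^*(\check\phi^k_g)$, with cohomology $HF^*(\phi^k_g)$. The subcomplex $CF^*_W(\check\psi^k_g)$ is generated by the fixed points in $W$; by the choice of $h$, these are precisely the critical points of a $C^2$-small Morse function on $W$ subject to Dirichlet-type boundary conditions on $\partial W$. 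A standard $C^0$-small argument identifies the Floer differential on this subcomplex with the Morse differential, so its cohomology is $H^*(W, \partial W)$. Passing to the cohomology long exact sequence of the short exact sequence above then yields the triangle of the lemma.

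\textbf{Main obstacle.} The subtle step is the Morse identification of the $W$-piece: the restriction $\psi^k_g|_W$ is not the identity but a composition of $k$-th powers of Dehn twists around the vanishing cycles of the extra critical values of $g$ sitting in $B_\epsilon\setminus B_{\epsilon'}$. One must argue that, for $T$ large and $C$ small enough, every fixed point arising from this Dehn-twist dynamics falls outside the action window cut out by $a$, or equivalently that inside the action window the $C^2$-small Morse piece of the Hamiltonian dominates the Floer theory. The anticipated argument bounds the action of any Dehn-twist fixed point from below by the symplectic area of its vanishing sphere and exploits the freedom in the choice of neck length and slope, together with the neck-stretching and maximum principle machinery of Sections~\ref{map} and~\ref{cpt}, to push these contributions safely outside the relevant filtration layer. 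With this separation in place, the constructed short exact sequence delivers the stated exact triangle.
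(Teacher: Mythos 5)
Your overall strategy matches the paper's proof: both arguments realize the triangle as the cohomology long exact sequence of an action-filtered short exact sequence of Floer complexes, with the neck-lengthening device from Section~\ref{map} used to separate the $W$-orbits from the $B_{\epsilon'}$-orbits, a maximum principle to confine the differential, and a Morse-theoretic identification of the $W$-piece with $C^*(W,\partial W)$.

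The place where your argument genuinely diverges from the paper's is your ``main obstacle'' paragraph, and there is a real problem with it. You correctly observe that $\psi^k_g|_W$ is not close to the identity in general --- it is a composition of Dehn twists around the vanishing cycles of whatever critical values split off when $f$ is perturbed to $g$. But your proposed fix, pushing the Dehn-twist fixed points ``outside the relevant filtration layer'' by choosing $T$ large and $C$ small, cannot work as stated: the action filtration exhausts all of $CF^*(\check\psi^k_g)$, so there is no ``outside.'' Any Dehn-twist fixed point lying in $W$ is necessarily assigned an action on one side of the threshold $a$ or the other; if you push it to very high action it lands inside the $W$-subcomplex (together with the Morse critical points) and still contributes to its cohomology, while if it lands below $a$ it contaminates the identification of the quotient with $CF^*(\check\phi^k_g)$. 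Energy bounds on the neck cannot make these generators vanish from the complex.

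The correct observation is simpler and is essentially what the paper (implicitly) relies on in its application: the lemma is invoked only in the $\mu$-constant setting. There, no critical values split off into $D_\delta\setminus D_{\delta'}$, so $\psi^k_g|_W$ is isotopic to the identity, and the only fixed points of a $C^\infty$-small positive-slope perturbation in $W$ are Morse critical points of the perturbing Hamiltonian, whence the $W$-subcomplex really is $C^*(W,\partial W)$. Without that hypothesis, the third vertex should instead be a fixed-point Floer group $HF^*(\psi^k_g|_W)$ of the Dehn-twist composition on the cobordism, which is not $H^*(W,\partial W)$ in general. You should flag this dependence explicitly rather than attempt to neutralize the Dehn-twist orbits by neck lengthening; the energy/action bookkeeping you propose would not close the gap.
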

	
	\begin{proof}
		Above in Section \ref{map}, we made an action filtering/lengthening the cylinder argument to define the chain map. The discussion there is easily adapted to our current situation basically verbatim to show how to use the action and lengthening to separate out the fixed points of $\psi^k_g$ which are not also fixed points of $\phi^k_g$; let $b$ be a value in between these two sets of actions. However, unlike that situation, we do have extra fixed points due to enlarging the radius of the circle to take into account extra critical values which are seen by the monodromy $\psi^k_g$. These fixed points live in $W$, are made to have much higher action by the lengthening argument, and compute the relative cohomology: $H^*(W,\partial W)$. There is an injective chain map $C^*(W,\partial W) \hookrightarrow CF^*(\psi^k_g)$.
		
		Next, we define a chain map on the subcomplex $CF^*(\psi^k_g)_{\leq b} \to CF^*(\phi^k_g)$ in a similar way as in Section \ref{map} via a count of Floer trajectories. We then extend the map by zero to the full complex $CF^*(\psi^k_g)$; it is clear that the kernel is exactly $C^*(W,\partial W)$. Lastly, the map from $CF^*(\phi^k_g) \to C^*(W,\partial W)$ is just the zero map. This is because the geometric generators for each complex (fixed points or Morse critical points, for example) live in disjoint subsets and moreover, the actions of the generators are also disjointed. Hence, we get an exact triangle but it in fact splits up into a series of short exact sequences.
	\end{proof}
	
	Now, L\^e-Ramanujam \cite{le_ramanujam} (or see \cite{BobadillaPelka}, Prop. 5.22), proved: 
	
	\begin{theorem}[L\^e-Ramanujam]
		Given a family of isolated singularities $f_t:\C^{n+1} \to \C$ with $n \neq 2$ such that the Milnor number for these is constant (independent of the parameter $t$), then the diffeotype of the Milnor fibrations for the $f_t$ is also independent of $t$. 
	\end{theorem}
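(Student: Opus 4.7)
The plan is to follow the classical strategy: first build an ambient trivialization of the family that works topologically, and then upgrade to a smooth trivialization via the h-cobordism theorem, with the hypothesis $n \neq 2$ entering precisely to make the smooth h-cobordism theorem applicable. I would begin by choosing a Milnor radius $\epsilon>0$ for $f_0$ and a parameter disk $\Delta \ni 0$ small enough that, for every $t \in \Delta$, the hypersurface $f_t^{-1}(0)$ meets $\partial B_\epsilon$ transversely and the only critical point of $f_t$ in $\bar B_\epsilon$ is at a nearby point $p_t$; after a smooth change of coordinates I can arrange $p_t \equiv 0$. The constant-Milnor-number hypothesis is what forbids stray critical points from peeling off: by upper semi-continuity of the total Milnor number, the assumption $\mu(f_t)=\mu(f_0)$ forces all of the Milnor contribution in $B_\epsilon$ to concentrate at the origin. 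Consequently the critical locus of the combined map $F(z,t)=(f_t(z),t)$ on $B_\epsilon \times \Delta$ is exactly the smooth curve $\Sigma = \{0\} \times \Delta$.

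The core task is then to trivialize the Milnor fibration data $(B_\epsilon, f_t^{-1}(\delta))$ over $\Delta$. I would look for a smooth vector field $V$ on $B_\epsilon \times \Delta \setminus \Sigma$ satisfying $dF(V)=\partial_t$ and tangent to $\partial B_\epsilon \times \Delta$, whose flow produces an ambient isotopy carrying $f_0^{-1}(\delta)$ to $f_t^{-1}(\delta)$. Off $\Sigma$ such a lift exists trivially, and the boundary condition is compatible thanks to the transversality arranged in the first step. The delicate part is controlling $V$ as one approaches $\Sigma$, so that its flow extends to a continuous ambient isotopy. This is where the $\mu$-constant hypothesis is genuinely used: by work of Teissier and Bri\c{a}n\c{c}on-Speder, $\mu$-constant families satisfy Thom's $a_F$ condition along $\Sigma$, so the stratification $\{B_\epsilon \times \Delta \setminus \Sigma,\, \Sigma\}$ is Whitney regular, and Thom-Mather first isotopy then delivers the desired topological trivialization of the Milnor fibration. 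I expect this middle step to be the main obstacle, since it is the place where one must convert a purely numerical hypothesis (constancy of $\mu$) into geometric control over how the polar curve and nearby critical behavior degenerate.

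Finally, to upgrade the topological trivialization to a smooth one I would compare Milnor fibers for two parameters $s,t \in \Delta$. Choosing cut-off radii appropriately yields a smooth embedding $M_{f_t} \hookrightarrow M_{f_s}$ with complement a compact cobordism $W_{s,t}$ of real dimension $2n$ between the links $L_{f_t}$ and $L_{f_s}$. The equality $\mu(f_s)=\mu(f_t)$, combined with the Milnor-fiber homotopy type $\bigvee^\mu S^n$, forces both boundary inclusions into $W_{s,t}$ to be homology isomorphisms. For $n \geq 3$, the links are $(n-2)$-connected by Milnor, hence simply connected, and a Van Kampen argument shows $W_{s,t}$ is simply connected as well; Whitehead's theorem then promotes the homology equivalence to a homotopy equivalence, so $W_{s,t}$ is an $h$-cobordism of dimension $\geq 6$, and Smale's theorem furnishes a smooth product structure. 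This glues with the ambient isotopy from the previous step to yield a smooth trivialization. The case $n=1$ is handled directly by the classification of surfaces, since a $\mu$-constant family of plane curve singularities is in fact Zariski equisingular and the Milnor fibers are surfaces of determined genus and number of boundary components. The exclusion $n \neq 2$ is genuine: for $n=2$ the cobordism $W_{s,t}$ is $4$-dimensional, where the smooth $h$-cobordism theorem fails, and one only obtains homeomorphism of Milnor fibrations.
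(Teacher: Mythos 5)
The paper does not give its own proof of this statement; it cites Lê--Ramanujam \cite{le_ramanujam} and only remarks that their argument goes through the $h$-cobordism theorem (hence the restriction $n\neq 2$, with Freedman recovering the topological statement when $n=2$). So there is no internal proof to compare against; what follows is an assessment of your proposal on its own terms.

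Your overall architecture --- reduce to an $h$-cobordism between Milnor fibers at two different radii/parameters, check the hypotheses of Smale's theorem when $2n\geq 6$, and dispose of $n=1$ by surface classification --- is the correct one and is essentially what Lê and Ramanujam do. The final paragraph is sound in outline, though you should be a bit more careful about \emph{why} the inclusion $M_{f_t}\hookrightarrow M_{f_s}$ induces an isomorphism on $H_n$: both groups are free of rank $\mu$, but one still needs an argument (Lê--Ramanujam use the long exact sequence of the pair together with Poincaré--Lefschetz duality on $W_{s,t}$, not merely a rank count) to rule out the inclusion being, say, multiplication by $2$ on a factor.

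The middle step has several genuine problems. First, the attribution is backwards: the fact that a $\mu$-constant family satisfies Thom's $a_f$ condition along $\Sigma$ is a theorem of Lê and Saito, whereas Briançon--Speder is precisely the \emph{counterexample} showing that $\mu$-constant does \emph{not} imply Whitney equisingularity (their example occurs at $n=2$, which is one reason the hypothesis $n\neq 2$ persists in so many statements in this subject; Teissier's positive result requires the stronger hypothesis of $\mu^*$-constancy). Second, and more substantively, Thom's $a_f$ condition is a regularity condition on a stratified \emph{map}, not on a stratified \emph{space}; it does not ``imply Whitney regular'' for the stratification $\{B_\epsilon\times\Delta\setminus\Sigma,\ \Sigma\}$, which is trivially Whitney regular anyway since $\Sigma$ is a smooth submanifold. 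To trivialize the fibration along strata you would need the \emph{second} Thom--Mather isotopy lemma (the one for stratified maps, which is where the $a_f$ condition actually enters), not the first. Third, even with these corrections, the isotopy-lemma approach only yields a \emph{topological} trivialization, so you would still have to extract from it a cobordism of the right kind to feed into the $h$-cobordism machine, and that extraction is exactly where Lê--Ramanujam's direct construction is cleaner: they simply take $W$ to be $f_t^{-1}(\delta)\cap(B_{\epsilon_0}\setminus B_{\epsilon_1})$ for two Milnor radii and never invoke stratification theory. I would recommend dropping the Thom--Mather digression entirely and working directly with the cobordism, using only Ehresmann's fibration theorem (away from the critical locus) for the preliminary identifications.
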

	
	Their proof relies on the h-cobordism theorem and so when $n=2$, the homeotype, thought not diffeotype, is also independent of $t$ by the work of Freedman. What are the consequences of this theorem?
	
	Given a $\mu$-constant family, the members of the family are adjacent to each other. When we perturb one to the other, there will not be any critical values shooting off because the number of those which can shoot off is bounded above by the difference in Milnor number which, in this case, is zero. So the extended monodromy $\psi^k_g$ will not see any extra critical values and hence, not change. This is because the monodromy on a smoothing of the singular fiber is only nontrivial in the region very near the singularity. So $\psi^k_g$ extends $\phi^k_g$ by the identity map.
	
	What about the extra piece that comes from enlarging to $B_\epsilon$? L\^e-Ramanujam's result tells us that no extra topology appears so this $W$ is smoothly trivial when $n \neq 2$: $W = [0,1] \times L_g$ where $L_g$ is the link of $g$ and boundary of $M_g$. Again, when $n=2$, it is at least topologically trivial. 
	
	\begin{remark} \label{quibble}
		We emphasize that it's possible that there are values of $r \in [\epsilon',\epsilon]$ such that $g^{-1}(\delta)$ does not intersect $S^{2n+1}_r$ transversally which allows for the possibility of being nontrivial as a Stein cobordism. If all the critical points of the Liouville vector field are subcritical, we can perform handlemoves to cancel the points and obtain something in the same Weinstein class. But if there are critical points (and the cobordism isn't flexible), then it may very well have some interesting symplectic topology that the monodromy maps completely overlooks.
	\end{remark}
	
	By Lemma \ref{triangle}, we already know that $HF^*(\phi^k_g) \cong HF^*(\psi^k_g)$. We now want to show that $HF^*(\psi^k_g) \cong HF^*(\phi^k_f)$. The indirect way to do this is to take the branched mapping torus (embedded in some $k$-fold branch cover of $\C^{n+1}$) of each and observe that it is graded contactomorphic to the abstract mapping torus given by a page and self-map: $(M_g(\delta,\epsilon),\psi^k_g)$. This is the content of Giroux's work; see Remark \ref{giroux} in Appendix A.
	
	This abstract mapping torus, in turn, is graded contactomorphic to the boundary of the Lefschetz fibration we get when we Morsify $g$ which is an open book decomposition; by Morsify, we mean that we take a very small perturbation of $g$ to make it a Morse function. The reason for this is due to Picard-Lefschetz theory which tells us that the monodromy is a composition of generalized Dehn twists along the vanishing cycles (the Lagrangian spheres). The Morsification of $f$ is essentially the same as the Morsification of $g$ and hence, gives the same Lefschetz fibration (up to deformation equivalence) if we use the same $\delta$ and $\epsilon$. If we use the larger ones, then Remark \ref{quibble} says there may be some symplectic subtleties that go undetected. However, fixed point Floer cohomology is blind to those differences and is even insensitive to the deformation equivalence because it is only sensitive to the monodromies. So as far as $HF^*$ is concerned, the mapping tori for $\psi^k_g$ and $\phi^k_f$ use the same $\delta$ and $\epsilon$ by comparing them to the boundary of the ``same'' Lefschetz fibration.
	
	The final step is to see that since L\^e-Ramanujam theorem (plus the $n=2$ case) implies that $W \cong_{\text{homeo}} [0,1] \times L_g$ which means that $H^*(W,\partial W) = 0$ for any $n$. Therefore, our exact triangle above produces for us a symplectically-constructed isomorphism: $HF^*(\phi^k_f) \cong HF^*(\phi^k_g)$ for every $k$. By \cite{McLeanLog}, we are able to recover the multiplicity of $f$ simply by looking for the smallest $k$ such that $HF^*(\phi^k_f) \neq 0$; this was discussed in Section \ref{mult}. Moreover, we also recover the log canonical threshold of $f$ by the formula:
	\[\text{lct}_0(f) = \liminf_{k\to \infty}  \left(\inf \left\{-\frac{\alpha}{2k}: HF^\alpha(\phi^k_f,+) \neq 0 \text{ or } -\frac{\alpha}{2k}=1  \right\} \right)\]
	
	\noindent To summarize, we have proven Theorem \ref{zar} which can be phrased more colloquially as:
	
	\begin{theorem} 
		If a family of isolated hypersurface singularities is $\mu$-constant, the multiplicity and log canonical threshold are also constant in the family as a result of symplectic considerations.
	\end{theorem}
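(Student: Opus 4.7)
The plan is to show that, under the $\mu$-constant hypothesis, $HF^*(\phi^k_f,+) \cong HF^*(\phi^k_g,+)$ for every iterate $k$, and then extract the multiplicity and log canonical threshold from $HF^*$ using the characterization recalled in Section~\ref{mult} (the multiplicity $\nu(f)$ is the smallest $k$ with $HF^*(\phi^k_f,+) \neq 0$) together with McLean's liminf formula for $\text{lct}_0$. The isomorphism itself will be built by combining the exact triangle of Lemma~\ref{triangle} with the L\^e--Ramanujam theorem.

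Concretely, for two members $f$ and $g$ of the family it suffices, by continuity of the family, to treat the case where $g$ is an arbitrarily small perturbation of $f$. For each $k$ I will introduce two versions of the Floer complex for $g$: the \emph{small} one $HF^*(\phi^k_g,+)$ defined with Milnor data $(\delta',\epsilon')$, and the \emph{enlarged} one $HF^*(\psi^k_g,+)$ defined with the data $(\delta,\epsilon)$ used for $f$, where $\delta' \ll \delta$ and $\epsilon' \ll \epsilon$. Lemma~\ref{triangle} then yields the exact triangle relating $HF^*(\psi^k_g,+)$, $HF^*(\phi^k_g,+)$, and $H^*(W,\partial W)$, where $W = g^{-1}(\delta) \cap (B_\epsilon \setminus B_{\epsilon'})$. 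The $\mu$-constant hypothesis enters the argument in two decisive ways. First, because $\mu$ is preserved under the perturbation $f \rightsquigarrow g$, no critical values can split off, so $\psi^k_g$ extends $\phi^k_g$ by the identity on the newly-exposed region and its branched mapping torus becomes graded contactomorphic to that of $\phi^k_f$: Picard--Lefschetz realizes both monodromies as the same product of Dehn twists along the vanishing cycles of a common Morsification, and property (2) of Section~\ref{floer} then gives $HF^*(\psi^k_g,+) \cong HF^*(\phi^k_f,+)$. Second, the L\^e--Ramanujam theorem (supplemented by Freedman's work in the case $n=2$) forces $W$ to be topologically a cylinder $[0,1] \times L_g$, so $H^*(W,\partial W) = 0$ and the exact triangle collapses to $HF^*(\phi^k_g,+) \cong HF^*(\psi^k_g,+)$. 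Composing these identifications gives $HF^*(\phi^k_f,+) \cong HF^*(\phi^k_g,+)$ for every $k$.

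The main obstacle I anticipate lies in the first step above, namely identifying $HF^*(\psi^k_g,+)$ with $HF^*(\phi^k_f,+)$ at the level of branched mapping tori. As flagged in Remark~\ref{quibble}, the Stein symplectic topology of $W$ can in principle be rather intricate, so one must verify that the comparison is really insensitive to this and depends only on the binding together with its canonical normal framing; this is exactly what the corollary of Section~\ref{tqft} is designed to guarantee, but applying it requires checking that the framings of $L_g$ and $L_f$ match up under the identification and that the argument is compatible with passing to $k$-fold branched covers. Once the isomorphism $HF^*(\phi^k_f,+) \cong HF^*(\phi^k_g,+)$ is secured for all $k$, the characterization of $\nu$ from Section~\ref{mult} gives constancy of the multiplicity, and McLean's liminf formula recalled in the excerpt gives constancy of the log canonical threshold, completing the proof.
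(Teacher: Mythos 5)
Your proposal follows the paper's argument essentially step for step: using the exact triangle of Lemma~\ref{triangle}, invoking L\^e--Ramanujam (plus Freedman for $n=2$) to kill $H^*(W,\partial W)$, identifying $HF^*(\psi^k_g,+)$ with $HF^*(\phi^k_f,+)$ via Picard--Lefschetz and a common Morsification, and then reading off $\nu$ and $\text{lct}_0$ from $HF^*(\phi^k,+)$ via McLean's results. You even anticipate the same subtlety (Remark~\ref{quibble}) that the paper resolves by noting $HF^*$ is insensitive to those symplectic refinements, so this is the same proof.
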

	
	\begin{remark}
		We highlight the fact that the Milnor number is a smooth topological invariant and the multiplicity and log canonical threshold are invariants for graded contactomorphic pairs by the work of McLean \cite{McLeanLog}. By comparison, the theorem here is about families with constant Milnor number and we show that the family has the property that each member has the same multiplicity and log canonical threshold and moreover, this is a \emph{symplectic property}. The result was previously proven for the log canonical threshold by Varchenko \cite{varchenko} in 1982 and for multiplicity in 2022 by de Bobadilla-Pe\l ka \cite{BobadillaPelka}. So our work is a new proof and also illustrates that these are not just algebro-geometric properties but symplectic properties.	
	\end{remark}

\begin{remark}
We may also ask about the symplectic invariants for the Milnor fibers of such families. In particular, the Fukaya-Seidel category is a symplectic invariant. In our situation, it is an $A_\infty$ category generated by Lefschetz thimbles that are associated to an ordered set of paths that are assigned to a Morsification of $f$. We won't give the definition here but perhaps one thing to point out is that this category is not the ``full'' Fukaya category since there certainly are more exact Lagrangians than just the thimbles. For example, Keating constructed exact Lagrangian tori for the Milnor fibers of singularities with positive modality \cite{keating} which is in contrast to the result of Ritter that the only exact Lagrangians in Milnor fibers for modality zero singularities (ADE type) of real dimension 4 are spheres \cite{ritter}. At any rate, the Lefschetz fibrations for two adjacent isolated singularities with the same Milnor number are deformation equivalent and hence, they have equivalent Fukaya-Seidel categories.
	
\end{remark}
	
	\section{Follow-up Questions Suggested by Theorem \ref{zar}}
	
	In this section, we pose some questions related to the proof of Theorem \ref{zar} and also discuss related notions and examples.
	
	\begin{question}
		If $f,g$ belong in a $\mu$-constant family, do they have graded contactomorphic links? If so, then this would immediately imply the theorem above. But this is either false or so far unknown.
	\end{question}	
		
		\begin{question}
		There is a graded ring structure on $\bigoplus_{k \geq 0} HF^*(\phi^k,+)$ where $\phi^0=\id$ and the product is the pair-of-pants product $HF^*(\phi^k,+)\otimes HF^*(\phi^\ell,+) \to HF^*(\phi^{k+\ell},+)$. Can this structure be utilized to prove algebraic results such as detecting changes of multiplicity or log canonical threshold within a family?
	\end{question}

	\section{Appendix A: Gradings and the Conley-Zehnder Index for Fixed-Point Floer Cohomology}
	
	Much of this reiterates what is found in \cite{McLeanLog} and \cite{SeidelGraded}; we claim no originality here, only wishing to provide a more self-contained treatment. The point of gradings is that we would otherwise only have a relative grading on $HF^*$ (or the $E^1$ page of the spectral sequence); i.e. we would have to make a choice of what counts as $HF^0$.
	
	What we say here is less detailed. Let $(\R^{2n},\omega_{st})$ be the standard symplectic $\R^{2n}$ and $Sp(2n)$ the group of linear symplectomorphisms. Let $p:\widetilde{Sp}(2n) \to Sp(2n)$ denote its universal cover, recalling that $\pi_1(Sp(2n)) = \Z$. If we have a symplectic vector bundle $\pi:(E,\omega_E) \to X$ of rank $2n$, then we can form the symplectic frame bundle $Fr(E) \to V$ whose fiber is isomorphic to $Sp(2n)$. The fiber over $x \in V$ is the group of linear symplectomorphisms between $(\R^{2n},\omega_{st})$ and $(\pi^{-1}(x),\omega_E|_x)$.
	
	\begin{definition}
		A \textbf{grading} on a bundle $\pi:E \to X$ is a principal $\widetilde{Sp}(2n)$-bundle $\widetilde{Fr}(E)\to X$ together with an isomorphism of principal $Sp(2n)$-bundles:
		\[\iota:\widetilde{Fr}(E) \times_{\widetilde{Sp}(2n)} Sp(2n) \cong Fr(E). \]
	\end{definition}
	
	\noindent Recall that $\widetilde{Fr}(E) \times_{\widetilde{Sp}(2n)} Sp(2n)$ is a quotient $\widetilde{Fr}(E) \times Sp(2n)/ \sim$ where $(x \cdot g, h) \sim (x,p(g)h)$. For example, if $g \in p^{-1}(1) \cong \Z$, then the multiplication $p(g)h = h$ and hence $(x\cdot g, h) \sim (x,h)$.
	
	\begin{example}
		A grading of a symplectic manifold $(X,\omega)$ is a grading of $(TX,\omega) \to X$.
	\end{example}
	
	Now suppose we have the following commutative diagram:
	\begin{center}
		\begin{tikzcd}
			(E_1,\omega_1) \arrow{r}{F} \arrow{d}{\pi_2 }& (E_2,\omega_2) \arrow{d}{\pi_2} \\
			X_1 \arrow{r}{f} & X_2
		\end{tikzcd}
	\end{center}
	
	\noindent where $\pi_i:(E_i,\omega_i) \to X_i$ are symplectic vector bundles of rank $2n$ and $F$ is a map which covers a diffeomorphism $f$ and is fiberwise a linear symplectomorphism. Then there is an induced map on frame bundles $Fr(F):Fr(E_1) \to Fr(E_2)$ defined fiberwise as follows. Let $x \in X_1$. Then for an element $A \in Sp(2n)_x$ in the fiber over $x$, we may compose it with $F_x:E_1|_x \to E_2|_{f(x)}$. This gives an element $F_x \circ A \in Sp(2n)_{f(x)}$. So $Fr(F)$ sends $A \mapsto F_x \circ A$.
	
	A grading of $F$ is a map $\widetilde{F}:\widetilde{Fr}(E_1) \to \widetilde{Fr}(E_2)$ which covers $Fr(F):Fr(E_1) \to Fr(E_2)$. Note that if we have a symplectomorphism $\phi:(X_1,\omega_1) \to (X_2,\omega_2)$, then we can take $d\phi:(TX_1,\omega_1) \to (TX_2,\omega_2)$ to be our $F$ which covers $\phi$. So it is in this sense that we say $\phi$ is graded.
	
	We also want to consider gradings of co-oriented contact manifolds. Suppose $(C,\xi)$ is co-oriented; i.e. $TC/\xi \cong \R$ is oriented. Choose a 1-form $\alpha$ such that $\ker \alpha = \xi$ and $\alpha > 0$ on $TC/\xi$. Then $(\xi,d\alpha|_\xi)$ is a symplectic vector bundle which can be given a grading. Up to isotopy, this grading does not depend on the choice of $\alpha$ so we may sensible define this to be a grading for the co-oriented contact manifold $(C,\xi)$. A grading on a co-orientation preserving contactomorphism $\Psi:(C_1,\xi_1) \to (C_2,\xi_2)$ is a grading on $d\Psi|_{\xi_1}:(\xi_1,d\alpha_1) \to (\xi_2,d\alpha_2)$. For more on isotopies, see \cite{McLeanLog}.
	
	Next, suppose that $B \subset C$ is a contact submanifold of codimension 2. The normal bundle $N_C B := TC|_B/TB \to B$ is symplectic and there is a natural bundle isomorphism $N_CB \cong T^\perp B:= \{v \in \xi: d\alpha(v,w) = 0, \forall w \in TB \cap \xi\}$. In this case where $B$ is codim 2, the tuple $(B \subset C, \xi, \Phi)$ is a contact pair and $\Phi: N_CB \to B \times \C$ is an isomorphism. We call this tupple a \textbf{contact pair with normal bundle data}. A grading for such a pair is a grading in the sense above on $C \setminus B$.
	
	A contactomorphism between two such triples
	$(B_1 \subset C_1, \xi_1, \Phi_1) , (B_2 \subset C_2, \xi_2, \Phi_2)$ is a contactomorphism $\Psi: C_1 \to C_2$ sending $B_1$ to $B_2$ so that the composition
	\begin{center}
		\begin{tikzcd}
			N_{C_1} B_1 \arrow{r}{d\Psi|_{B_1}} & N_{C_2} B_2 \arrow{r}{\Phi_2} & B_2 \times \C \arrow{rr}{(\Psi|_{B_1})^{-1} \times \id_\C} & & B_1 \times \C
		\end{tikzcd}
	\end{center}			
	\noindent is homotopic through symplectic bundle trivializations to $\Phi_1$. If we have such a $\Psi$, we may additionally consider a grading on its restriction $\Psi|_{C_1 \setminus B_1}: C_1 \setminus B_1 \to C_2 \setminus B_2$.
	
	The definition for an abstract contact open book $(M,\theta,\phi)$ is in \cite{McLeanLog} and was mentioned in Section \ref{alg}. To briefly recall, $(M,\theta)$ is a Liouville domain and $\phi:M \to M$ is an exact symplectomorphism supported away from the boundary $\partial M$. From an abstract contact open book, we can construct the mapping torus $T(\phi)$ and then obtain a contact open book: $C_\phi := (\partial M \times D(\delta)) \sqcup T(\phi)/ \sim$ where we glue in a small thickening $\partial M \times D(\delta)$ to the mapping torus. As mentioned above, this is the generalized Thurston-Winkelnkemper construction.
	
	We can give $(M,d\theta)$ a grading as a symplectic manifold and also $\phi$ as a symplectomorphism. Suppose that $(M,\theta,\phi)$ is graded. Then this means we have an isomorphism
	\[\iota:\widetilde{Fr}(E) \times_{\widetilde{Sp}(2n)} Sp(2n) \cong Fr(E). \]
	
	\begin{remark} \label{giroux}
		Briefly, we remark that we may then define what it means for graded abstract contact open books to be isotopic as well as what it means for graded contact open books to be isotopic. One can prove that if two graded abstract contact open books are isotopic, then their associated graded contact open books will also be isotopic. This gives us a map
		\[\{ \text{(graded) abstract contact open books}\}/\text{isotopy} \longrightarrow \{\text{(graded) open books}\}/\text{isotopy}\]
		
		\noindent which was shown to be a bijection by Giroux \cite{Giroux}.
	\end{remark}
	
	Returning to gradings, if we identify a fiber $\widetilde{Fr}(TM)_p \cong \widetilde{Sp}(2n)$, then the grading $\widetilde{\phi}:\widetilde{Fr}(TM)_p \to \widetilde{Fr}(TM)_{\phi(p)}$ can be viewed as a map $\widetilde{\phi}: \widetilde{Sp}(2n) \to \widetilde{Sp}(2n)$. Elements of $\widetilde{Sp}(2n)$ are equivalence classes of paths starting at a basepoint where the equivalence is $\alpha \sim \beta$ if and only if $\alpha,\beta$ have the same endpoints and $\alpha * -\beta$ is a contractible loop. Here, the $*$ means concatenation. Then, $\widetilde{\phi}(\id)$ is a path in $Sp(2n)$. The Conley-Zehnder index, which we explain below, assigns a number to any such path so we will take $CZ(\widetilde{\phi}(\id))$ to define the grading.
	
	For any path of symplectic matrices $(A_t)_{t\in [a,b]}$ we can assign an index $CZ(A_t)$ called its Conley-Zehnder index. The Conley-Zehnder index was originally defined only for certain paths of symplectic matrices $A_t$ but now it's been done in general in \cite{RobbinSalamon1} or \cite{Gutt}. We will not define it here but only list some of its properties (see \cite{Gutt}, Prop. 8):
	
	\begin{enumerate}
		\item $CZ((e^{it})_{t\in [0,2\pi]})=2$.
		\item $(-1)^{n-CZ((A_t)_{t\in[0,1]})} = \text{sign} \det_\R(\id-A_1)$ for any path of symplectic matricies $(A_t)_{t\in[0,1]}$.
		\item $CZ(A_t \oplus B_t) = CZ(A_t) + CZ(B_t)$.
		\item  The Conley-Zehnder index of the catenation of two paths is the sum of their Conley-Zehnder indices.
		\item If $A_t$ and $B_t$ are two paths of symplectic matrices which are homotopic relative to
		their endpoints then they have the same Conley-Zehnder index. Also the index only depends on the path up to orientation-preserving reparameterization.
	\end{enumerate}
	
	\section{Appendix B: Examples of $\mu$-Constant Families of Isolated Singularities} \label{mu}

This appendix is meant to serve as a resource for those less familiar with singularities. All three examples are families previously appearing in the literature where the Milnor number is constant but some other invariants are not, showing that the families are nontrivial. The members of the first family are distinguished by a complex-analytic invariant, the second by a topological invariant, and the third by contact geometry.

\begin{example}
	We are grateful to Jason Starr for reminding us of this first example. Consider the union of 4 lines in $\C^2$, say $xy(x-y)(x-ty)=0$ where $t \in \C \setminus \{0,1\}$ This has the same topological type as the union of four lines $x^4-y^4 = (x+y)(x-y)(x+iy)(x-iy)$. The Milnor number is a topological invariant and while it's cumbersome to compute the Milnor number for the family above, we can easily see from $x^4-y^4$ that $\mu = 9$.
	
	Next, if we take the line $x+y=2$ in $\C^2$, we can compute the points of intersection with the 4 lines; they are $(0,2),(1,1),(2,0),(\frac{2t}{t+1},\frac{2}{t+1})$. We can then use this $x+y=2$ line and some change of coordinates to treat these 4 collinear points as $-1,0,1,\frac{t-1}{t+1}$ in $\C$. The cross-ratio of the four points is therefore $\frac{t-1}{t}$ (a minor calculation) and of course, $t \neq 0,1$ from our conditions above since we didn't want the 4 lines to degenerate to 3 lines with one of them having multiplicity (it should be said that $t \neq \infty$ as well).
	
	If we now compactify to $\mathbb{CP}^2$, there is the following theorem of a more classical flavor:
	\begin{theorem}
		Let $E$ be an arbitrary elliptic curve in $\mathbb{CP}^2$ and $p \in E$. Then there are exactly four lines passing through $p$ which are each tangent to $E$. Conversely, given four lines that all pass through a common point, there is a unique elliptic curve lying tangent to the four prescribed lines.
	\end{theorem}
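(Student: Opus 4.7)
The plan is to prove both directions by analysing the degree $2$ projection from $p$.

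For the forward direction, I project linearly from $p$ to obtain a rational map $\pi_p : \mathbb{CP}^2 \dashrightarrow \mathbb{CP}^1$, where the target parameterises the pencil of lines through $p$. After blowing up $p$ in $\mathbb{CP}^2$, the restriction to $E$ becomes a genuine morphism $\pi : E \to \mathbb{CP}^1$; it has degree $2$ because, by Bezout, a generic line through $p$ meets the cubic $E$ at $p$ together with two further points. A ramification point of $\pi$ is precisely a point $q \in E$ at which the line $\overline{pq}$ is tangent to $E$, which is exactly the tangent-line-through-$p$ condition. Riemann--Hurwitz then gives
\[
\chi(E) \;=\; 2\,\chi(\mathbb{CP}^1) - R, \qquad \text{so} \qquad R = 4,
\]
and since a degree two map contributes $e_P - 1 = 1$ at each ramification point, there are exactly four such $q$, yielding four tangent lines through $p$. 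I would also include a short check that, in the non-flex case, the exceptional point $\widetilde p$ lying over $p$ in the blowup is not itself a ramification point, since the tangent line $T_pE$ meets $E$ transversely at one additional point.

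For the converse, given four concurrent lines through a common point $p$, I would view them as four distinct points of the pencil $\mathbb{CP}^1$ and form the double cover of $\mathbb{CP}^1$ branched precisely at these four points. By Riemann--Hurwitz this double cover is a smooth genus one curve $E$, and its abstract isomorphism class is determined by the cross-ratio of the four branch points, equivalently by the $j$-invariant obtained from that cross-ratio. This rigidifies the abstract elliptic curve and its degree two map to the pencil. I would then embed $E$ into $\mathbb{CP}^2$ as a smooth cubic via the complete linear system associated to a suitable degree three divisor, pinned down by requiring that projection from $p$ realises the double cover and that the four ramification fibres map exactly onto the four prescribed lines.

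The hard part is the uniqueness claim in the converse. The naive parameter count on cubics through $p$ tangent to four given lines produces a positive dimensional family in the $\mathbb{P}^9$ of cubics, so uniqueness must be understood modulo the subgroup of $\mathrm{PGL}(3)$ fixing the configuration of $p$ together with the four lines. I would therefore compute this stabiliser, identify the remaining continuous freedom with $\mathrm{Pic}^3(E) \cong E$ parametrising the embeddings of the abstract curve as a plane cubic, and verify that imposing full incidence of the ramification fibres with the prescribed lines removes the residual freedom, so that the embedded elliptic curve is unique up to the full projective stabiliser of the pointed line configuration. The $j$-invariant uniqueness then packages the whole statement as: the cross-ratio of the four concurrent lines determines the elliptic curve outright.
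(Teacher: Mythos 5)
The paper does not actually supply a proof of this theorem; it is stated as background of ``a more classical flavor'' inside Appendix B, so there is no argument in the text to compare against.

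Your forward direction is the standard one and is correct: projection from $p$ gives a degree $2$ morphism $E \to \mathbb{CP}^1$, the branch points are exactly the lines through $p$ tangent to $E$ at a point other than $p$ (with the flex tangent appearing as a ramification point over $p$ when $p$ is a flex), and Riemann--Hurwitz forces $R = 4$. Your side remark about $\tilde p$ not being a ramification point in the non-flex case is the right sanity check, and it also explains the bookkeeping: in the non-flex case $T_pE$ is a fifth tangent line but not a branch line, so ``four'' here means the four branch lines of $\pi_p$.

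The converse is where you are pursuing a claim that is actually false. You correctly build the double cover of $\mathbb{CP}^1$ branched at the four marked lines and observe that its $j$-invariant, hence its abstract isomorphism class, is determined by the cross-ratio; that already \emph{is} the content of the uniqueness statement, and the surrounding paragraph in the paper (tying the cross-ratio to the $j$-invariant of the cone) makes clear that ``unique elliptic curve'' means unique up to abstract isomorphism. The additional claim you set up --- that the embedded plane cubic is unique up to the $\mathrm{PGL}(3)$-stabilizer of the pointed line configuration --- is not true, and the verification you propose would not go through. Concretely: cubics through $p$ tangent to the four lines at points $\neq p$ form (generically) a $\mathbb{P}^4$ inside $\mathbb{P}^9$, and the stabilizer of $(p; L_1, \dots, L_4)$ in $\mathrm{PGL}(3)$ has dimension $3$, leaving a genuine one-parameter residual family. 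That residual parameter is exactly the freedom you identify with $\mathrm{Pic}^3(E) \cong E$ (equivalently, the choice of preimage $p' = \iota^{-1}(p)$ with $\mathcal{O}_E(1) = \tilde D + p'$), and imposing tangency to the four lines does \emph{not} pin it down: all members of that family pass through $p$ and are tangent to the same four lines, yet they are not related by a projectivity fixing the configuration, since the projective automorphism group of a fixed smooth plane cubic is finite (only $3$-torsion translations lift to $\mathrm{PGL}(3)$). So the ``hard part'' you flag is a statement you should drop rather than prove; what survives, and what the theorem is asserting, is precisely the $j$-invariant uniqueness your double-cover construction already delivers.
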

	
	There is also the following result, likely known to Riemann and Weierstrass:
	
	\begin{proposition}
		Every elliptic curve $(E,0)$ embeds into $\mathbb{CP}^2$.
	\end{proposition}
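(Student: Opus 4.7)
The plan is to write down an explicit embedding via the classical Weierstrass construction. By the uniformization theorem, every compact Riemann surface of genus one with a marked base point is biholomorphic to $(\C/\Lambda, [0])$ for some rank-$2$ lattice $\Lambda \subset \C$, so I would first identify $E$ with $\C/\Lambda$ in a way that sends $0 \in E$ to the class of $0 \in \C$.

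Next, I would invoke the Weierstrass $\wp$-function of $\Lambda$,
\[
\wp(z) = \frac{1}{z^2} + \sum_{\omega \in \Lambda\setminus\{0\}}\left(\frac{1}{(z-\omega)^2} - \frac{1}{\omega^2}\right),
\]
a $\Lambda$-periodic meromorphic function with a double pole at $0$, together with its derivative $\wp'$, which is odd, $\Lambda$-periodic, and has a triple pole at $0$. The standard computation yields the cubic relation $(\wp')^2 = 4\wp^3 - g_2 \wp - g_3$, where $g_2, g_3$ are the Eisenstein series of weights $4$ and $6$ attached to $\Lambda$. Because $\Lambda$ is a genuine lattice, the discriminant $g_2^3 - 27 g_3^2$ is nonzero, so the projective cubic $C: Y^2Z = 4X^3 - g_2 XZ^2 - g_3 Z^3 \subset \mathbb{CP}^2$ is smooth. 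I would then define $\phi: \C/\Lambda \to \mathbb{CP}^2$ by $\phi(z) = [\wp(z):\wp'(z):1]$ for $z \neq 0$ and $\phi(0) = [0:1:0]$; a local check at the pole of $\wp$ shows $\phi$ is holomorphic there, and the cubic relation places the image in $C$.

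The hard part will be verifying that $\phi$ is an injective immersion, so that it really gives an embedding (and in fact an isomorphism onto $C$). For injectivity, I would use that $\wp$ has degree two, so $\wp(z_1) = \wp(z_2)$ forces $z_2 \equiv \pm z_1 \pmod{\Lambda}$; then $\wp'$, being odd, separates $z$ from $-z$ except at the three nonzero $2$-torsion points, and those three points are separated by $\wp$ itself since their $\wp$-values are the three distinct roots of $4X^3 - g_2 X - g_3$. A short local-coordinate computation — using $\wp(z) = z^{-2}+O(z^2)$ and $\wp'(z) = -2z^{-3}+O(z)$ near $0$, and analogous Taylor expansions near the $2$-torsion points — shows that $d\phi$ is nowhere vanishing. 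Since $\phi$ is then an injective holomorphic immersion from a compact Riemann surface into $\mathbb{CP}^2$, it is automatically a closed embedding, completing the proof.
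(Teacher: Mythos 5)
Your argument is correct but takes a genuinely different route from the paper's. The paper stays entirely inside algebraic geometry: it applies Riemann--Roch to the line bundles $\mathcal{O}(n\cdot 0)$ (using that $K_E$ is trivial, so $h^0(E,L)=\deg L$ for $\deg L>0$) to manufacture rational functions $x$ and $y$ with poles of order exactly $2$ and $3$ at the marked point, and then observes that $1,x,y,x^2,xy,y^2,x^3$ are seven functions in the $6$-dimensional space $H^0(\mathcal{O}(6\cdot 0))$, forcing a Weierstrass relation; the map $[1:x:y]$ is then the embedding by the very ample linear system $|3\cdot 0|$. You instead go through the transcendental theory: uniformize $E$ as $\C/\Lambda$, build $\wp$ and $\wp'$ explicitly, and map by $[\wp:\wp':1]$, verifying injectivity and immersion by hand. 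Both arrive at the same cubic model. What the paper's route buys is generality (it works verbatim over any algebraically closed field and is the ``right'' proof in the algebraic category) and brevity, at the cost of being sketchy — in particular it never actually checks that the map is injective or an immersion, which is where the real content of ``very ample'' lives. Your route is confined to $\C$ (it needs uniformization, a substantial analytic input), but it is concrete and you actually carry out the injectivity and immersion checks, so in that sense it is the more complete argument. One small caveat: what you call ``the uniformization theorem'' is, strictly, uniformization plus the classification of fixed-point-free automorphism groups of $\C$ — worth flagging, though it is of course standard.
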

	
	Now, intersect the four lines from the theorem with a fifth line (not through the marked point) and let $\lambda$ be the cross ratio of the four intersection points. Then the $j$-invariant of the elliptic curve is \[j = 256 \frac{(\lambda^2-\lambda+1)^3}{\lambda^2(\lambda-1)^2}. \]
	
	The $j$-invariant is an intrinsic analytic invariant for elliptic curves and if $\lambda = \frac{t-1}{t}$, the algebra works out so that $j$-invariant is actually of the same form:
	\[j = 256 \frac{(t^2-t+1)^3}{t^2(t-1)^2}. \]
	
	This means that the analytic type of the singularities in this family are distinguished by their cross-ratio which is in fact, \textit{intrinsic}, due to this reformulation with the $j$-invariant. 
	
	\begin{proof}(Sketch of proposition)
		Let $L \to E$ be a line bundle and $K$ the canonical bundle on $E$. Riemann-Roch says:
		\[h^0(E,L)-h^0(X,L^{-1} \otimes K) = \deg(L) + 1-g.\]
		
		Since $g=1$ and $K$ is trivial for elliptic curves, we see that $h^0(E,L) \geq \deg(L)$. So if we want a section with vanishing of order 2 or 3 near the marked point $0 \in E$, such sections exist. Call these sections $x,y$. We use them to define an embedding into $\mathbb{CP}^2$; first we use $x,y$ as coordinates to show that the elliptic curve satisfies the following Weierstrass normal form equation $y^2=x^3+ax^2+bx+c$ and then we projectivize the equation. Note, by the way, that the $y^2$ term on the left tells us there is an involution on the curve. Of course, this is consistent with the group law on $E$ telling us that we have an involution $x \mapsto -x$.
	\end{proof}
	
	\begin{remark}
		One thing that may be of secondary interest is that picking a point on an elliptic curve is like choosing the identity element for the group law and the tangent lines give us the 2-torsion points (on $\R^2/\Z^2$ for example, there are four 2-torsion points: $(0,0), (0,1/2),(1/2,0)$, and $(1/2,1/2)$). In Weierstrass normal form, the inverse to a point $(x,y)$ on the curve is $(x,-y)$ and a 2-torsion point then has $y = 0$. Hence, we're looking for the three roots of $x^3+ax^2+bx+c$; the fourth 2-torsion point is at infinity and is the identity. In any abelian group, the subset of 2-torsion points forms a subgroup and here, it is $\Z/2 \times \Z/2$. The sum of any two of these points is equal to a third in the group which is obvious from the fact that the three roots are collinear.
	\end{remark}	
	
	Also, the example of $x^4+y^4$ is discussed on the level of directed Fukaya categories by Keating \cite{keating_vid}.
	
	Another example of constant Milnor number and variation of moduli: take a family of elliptic surface singularities such as $x^3+y^3+z^3-txyz=0$, where $t^3 \neq 27$ (without this condition, the singularities are nonisolated). Of course this is related to cross-ratio, since this affine equation in $\C^3$ is just the affine cone over the elliptic curve with the same equation in $\mathbb{CP}^2$.

\end{example}

	\begin{example}
		An \textbf{unfolding} of a singularity defined by any polynomial $f$ can be viewed as a deformation space. For example, the miniversal deformation space is an unfolding. If $S$ is a semi-universal unfolding (I don't know what the semi-universal means but $S$ can be viewed as a ball inside some $\C$-vector space), then $f_s = f + \sum s_j \phi_j$ is a deformation of $f$. For each $s$, we can consider $V(f_s)$, the zero locus of $f_s$. The space $D \subset S$ is comprised of those $s$ such that $V(f_s)$ has singularities. If $H_0 \subset S$ is a generic 2-plane passing through the origin, then $H_0 \cap D$ is a curve called a \textbf{generic discriminant curve}. If $H$ is a parallel 2-plane to $H_0$, then $H \cap D$ is called a \textbf{generic unfolded discriminant curve}.
		
		For plane curves defined by $f(x,y)=0$, we have the Milnor number $\mu$ as well as an analytic invariant $\sigma := 1 + \dim \C[x,y]/I$ where $I$ is the ideal generated by $f$, its 1st partial derivatives, and its 2nd partial derivatives.
		
		\noindent \textbf{Fact:} Suppose $f = y^3 - P(x)y+Q(x)$. Then the topological type of $H_0 \cap D$ is determined by $\mu$ and $\sigma$. This fact implies that the topological type of the unfolded discriminant curve $H \cap D$ only depends on $\mu$ and $\sigma$.
		
		Now for Pham's example, originally appearing in \cite{pham}; another reference is \cite{lonne}. Let $f(x,y)=y^3+x^k+1$ and choose $m \geq 2$ such that $2k \leq 3m-2$ and $m \leq k$. He showed there is a 3-parameter unfolding $F(x,y;u,t,s)$ so that for $u=t=0$, $\mu$ is constant. For a fixed sufficiently small $s_0$, the discriminant curve of $F(x,y,;u,t,s_0)$, denoted $D_{s_0} :=\{s=s_0\} \cap D \subset \C^2_{u,t}$, is reduced (all irreducible components have multiplicity 1) and is topologically equivalent to $D_0$. Observe that our 2-plane $\{s=s_0\}$ is not generic. On the other hand, $\sigma = k$ for $s=0$ and $\sigma = m$ for sufficiently small $s \neq 0$.
		
		Since the topological type of the (unfolded) generic discriminant curve is determined by $\mu$ and $\sigma$, this $F$ gives us examples where $\mu$ is constant but the topological type of the generic discriminant curve changes.
	\end{example}
	
	\begin{example}
		Let $f_p = z_0^p + \sum_{j=1}^{2m+1} z_j^2$ be a weighted homogeneous polynomial with $p \equiv \pm 1 \pmod{8}$ and $m \geq 1$. It's Milnor number is $p-1$. Brieskorn \cite{brieskorn} proved that the link of this polynomial, denoted $\Sigma(p,2,...,2)$ is diffeomorphic to the standard $S^{4m+1}$.
		
		In \cite{ustilovsky}, Ustilovsky showed that for each $p$, we get a contact structure $\xi_p$ on $S^{4m+1}$ and these are all pairwise nonisomorphic, distinguished by contact homology. In fact, for each homotopy class of almost contact structure on $S^{4m+1}$, there are infinitely many pairwise nonisomorphic contact structures.
		
		This is in contrast to the result of Caubel-Nemethi-Popescu-Pampu \cite{cnp} which says that if we fix a smooth, oriented 3-manifold $Y$, then it admits at most one contact structure $\xi$ which is Milnor fillable; i.e. $(Y,\xi)$ is contactomorphic to the link $L$ of some isolated surface singularity. In fact, if we are only interested in $Y=S^3$, then Mumford \cite{mumford} showed that $S^3$ can only arise as the link of a smooth point. He does so by showing how to compute $\pi_1(L)$ given the singularity and that $\pi_1(L) = 0$ if and only if the link is of a smooth point and therefore, must be $S^3$. We remark that Mumford's result can be viewed as a proof of the Poincar\'e conjecture in the restricted setting of links of isolated surface singularities.
	\end{example}

	\addcontentsline{toc}{section}{\numberline{}References}

	\bibliographystyle{alpha}
	\bibliography{ref.bib}
\end{document}